\theoremstyle{plain}
\newtheorem{thm}{Theorem}
\newtheorem*{thm*}{Theorem}
\newtheorem{lem}[thm]{Lemma}
\newtheorem{prop}[thm]{Proposition}
\newtheorem{df-prop}[thm]{Definition-Proposition}
\theoremstyle{definition}
\theoremstyle{remark}
\newtheorem{rem}[thm]{Remark}
\newtheorem{ex}[thm]{Example}
\newcommand{\Hom}{\operatorname{Hom}}
\newcommand{\mc}{\mathcal}
\newcommand{\mf}{\mathfrak}
\newcommand{\C}{\mathbb C}
\newcommand{\oa}{{\bar 0}}
\newcommand{\ob}{{\bar 1}}
\def\gl{\mathfrak{gl}}
\newcommand{\g}{\mathfrak{g}}
\def\la{\lambda}
\def\ov{\overline}
\newcommand{\ch}{\mathrm{ch}}
\newcommand{\h}{\mathfrak{h}}
\newcommand{\N}{\mathbb{Z}_{\geq 0}}
\newcommand{\Z}{{\mathbb Z}}
\def\Mod{\operatorname{-Mod}\nolimits}
\newcommand{\hf}{{\Small \frac12}}
\def\Hom{\operatorname{Hom}\nolimits}
\def\End{\operatorname{End}\nolimits}
\def\Res{\operatorname{Res}\nolimits}
\def\Ind{\operatorname{Ind}\nolimits}
\def\gl{\mathfrak{gl}}
\def\la{\lambda}
\def\ov{\overline}
\newcommand{\ad}{\mathrm{ad}}
\begin{document}
\title[Whittaker Modules and Finite $W$-Algebras of Queer Lie Superalgebras]{Whittaker Modules and Representations of Finite $W$-Algebras of Queer Lie Superalgebras}

\author[Chen]{Chih-Whi Chen} \address{Department of Mathematics, National Central University, Chung-Li, Taiwan 32054 \\ National Center of Theoretical Sciences,
	Taipei, Taiwan 10617} \email{cwchen@math.ncu.edu.tw}

\author[Cheng]{Shun-Jen Cheng}
\address{Institute of Mathematics, Academia Sinica, Taipei, Taiwan 10617} \email{chengsj@math.sinica.edu.tw}

\begin{abstract}
 We study various categories of Whittaker modules over the queer Lie superalgebras $\mf q(n)$. We formulate standard Whittaker modules and reduce the problem of composition factors of these standard Whittaker modules to that of Verma modules in the BGG categories $\mc O$ of $\mf q(n)$. We also obtain an analogue of Losev-Shu-Xiao decomposition for the finite $W$-superalgebras $U(\mf q(n), E)$ of $\mf q(n)$ associated to an odd  nilpotent element $E\in \mf q(n)_\ob$. As an application, we establish several equivalences of categories of Whittaker $\mf q(n)$-modules and analogues of BGG category of $U(\mf q(n), E)$-modules. In particular, we reduce the multiplicity problem of Verma modules over $U(\mf q(n), E)$ to that of the Verma modules in the BGG categories $\mc O$ of $\mf q(n)$.
 \end{abstract}

\maketitle

\tableofcontents



\section{Introduction}

\subsection{Background}
	The representation theory of Lie superalgebras has developed into an important branch in Lie and representation theory. Over the past few decades, there has been significant progress in the study of the Bernstein-Gelfand-Gelfand (BGG) category $\mc O$, which has yielded satisfactory development and broad applications. Significantly, this recent advancement has also established deep connections to related fields and provided a robust framework that serves as a model for the study of more general representations.  

    The investigation into representations beyond the weight modules, often inspired by the structures observed in category $\mc O$, has recently brought non-weight modules to the forefront of research. Among them, {\em Whittaker modules}, originally pioneered by Kostant in \cite{Ko78} in the setup of reductive Lie algebras $\g$, are one of the most  interesting objects of exploration; see also \cite{AB21, B97,MS97, MS14}. These Whittaker $\g$-modules, which we refer to as MMS Whittaker modules \cite{Mc85,MS97} and which we denote by $\mc N(\zeta)$, where $\zeta$ is a nilpotent character of $\g$, are generalizations of the BGG categories, which are recovered as the special case $\zeta = 0$.  In \cite{Ko78}, Kostant essentially treated another extreme case of {\em non-singular} $\zeta$, namely the case when the dimension of the kernel $\ker \zeta$ is minimal and gave a   classification of the simple Whittaker modules in $\mc N(\zeta)$ for this situation.

    A natural extension of this theory is the theory of Whittaker modules for {\em basic classical}, {\em periplectic} and {\em queer} Lie superalgebras from Kac's list \cite{K77}.    	Although considerable progress has been made in the study of Whittaker modules over Lie algebras, the corresponding theory for Lie superalgebras has only recently begun to receive increased attention; see, e.g., \cite{BCW14,C21,CCM23,CC23_2,CC22}. However, within the framework of Lie superalgebras, the existing literature primarily focuses on Whittaker modules over basic classical and periplectic Lie superalgebras, while the study of those associated with the queer series remains less developed.

 Within the same time as the development of Whittaker modules, insights stemming from Kostant’s work \cite{Ko78} led to a substantial interest in, and to a rapid development of representation theory of finite $W$-algebras, which by now, is a significant research area in representation theory. Following the general construction of Premet \cite{Pr02}, a finite $W$-algebra $U(\g, e)$ is an associative superalgebra constructed from a pair $(\g, e)$, where $\g$ is a finite-dimensional Lie superalgebra and $e$ is a (homogeneous) nilpotent element of $\g$; see also \cite{Pr07, PS16, Wa11, Zh14} and reference therein.  The rapid development of this field is partially due to the discovery of connections between the representation theory of such finite $W$-algebras with other areas  such as representations of Lie (super)algebras, the primitive ideals for the universal enveloping algebras, Yangians and affine $W$-algebras etc., see, e.g., \cite{BK06, BK08,Pr07,RS99,Zhu96} and references therein. 
Among these numerous connections, perhaps the most straightforward one is to Whittaker modules over Lie superalgebra, a connection originally proved by Skryabin \cite{Skr} for Lie algebras, known as the {\em Skryabin equivalence}, which establishes an equivalence of the category of Whittaker modules for a Lie superalgebra $\g$ and the category of modules over the finite $W$-algebra $U(\g, e)$; see also \cite{Los10b,Zh14,ZS15}.

  The initial formulation of the generalization of the BGG category $\mathcal{O}$ to finite $W$-algebras associated to semisimple Lie algebras, including the definition of Verma and highest weight modules, was first established and studied in \cite{BGK08}. We shall also call these categories BGK category of finite $W$-algebras. In \cite{Los12}, Losev made a remarkable extension of the Skryabin’s equivalence connecting the BGK category $\mathcal{O}$ of finite $W$-algebras with MMS Whittaker categories.  
  Notably, this latter result, together with \cite{B97,MS14}, implies that the composition multiplicities of Verma modules in the category $\mathcal{O}$ of the finite $W$-algebra of reductive Lie algebras are computed by the Kazhdan–Lusztig polynomials. Such a relationship was originally conjectured in \cite{BGK08}.  To prove this equivalence of categories Losev established a fundamental decomposition theorem on a certain completion $U(\g)^\wedge_{\widetilde{\mf m}}$ with respect to $\widetilde{\mf m}$ in \eqref{eq::23::m} based on Fedosov quantization; see also \cite{Los10b}.
 
 For Lie superalgebras, Shu and Xiao \cite{SX20} developed an algebraic approach to give a super generalization of Losev's construction \cite{Los10b} to the case where $\g$ is a basic classical Lie superalgebra. More recently, the formulation of the category $\mc O$ of finite $W$-algebras in the sense of \cite{BGK08} was subsequently extended to basic Lie superalgebras in \cite[Section 5.2]{CW25}. In this context, Wang and the second author applied the Losev's decomposition, as adapted by Shu–Xiao to Lie superalgebras, to construct equivalences between the corresponding categories of MMS Whittaker $\g$-modules and $U(\g, e)$-modules, for a basic classical Lie superalgebra $\g$.


\subsection{The goal} The main goal of this paper is to extend the theory of Whittaker modules, finite $W$-algebras, and the Losev–Shu–Xiao decomposition from the basic classical case to the setting of queer Lie superalgebras ${\mf q}(n)$. It is worth pointing out that the development of Whittaker module theory and the finite $W$-algebras for queer Lie superalgebras presents distinct structural challenges in comparison to the basic classical Lie superalgebras, which we shall explain as follows.

\subsubsection{MMS Whittaker modules} 
The MMS category $\mc N(\zeta)$ of Whittaker $\g$-modules associated with a nilpotent character $\zeta$  was initially introduced by McDowell \cite{Mc85} and Mili{\v{c}}i{\'c}-Soergel \cite{MS97} in the setup of reductive Lie algebras. In loc.~cit., the most basic among the Whittaker modules over Lie algebras is the so-called {\em standard Whittaker module}, denote by $M(\la,\zeta)$, where $\la$ is a weight. They are parabolically induced modules from Kostant's simple Whittaker modules over a certain Levi subalgebra $\mf l_\zeta$ and can be viewed as generalizations of Verma modules, as $M(\la,0)$ reduces to the Verma module of highest weight $\la$. Furthermore, the simple tops of these modules constitute a complete list of simple objects in the category $\mc N(\zeta)$. Specifically, it was shown in \cite{MS97} that the composition multiplicities for   $M(\lambda, \zeta)$ associated to an integral weight $\lambda$ are determined by Kazhdan–Lusztig polynomials, just
like for Verma modules in category $\mc O$. The corresponding full generalization for all weights was later obtained by Backelin \cite{B97}, where the proof relies on a certain exact functor $\Gamma_{\zeta}$ from  $\mc O$ to $\mc N(\zeta)$. Recent work adapts these constructions and results to basic and periplectic Lie superalgebras $\g$, under the assumption that the $\mf l_\zeta$ is a purely even Levi subalgebra of $\g$; see, e.g., \cite{C21,CC22}. However, in the case of ${\mf q}(n)$, any Levi subalgebra arising from a parabolic decomposition cannot be a purely even. Consequently, Kostant's simple Whittaker modules cannot be parabolically induced to ${\mf q}(n)$-modules.

 In the present paper, we work with the standard triangular decomposition $\mf{g} =\mf n\oplus \mf h\oplus \mf n^-$, defined in \eqref{eq::tri}, of the queer Lie superalgebra $\g= {\mf q}(n)$. We study the MMS type category $\mc N(\zeta)$ of Whittaker modules over $\mf q(n)$ associated with a  character $\zeta$ of $\mf n_\oa$. In particular, we give a formulation of standard Whittaker modules and reduce the problem of classification of simple objects in $\mc N(\zeta)$ to that of the simple objects in the counterpart of the MMS category of $\mf l_\zeta$ associated with a non-singular character of $\mf n_\oa$ (see Theorem \ref{cls::thm}). Furthermore, we achieve a reduction of the composition multiplicity problem of the standard Whittaker modules to that of Verma modules in the category $\mc O$, using an analogue of Backelin functor $\Gamma_\zeta$ for $\mf q(n)$ (see Theorem \ref{eq::compostadWhi}). Moreover, we study the target category $\mc W(\zeta):= \Gamma_\zeta(\mc O)$ of $\Gamma_\zeta$, which serves as a unified framework encompassing  all simple and standard Whittaker modules and is a generalization of the ordinary BGG category $\mc O$, as $\mc O$ coincides with the special case $\mc W(0)$. For this category, we prove a BGG-type reciprocity, generalizing the BGG reciprocity for the ordinary category $\mc O$ of $\mf q(n)$.

\subsubsection{Finite W-algebras}   

In the classical setting of reductive Lie algebras $\g$ and basic classical Lie superalgebras $\g$, the construction of the finite $W$-algebra $U(\mathfrak{g}, e)$ associated with an even nilpotent element $e \in \g_\oa$ relies  on 
an even non-degenerate invariant supersymmetric bilinear form $(\_|\_)$ on $\mathfrak{g}$. The queer Lie superalgebra $\mathfrak{q}(n)$ is known to lack such an even non-degenerate invariant bilinear form. 

In the present paper, we consider the finite $W$-algebras $U(\mf q(n), E)$ for $\mf q(n)$ associated to odd nilpotent elements $E\in \mf q(n)_{\ob}$, which have been studied in \cite{Zh14, PS16}.  Unlike the classical construction, these algebras are defined based on the odd non-degenerate invariant supersymmetric bilinear form inherent to $\mathfrak{q}(n)$. This reliance on the odd bilinear form is a distinguished feature of $\mf q(n)$, thereby setting its $W$-algebra construction apart from the more standard approaches.

In the paper, we study various BGG-type categories $\mc O$ of $U(\mf q(n), E)$-modules, where the construction of these categories is adopted from \cite{BGK08}. In particular, we prove a version of the Losev–Shu–Xiao decomposition for $\mf q(n)$ (see Theorem \ref{thm::Losevdecom}). Using the constructions in \cite{Los12}, we then establish several equivalences of categories of Whittaker $\g$-modules and the categories $\mc O$ of $U(\mf q(n), E)$-modules (see Theorem \ref{thm::equivWhiWmod}).  In particular, the MMS category $\mc N(\zeta)$ is equivalent to the full subcategory of objects of finite length in the category $\mc O$ of $U(\mf q(n), E)$. As a consequence, we reduce the problem of determining the composition multiplicities of Verma modules in the category $\mathcal{O}$ for the finite $W$-superalgebra for $\mf q(n)$   to those of Verma modules in the BGG category of $\mathfrak{q}(n)$.

\subsubsection{}\label{rem::generalization}  It is worth pointing out that most of the constructions and results in Section \ref{sect::2} on Whittaker modules for $\mf q(n)$ admit natural generalizations to the setting of basic classical Lie superalgebras $\g$ from Kac's list \cite{K77}. In such cases, the necessary ingredients required for some of these results can be obtained from the established results in \cite{CC23_2} and \cite{CC22}. We observe that in the series of articles \cite{C21,CCC21,CCM23,CC23_2,CC22}, the purely even subalgebra $\mf l_\zeta$ was always assumed to be a Levi subalgebra of $\g$. Consequently, the results presented in Section \ref{sect::2} regarding Whittaker modules can be adapted to analogous ones for basic classical Lie superalgebras, thus completing the remaining general cases. In particular, for an arbitrary basic classical Lie superalgebra, we can analogously define standard Whittaker modules in full generality and reduce the problem of computing  their composition multiplicities to that of Verma modules. This also recovers \cite[Theorem 1]{CC22}, which was stated in loc.~cit.~under the condition that the Lie superalgebra is basic classical and the  a Levi subalgebra of $\g$.

\subsection{Organization} This paper is organized as follows. In Section \ref{sect::pre} we provide the necessary preliminaries. In particular, we introduce various equivalent abelian categories, including the category of projectively presentable modules in $\mc O$, and establish the BGG-type reciprocity for these categories.  These results will be used in establishing the BGG-type reciprocity for $\mc W(\zeta)$ in the subsequent sections.  

In Section \ref{sect::2}, we study the MMS Whittaker category $\mc N(\zeta)$ for the queer Lie superalgebra. The reduction of the classification problem for simple objects in $\mathcal{N}(\zeta)$ is presented in Subsection~\ref{sect::clsofsimpleWhi}. We formulate in Subsection \ref{sect::bac} the standard Whittaker modules $M(\la,\zeta)$ and establish some properties of the Backelin functor $\Gamma_\zeta$ that we need for solving the multiplicity problem of $M(\la,\zeta)$. Subsection \ref{sec::CatW} is devoted to the study of the Whittaker category $\mc W(\zeta)$.

In Section \ref{sect::Walg}, we focus on the study of finite $W$-algebras of $\mf q(n)$.  We develop the Losev–Shu–Xiao decomposition in the queer Lie superalgebra setting, and establish several equivalences of categories of Whittaker $\mf q(n)$-modules and $U(\mf q(n), e)$-modules.

\vskip0.3cm
{\bf Acknowledgment.}   The authors are partially supported by National Science and Technology Council grants of the R.O.C., and they further acknowledge support from the National Center for Theoretical
Sciences. The second author thanks Weiqiang Wang for interesting discussion.

\vskip0.3cm
 \section{Preliminaries} \label{sect::pre}
In this section, we review the necessary background and definitions that will be used in the remainder
of the paper.

\subsection{The queer Lie superalgebra $\mf {q}(n)$} \label{sect::pre::21}
 Throughout, $n$ denotes a fixed positive integer.  Let   $\gl(n|n)$ be the general linear Lie superalgebra of linear transformations on the complex superspace  $\C^{n|n}$ of superdimension $(n|n)$.  We may realize $\gl(n|n)$ as $2n\times 2n$ complex matrices of the form \[ \mf{gl}(n|n)= \left\{ \left( \begin{array}{cc} A & B\\	C & D\\ \end{array} \right) \| ~A, B, C, D \in \C^{n\times n} \right\}. \]

The \emph{queer Lie superalgebra} $\g := \mf q(n)$ is the subalgebra of $\mf gl(n|n)$ consisting of matrices of the form: 
\begin{align}
\mf g:=
\mf q(n)=
\left\{ \left( \begin{array}{cc} A & B\\
	B & A\\
\end{array} \right) \| ~A,B\in \C^{n\times n} \right\} \label{eq::qnelts}
\end{align}  
The even part $\g_\oa$ is isormophic to  $\gl(n)$, and the odd part $\g_\ob$ is isomorphic to $\g_\oa$ as an adjoint $\g_\oa$-module. The center $\mf z(\g)$ of $\g$ is spanned by the identity matrix $\text{I}_{2n}\in \mf \g_\oa$.  For $1\leq i, j \leq n$, we write $e_{ij}$ and $f_{ij}$ for the respective elementary matrices in $\g_\oa$ and $\g_\ob$ with $1$ on the $(i,j)$-entry and $0$ otherwise. We denote by $\Pi: \g\rightarrow \g$ the parity-reversing map that interchanges $e_{ij}$ and $f_{ij}$, for all $1\leq i,j\leq n$.



Let $\mf b$ be the standard Borel subalgebra of $\g$, which consists of matrices of the form \eqref{eq::qnelts} with $A$ and $B$ upper triangular. We let  \begin{align} 	&\g =\mf n\oplus \mf h\oplus \mf n^- \label{eq::tri} \end{align}   be the corresponding triangular decomposition, where  $\mf h$ is the standard Cartan subalgebra of $\g$ and $\mf n$ is the nilradical of $\mf b$. The sets $\{h_i:=e_{ii}|~1\leq i\leq n\}$ and $\{\ov h_i:=f_{ii}|~1\leq i\leq n\}$ form bases for $\h_\oa$ and $\h_\ob$, respectively. Let $\{\epsilon_i|~1\leq i\leq n\}$ be the basis for $\h^*_\oa$ dual to $\{h_i|~1\leq i\leq n\}$. We define a symmetric bilinear form $(\_,\_)$ on $\h_\oa^\ast$ by $(\epsilon_i,\epsilon_j)=\delta_{ij}$, for $1\leq i,j\leq n$.  

Let $\Phi$, $\Phi^+$ and $\Phi^-$  be the set of all, positive and negative roots of $\g$ with respect to $\h_\oa$, respectively. We denote by $\Phi_\oa$ and $\Phi_\ob$ the set of even and odd roots, respectively. Then   $\Phi^-=-\Phi^+$ and $\Phi^+ = \Phi^+_\oa \sqcup \Phi^+_\ob$, where $\Phi^+_\oa =  \Phi^+_\ob =\{\epsilon_i-\epsilon_j|~1\leq i< j\leq n\}$ ignoring the parity. We have the corresponding root space decomposition $\g = \bigoplus_{\alpha \in \Phi\cup \{0\}}\g^\alpha$, where $\g^\alpha$ denotes the root space for $\alpha \in \Phi$. Furthermore, for any  $\h_\oa$-submodule $V$ of $\g$, we let  $\Phi(V)$ denote the set of weights appearing in $V$.

We consider the BGG category  $\mc O = \mc O(\g,\mf n)$    of finitely generated $\mf h_\oa$-semisimple $\g$-modules on which the action of $U(\mf n)$ is locally finite. This category $\mc O$ can be alternatively defined as the category of all $\g$-modules that restrict to the classical BGG category $\mc O(\mf \g_\oa)$ of $\g_\oa$-modules for the triangular decomposition $\g_\oa =\mf n_\oa \oplus \mf h_\oa\oplus {\mf n}^-_\oa$.  

The Weyl group $W$ is defined as the Weyl group of $\g_\oa$ and acts naturally on $\h_\oa^\ast$. Also, we consider the dot-action of $W$ on $\h^\ast_\oa$:
\[w\cdot\la = w(\la+\rho_\oa)-\rho_\oa,\text{ for any }w\in W,~\la\in \h_\oa^\ast,\] where $\rho_\oa$ denotes the half-sum of all roots in $\Phi^+_\oa$.   We define the partial order $\leq $ on $\mf h_\oa^\ast$ as the transitive closure of the   relations
$${\la \leq \la+\alpha, ~ \text{~for } \alpha \in \Phi^+.}$$
For $\la\in \mf h_\oa^\ast$, let $\mf u(\la)$ be the (finite-dimensional) irreducible $\mf h$-module of weight $\la$
(see also \cite[Section 1.5.4]{CW12}). We let  $M(\la): =\Ind_{\mf b}^{\mf g} \mf u(\la)$ be the Verma module of highest weight $\la$, where $\mf u(\la)$  is extended to a $\mathfrak{b}$-module by letting $\mathfrak{n}$ act trivially. Let $L(\la)$  be the unique irreducible quotient of $M(\la)$ and denote by $P(\la)$ the projective cover of $L(\la)$ in $\mc O$.

Let $\Pi_\oa$ be the simple system of $\Phi^+_\oa$.  Denote by $\ch \mf n_\oa :=\{\zeta \in \mf n_\oa^\ast|~\zeta([\mf n_\oa, \mf n_\oa])=0 \}$ the set of characters of $\mf n_\oa$.  Associated to a character   $\zeta\in \ch \mf n_\oa$, we define a  Levi subalgebra $\mf l_\zeta$ of $\g_\oa$:  $$ \mf l_\zeta:= \langle \mf h_\oa, \mf g^{\pm\alpha}_\oa|~\alpha \in \Phi^+\text{ such that }\zeta(\g_\oa^\alpha)\neq 0\rangle,$$  with the corresponding simple system $\Pi_\zeta\subseteq \Pi_\oa$.  Let $W_{\mf l_\zeta}$ be the Weyl group  of $\mf l_\zeta$.

  We let $\Lambda \subseteq \h^\ast_\oa$ denote the set of integral weights of $\g_{\bar{0}}$, which coincides with the set of weights appearing in finite-dimensional weight $\g$-modules.   For $\alpha\in \Pi_\oa$, a   module  over $\g$ is said to be $\alpha$-{\em free} if every non-zero vector $f_\alpha\in \g^{-\alpha}$ acts freely on it.

For any Lie superalgebra $\mf s= \mf s_\oa\oplus \mf s_\ob$, we let $U(\mf s)$ denote its universal enveloping algebra. We denote by $\mf s\Mod$ the category of all $\mf s$-modules. For a subalgebra $\mf a\subseteq \mf s$,  we have the usual restriction functor $\Res^{\mf s}_{\mf a}(\_)$ from $\mf s\Mod$ to $\mf a\Mod$.
 It has a left adjoint functor $\Ind_{\mf a}^{\mf s}(\_):=U(\mf s)\otimes\_: \mf a\Mod\rightarrow \mf s\Mod$.
In particular, we define the shorthand notation $\Ind(\_):=\Ind^{\g}_{\mf g_\oa}(\_): \g_\oa\Mod\rightarrow \g\Mod$ and $\Res(\_): = \Res_{\g_\oa}^{\mf g}(\_):  \g\Mod\rightarrow \g_\oa\Mod$. 

  \subsection{Projectively presentable modules}
   Throughout this paper, we fix a dominant
   weight $\nu \in \h^\ast_\oa$ such that  its stabilizer   under the dot-action of $W$ is $W_{\mf l_\zeta}$. Let 
 \[\Lambda(\nu):=\{\la \in \nu+\Lambda|~L(\la) \text{ is $\alpha$-free, }\forall \alpha\in \Pi_\zeta\}.\] We emphasize that a weight in  $\Lambda(\nu)$ is not necessarily an anti-dominant $\mf l_\zeta$-weight. We refer to  \cite[Subsection 4.3.2]{CCC21} for a classification of $\alpha$-free simple highest weight modules over basic classical and strange Lie superalgebras.
 
   The following lemma is proved in the exact same way as \cite[Theorem 4.17]{CCC21}.
 \begin{lem} \label{lem::afree}
 Let $\la  = \la_1\epsilon_1+\la_2\epsilon_2+\cdots +\la_n\epsilon_n\in \nu+\Lambda$, where $\la_1,\ldots, \la_n\in \C$. Then $\la \in \Lambda(\nu)$ if and only if the following conditions hold:
 \begin{enumerate}
     \item[(i)] $(\la,\alpha)\not\in \Z_{>0}$, for any $\alpha\in \Pi_\zeta$; 
      \item[(ii)] If $\la_i=\la_{i+1}$, for some $1\leq i<n$, then $\la_i$= 0.
 \end{enumerate} 
 \end{lem}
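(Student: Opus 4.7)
The plan is to reduce the question to a rank-one computation and invoke the queer analogue of the $\mf{sl}_2$-argument, following the strategy of \cite[Theorem 4.17]{CCC21} for strange Lie superalgebras. For each $\alpha = \epsilon_i - \epsilon_{i+1} \in \Pi_\zeta$, I would first restrict to the subalgebra $\g[\alpha] \subset \g$ generated by $\mf h$ together with the (two-dimensional) root spaces $\g^{\pm\alpha}$. This subalgebra splits as the direct sum of a copy of $\mf q(2)$ (on the indices $i, i+1$) and a residual Cartan piece from the remaining indices, which commutes with $e_{-\alpha}$. Consequently, the freeness of the action of $e_{-\alpha}$ on $L(\la)$ reduces to the analogous freeness statement for an appropriate $\mf q(2)$-component, with the relevant highest weight being $\mu := \la_i \epsilon_i + \la_{i+1} \epsilon_{i+1}$.

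The question is thereby reduced to showing: the irreducible $\mf q(2)$-module $L_{\mf q(2)}(\mu_1\epsilon_1 + \mu_2\epsilon_2)$ is $\alpha$-free if and only if (a) $\mu_1 - \mu_2 \notin \Z_{>0}$ and (b) $\mu_1 = \mu_2$ implies $\mu_1 = 0$. For the necessity direction I would construct singular vectors in the Verma module $M_{\mf q(2)}(\mu)$. If $\mu_1 - \mu_2 = m \in \Z_{>0}$, a standard $\mf{sl}_2$-type argument (adapted to $\mf q(2)$) produces a singular vector at weight $\mu - (m+1)\alpha$, forcing $L_{\mf q(2)}(\mu)$ to be a proper quotient of the Verma module and yielding a nontrivial kernel for $e_{-\alpha}$. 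If $\mu_1 = \mu_2 \neq 0$, the highest weight space $\mf u(\mu)$ is a genuinely two-dimensional irreducible Clifford module; exploiting the relation $[e_\alpha, f_{-\alpha}] = \ov h_i - \ov h_{i+1}$ together with the non-vanishing of $\ov h_i - \ov h_{i+1}$ on $\mf u(\mu)$, one constructs a mixed singular vector in $M_{\mf q(2)}(\mu)$ involving both $e_{-\alpha}$ and $f_{-\alpha}$ that obstructs the free action of $e_{-\alpha}$ on $L_{\mf q(2)}(\mu)$.

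For the sufficiency direction, under hypotheses (a) and (b), one verifies that no obstructing singular vectors appear in $M_{\mf q(2)}(\mu)$ at weights $\mu - k\alpha$ for $k \geq 1$. Combined with PBW and the weight-graded structure of $L(\la)$, this implies that the relevant $\mf{sl}_2$-strings of $L(\la)$ in the $\alpha$-direction are infinite-dimensional, so $e_{-\alpha}$ acts freely. The main obstacle is the construction of the mixed singular vector in the case $\mu_1 = \mu_2 \neq 0$: it requires careful tracking of the Clifford structure on $\mf u(\mu)$ and of the interplay between the even and odd negative root vectors whose weights coincide. This computation is precisely what drives the argument in \cite[Theorem 4.17]{CCC21}, and it carries over to $\mf q(n)$ with essentially verbatim adaptation.
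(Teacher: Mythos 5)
Your proposal correctly identifies the paper's route: the authors simply cite \cite[Theorem 4.17]{CCC21}, and that argument is indeed a reduction to a rank-one $\mathfrak q(2)$ statement via restriction to the root subalgebra $\mathfrak g[\alpha]$, followed by a singular-vector analysis in $M_{\mathfrak q(2)}(\mu)$. That overall structure is what the paper intends, so on the level of strategy the proposal and the paper's proof agree.

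However, the mechanism you describe for the key atypical case appears to be reversed. You assert that when $\mu_1=\mu_2\neq 0$, the \emph{non-vanishing} of $\ov h_i - \ov h_{i+1}$ on $\mathfrak u(\mu)$ lets one build a mixed singular vector $w = a\,e_{-\alpha}\otimes u_1 + b\,f_{-\alpha}\otimes u_2$ at weight $\mu-\alpha$. But the Clifford relations give $(\ov h_i - \ov h_{i+1})^2 = \mu_i + \mu_{i+1}$ on $\mathfrak u(\mu)$, so when $\mu_1=\mu_2=c\neq 0$ the operator $\ov h_i - \ov h_{i+1}$ is actually \emph{invertible}. Writing out the two conditions $e_\alpha w = 0$ and $f_\alpha w = 0$ (using $[e_\alpha,e_{-\alpha}]=h_i-h_{i+1}$, $[e_\alpha,f_{-\alpha}]=[f_\alpha,e_{-\alpha}]=\ov h_i - \ov h_{i+1}$, and $\{f_\alpha,f_{-\alpha}\}=h_i+h_{i+1}$), one finds the system forces $(\ov h_i - \ov h_{i+1})u_2 = 0$ and $(\ov h_i - \ov h_{i+1})u_1 + (\mu_1+\mu_2)u_2 = 0$, which under invertibility of $\ov h_i-\ov h_{i+1}$ has only the trivial solution. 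Thus invertibility \emph{kills} the would-be singular vector rather than producing it; the genuinely solvable situation is $\mu_i+\mu_{i+1}=0$, where $\ov h_i-\ov h_{i+1}$ becomes nilpotent (square zero) and nonzero $u_1,u_2 \in \ker(\ov h_i - \ov h_{i+1})$ do exist. You should double-check which regime of $(\mu_i,\mu_{i+1})$ CCC21's Theorem 4.17 actually isolates, and rewrite the obstruction step so that the invertibility/nilpotency of $\ov h_i - \ov h_{i+1}$ is applied in the correct direction; as written, the claimed singular-vector construction does not close.

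One smaller point: condition (ii) in the lemma is stated to range over all $1\le i<n$, while your rank-one reduction only addresses indices with $\epsilon_i-\epsilon_{i+1}\in\Pi_\zeta$ (which is all that the definition of $\Lambda(\nu)$ requires). If the lemma really intends all indices, you need an additional sentence explaining why no constraint arises for $i$ outside $\Pi_\zeta$; if it implicitly means only the $\Pi_\zeta$ indices, your reduction is complete but you should say so explicitly.
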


 Let $\mc O_{\nu+\Lambda}$ denote the full subcategory of $\mc O$ consisting of all modules in $\mc O$ whose support belongs to $\nu+\Lambda$. A projective module in $\mc O_{\nu+\Lambda}$ is called $\zeta$-admissible if it is a direct sum of projective covers of simple modules of highest  weights in $\Lambda(\nu)$. We define the {\em cokernel} subcategory $\mc O^{\nu\text{-pres}}$ of $\mc O_{\nu+\Lambda}$ to be the full subcategory consisting of all modules $M$  that  admits a two-step presentation of the form
  $P\rightarrow Q\rightarrow M \rightarrow 0,$
  where $P$ and $Q$ are $\zeta$-admissible projective modules in $\mc O_{\nu+\Lambda}$.

  Let $\mc I_\zeta(\nu)$ be the Serre subcategory of $\mc O_{\nu+\Lambda}$ generated by simple objects $L(\la)$ with $\la \in (\nu+\Lambda) \backslash \Lambda(\nu)$. Consider the Serre quotient category $\ov{\mc O}^\nu:=\mc O_{\nu+\Lambda}/\mc I_\zeta(\nu)$ of $\mc O_{\nu+\Lambda}$ by $\mc I_\zeta(\nu)$ in the sense of \cite[Chapter III]{Ga62}, and let  $\pi: \mc O_{\nu+\Lambda}\rightarrow \ov{\mc O}^\nu$ be the associated Serre quotient functor. The following lemma is  a generalization of \cite[Lemma 12]{CCM23} to $\mf q(n)$ with the same proof.

   \begin{lem}
    The functor $\pi$ gives rise to an equivalence from $\mc O^{\nu\text{-pres}}$ to $\ov{\mc O}^\nu$. In particular, $\mc O^{\nu\text{-pres}}$ inherits an abelian category structure, and $\{\pi(L(\la))|~\la\in \Lambda(\nu)\}$ constitutes a complete set of simple objects in $\ov{\mc O}^\nu$. Moreover, $\pi(P(\la))$ is the indecomposable projective cover of $\pi(L(\la))$ in $\ov{\mc O}^\nu$, for any $\la \in \Lambda(\nu)$.
   \end{lem}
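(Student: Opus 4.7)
The plan is to imitate the argument of \cite[Lemma 12]{CCM23} verbatim; all the ingredients used there---existence of projective covers $P(\la)$ in $\mc O_{\nu+\Lambda}$, the classification of its simple objects by highest weights, and Lemma~\ref{lem::afree} parametrizing $\Lambda(\nu)$---are available in the queer setting, so no new input is required. I would proceed in three steps.

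First, I would identify the simple objects of $\ov{\mc O}^\nu$. By Gabriel's general theory of Serre quotients \cite[Chapter III]{Ga62}, the functor $\pi$ induces a bijection between the isomorphism classes of simples of $\mc O_{\nu+\Lambda}$ not lying in $\mc I_\zeta(\nu)$ and the isomorphism classes of simples of $\ov{\mc O}^\nu$. By the very definition of $\mc I_\zeta(\nu)$, the former set is exactly $\{L(\la) \mid \la \in \Lambda(\nu)\}$, giving the claimed classification.

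Next, I would prove that for each $\la \in \Lambda(\nu)$ the object $\pi(P(\la))$ is the indecomposable projective cover of $\pi(L(\la))$ in $\ov{\mc O}^\nu$. The key observation is the vanishing $\Hom_{\mc O}(P(\la),Y)=0$ for every $Y \in \mc I_\zeta(\nu)$: any nonzero map $P(\la)\to Y$ would produce a nonzero quotient of $P(\la)$ inside $Y$ whose head is $L(\la)$, contradicting the fact that $L(\la)$ is not a composition factor of any object of $\mc I_\zeta(\nu)$. Combining this with the projectivity of $P(\la)$ in $\mc O_{\nu+\Lambda}$, a direct computation with Gabriel's direct-limit description of morphisms in a Serre quotient yields the Hom-identity
\[
\Hom_{\ov{\mc O}^\nu}(\pi(P(\la)), \pi(M)) \;\cong\; \Hom_{\mc O}(P(\la), M)
\]
for every $M \in \mc O_{\nu+\Lambda}$. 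From this identity, $\pi(P(\la))$ inherits both projectivity and the local endomorphism ring of $P(\la)$, so it is an indecomposable projective in $\ov{\mc O}^\nu$; applying $\pi$ to the surjection $P(\la) \tto L(\la)$ then shows that it covers $\pi(L(\la))$.

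Finally, I would deduce the equivalence. For essential surjectivity, any $N \in \ov{\mc O}^\nu$ admits a projective presentation $\pi(Q) \to \pi(P) \to N \to 0$ with $P,Q$ $\zeta$-admissible (by the previous step); the Hom-identity lifts the morphism $\pi(Q)\to\pi(P)$ to a map $Q \to P$ in $\mc O_{\nu+\Lambda}$, and by right-exactness of $\pi$ its cokernel $M$ lies in $\mc O^{\nu\text{-pres}}$ with $\pi(M) \cong N$. For full faithfulness, given $M,M'\in\mc O^{\nu\text{-pres}}$, one takes a $\zeta$-admissible projective presentation of $M$ and applies the five-lemma to the diagram obtained from the Hom-identity. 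The main obstacle is the Hom-identity above, since all subsequent arguments are formal consequences of it; its verification reduces to the vanishing $\Hom_{\mc O}(P(\la), \mc I_\zeta(\nu))=0$ together with Gabriel's direct-limit description of morphisms in a quotient abelian category, and no additional queer-specific input is needed.
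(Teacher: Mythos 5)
Your proposal is correct and takes the same route the paper intends: the paper itself gives no proof, stating only that the lemma "is a generalization of [CCM23, Lemma 12] to $\mf q(n)$ with the same proof," and your argument is precisely that proof written out. The key steps — vanishing of $\Hom_{\mc O}(P(\la), Y)$ for $Y \in \mc I_\zeta(\nu)$ (since any nonzero image would have top $L(\la)$, a factor excluded from $\mc I_\zeta(\nu)$), the resulting isomorphism $\Hom_{\ov{\mc O}^\nu}(\pi P(\la), \pi M) \cong \Hom_{\mc O}(P(\la), M)$, Gabriel's classification of simples in the Serre quotient, and the two-step-presentation argument with the five lemma for the equivalence — all go through in the queer case without modification, since $\mc O_{\nu+\Lambda}$ remains a finite-length category with projective covers and the type-$\texttt{Q}$ phenomenon does not affect any of these formal arguments.
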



 \subsection{BGG reciprocity} \label{sect::22}
   In this subsection, we fix a nil-character $\zeta\in \ch \mf n_\oa$ and establish  a BGG-type reciprocity for the category $\mc O^{\nu\text{-pres}}$. The same results were proved in \cite[Section 5]{CCM23} and \cite[Subsection 4.3]{CC22} under the assumption that $\mf l_\zeta$ is a Levi subalgebra of $\g$.
 \subsubsection{Structural modules in $\mc O^{\nu\text{-pres}}$} \label{sect::232}
We pick an operator $H\in\h_{\oa}$ that determines   the associated  parabolic  decomposition~of~$\g$ in the sense of \cite{CCC21}:  \begin{align}
	&\g=\mf u\oplus \mf l \oplus \mf u^-, \label{para::dec}
\end{align}
such that $\mf l_\oa=\mf l_\zeta$ and $\mf u\subseteq \mf n$, where the subalgebras $\mf u^-$, $\mf l$ and $\mf u$ are given by
 \begin{equation*}\label{deflu}\mf l:=\bigoplus_{\alpha(H)=0} \mf g^\alpha,\quad \mf u:=\bigoplus_{\alpha(H)>0} \mf g^\alpha, \quad \mf u^-:=\bigoplus_{\alpha(H)<0} \mf g^\alpha,\end{equation*}  respectively.  We may note that the Levi subalgebra $\mf l$ is isomorphic to a direct sum of queer Lie superalgebras.
 The   subalgebra $\mf p:= \mf l\oplus\mf u$ is referred to as the  parabolic subalgebra.
 	We emphasize that, in the present paper, we do not assume the condition that $\mf l$ is  purely even, which was imposed in the previous papers \cite{CCM23, CC23_2, CC22}. 

 Consider the BGG category $\mc O(\mf l)$ of $\mf l$-modules with respect to the triangular decomposition $\mf l =(\mf l\cap \mf n)\oplus \mf h \oplus (\mf l \cap \mf n^-)$. Let $L_{\mf l}(\la)$ be the simple object in $\mc O(\mf l)$ of highest weight $\la \in \mf h^\ast_\oa$ and $P_{\mf l}(\la)$ its projective cover. It follows by the Irving-type description of projective-injective modules (see, e.g., \cite[Section 4]{CCC21}),  $P(\la)$ is $\zeta$-admissible if and only if $P_{\mf l}(\la)$ is injective in $\mc O(\mf l)$. 
 
 For any $\mf l$-module $V$,  we define the   parabolically induced $\g$-module  \begin{align}
 	&M^{\mf p}(V):= U(\g)\otimes_{U(\mf p)} V, \label{def::paraind}
 \end{align}  by letting $\mf uV=0.$ The parabolic induction functor $M^{\mf p}(\_): \mf l\Mod\rightarrow \g\Mod$ restricts to a well-defined functor from $\mc O(\mf l) $ to $\mc O$.

 For $\la \in \Lambda(\nu)$, we define the following $\mf g$-modules
 \begin{align}\label{eq::stad}
 	&\Delta(\la): = \Ind_{\mf p}^\g P_{\mf l}(\la),~M^{\mf p}(\la): = \Ind_{\mf p}^\g L_{\mf l}(\la).
 \end{align}

Here, as usual, $P_{\mf l}(\la)$ and $L_{\mf l}(\la)$ are regarded as $\mf p$-modules by letting $\mf u$ act trivially on them. We may note that $\Delta(\la)$ lies in $\mc O^{\nu\text{-pres}}$ by {\cite[Proposition 7]{CC22}}.  The modules $\Delta(\la)$ may be called {\em standard objects} in $\mc O^{\nu\text{-pres}}$ by analogy with the
 standard objects in a properly stratified category of projectively presentable modules in $\mc O$ for other Lie superalgebras; see also \cite[Section 5]{CCM23}. It is worth pointing out that, when $\zeta=0$ (i.e., $W\cdot \nu=\nu$), it was shown that $\mc O^{\nu\text{-pres}} =\mc O_\Lambda$ with standard objects $\Delta(\la)$ and proper standard objects $M^{\mf p}(\la)$ is a properly  stratified category in the sense of \cite{Dl00, MSt04}; see, e.g., \cite{Fr07}.
 \begin{prop} \label{prop::proj} For $\la \in \Lambda(\nu)$, there is a short exact sequence
 	\[0\rightarrow K\rightarrow P(\la) \rightarrow \Delta(\la) \rightarrow 0,\]
 	where $K$ has a filtration, subquotients of which are isomorphic to  $\Delta(\mu)$ with $\mu >\la$.
 \end{prop}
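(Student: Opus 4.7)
The plan has two stages: first, construct a surjection $\varphi: P(\la) \twoheadrightarrow \Delta(\la)$; second, extract a $\Delta$-filtration on $\ker\varphi$.

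For the surjection, applying $\Ind_{\mf p}^\g$ to the canonical cover $P_{\mf l}(\la) \twoheadrightarrow L_{\mf l}(\la)$ in $\mc O(\mf l)$ gives $\Delta(\la) \twoheadrightarrow M^{\mf p}(\la)$. Since $M^{\mf p}(\la)$ is a parabolic Verma-type module with unique simple quotient $L(\la)$, composition yields $\Delta(\la) \twoheadrightarrow L(\la)$. By projectivity of $P(\la)$ in $\mc O$, the cover $P(\la) \twoheadrightarrow L(\la)$ then lifts through $\Delta(\la)$, producing $\varphi$. Surjectivity of $\varphi$ follows from checking that $\Delta(\la)$ admits $L(\la)$ as its unique simple top: by Frobenius reciprocity, $\Hom_\g(\Delta(\la), L(\mu)) = \Hom_{\mf l}(P_{\mf l}(\la), L(\mu)^{\mf u})$, and a nonzero element here forces the weight-$\la$ constraint $\mu = \la$ after examining the $\mf l$-top of $L(\mu)^{\mf u}$ and using that $\la \in \Lambda(\nu)$.

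For the kernel analysis, I would prove the stronger statement that $P(\la)$ itself admits a $\Delta$-filtration with $\Delta(\la)$ appearing exactly once at the top; restricting to $\ker\varphi$ then supplies the required $\Delta(\mu)$-subquotients with $\mu > \la$. The argument compares Verma flags: the standard BGG Verma flag of $P(\la)$ has subquotients $M(\mu)$ for $\mu \ge \la$ with $M(\la)$ at the top, while each $\Delta(\mu) = \Ind_{\mf p}^\g P_{\mf l}(\mu)$ inherits from the Verma flag of $P_{\mf l}(\mu)$ in $\mc O(\mf l)$ a Verma flag with top $M(\mu)$ and remaining subquotients $M(\nu)$ for $\nu > \mu$ in the $\mf l$-order (which refines the $\g$-order). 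Working by downward induction on the weight poset, I would peel off $\Delta(\mu)$-subquotients: at each step, for a maximal remaining weight $\mu$, the adjunction $\Hom_\g(\Ind_{\mf p}^\g P_{\mf l}(\mu), -) = \Hom_{\mf p}(P_{\mf l}(\mu), -)$ together with the projectivity of $P_{\mf l}(\mu)$ in $\mc O(\mf l)$ yields a morphism $\Delta(\mu) \to P(\la)$ whose image is the sought $\Delta(\mu)$-subquotient.

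The principal obstacle is verifying that the Verma subquotients of $P(\la)$ genuinely regroup into copies of $\Delta(\mu)$, rather than into more complex extensions of Vermas. The decisive input is the projectivity of each $P_{\mf l}(\mu)$ in $\mc O(\mf l)$, which via the adjunction above produces enough lifts at each inductive step to assemble full $\Delta(\mu)$-subobjects inside $P(\la)$, thereby realizing the desired filtration.
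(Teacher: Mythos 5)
Your first step—constructing a surjection $P(\la)\twoheadrightarrow\Delta(\la)$ by lifting through $\Delta(\la)\twoheadrightarrow L(\la)$—is fine and is implicit in the paper's argument. The substance of the proposition, however, is the existence of the $\Delta$-flag on the kernel, and this is where your proposal has a genuine gap.

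Your ``peeling-off'' argument is not a proof. Given a maximal remaining weight $\mu$, the adjunction produces a nonzero map $\Delta(\mu)\to P(\la)$ (or to the appropriate quotient), but nothing you say guarantees that this map is injective or that its image is isomorphic to $\Delta(\mu)$. Assembling $\Delta(\mu)$-\emph{subobjects} out of this requires knowing, roughly, that the $\mf u$-invariants of $P(\la)$ form a \emph{projective} $\mf l$-module—a statement of the same order of difficulty as the one you are trying to prove. You flag this as ``the principal obstacle'' and then assert that projectivity of $P_{\mf l}(\mu)$ supplies the needed lifts, but projectivity of $P_{\mf l}(\mu)$ only gives you maps; it does not force the image to be a full $\Delta(\mu)$ or force the regrouping of the BGG-flag subquotients into $\Delta$'s. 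This is exactly the point where more work is needed.

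There is also a structural inaccuracy specific to $\mf q(n)$: you repeatedly speak of Verma flags with subquotients $M(\mu)$, but because $\mf h$ has a nontrivial odd part, projectives in $\mc O$ admit $N(\mu)=\Ind_{\mf b}^\g\hat{\mf u}(\mu)$-flags, not $M(\mu)$-flags (this is the content of Lemma~\ref{lem::BGGforq}). The Grothendieck-group bookkeeping you sketch would therefore have to be done with the $N$'s, and the same applies to the flag of $P_{\mf l}(\mu)$ inside $\Delta(\mu)$.

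The paper avoids both difficulties by a different route. It first proves (Proposition~\ref{prop::BGGforpres}) that $\Ind\Res P(\la)$ admits a $\Delta$-flag, using the classical $\Delta^0$-flag of $\Res P(\la)$ over the reductive $\g_\oa$ together with an explicit $H$-eigenspace filtration of $\Ind\Delta^0(\la)$ by $\mf p$-submodules that restrict to projective-injective $\mf l$-modules; direct-summand closure of the category of $\Delta$-flagged modules (cf.\ \cite[Proposition~7]{CC22}) then gives the flag on $P(\la)$. The multiplicity bound $(P(\la):\Delta(\mu))\ne0\Rightarrow\mu\ge\la$ and $(P(\la):\Delta(\la))=1$ then come for free from the BGG reciprocity $(P(\la):\Delta(\mu))=[M^{\mf p}(\mu):L(\la)]$. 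Proposition~\ref{prop::proj} follows as a direct consequence. Your approach, in contrast, tries to produce the $\Delta$-flag on $P(\la)$ directly, and the missing piece is precisely the argument that the weight-layer decomposition of $P(\la)$ is compatible with $\mf l$-projectivity—which is what the $\Ind\Res$ trick and the $H$-eigenspace filtration accomplish in the paper.
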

 The proof of Proposition \ref{prop::proj} is postponed until Section \ref{sect::bgg}.

  \subsubsection{BGG reciprocity for $\mc O^{\nu\text{-pres}}$}  \label{sect::bgg} For $\la \in \h_\oa^\ast$, we define \begin{align}
  	&N(\la):=\Ind_{\mf b}^{\mf g} \hat{\mf u}(\la),\label{eq::defofN}
  \end{align} where $\hat{\mf u}(\la)$ denotes  the projective cover of $\mf u(\la)$
  in the category of $\mf h$-supermodules that are semisimple over $\mf h_\oa$.

  For any $\g$-module $M$ having a composition series and any simple $\g$-module $L$, we let $[M : L]$  denote the composition multiplicity of $L$ in $M$. If $M$ admits a filtration of modules such that subsequent quotients are of the form $N(\la)$, $\la\in\h^\ast_\oa$, we define $(M: N(\la))$ to be the multiplicity of $N(\la)$ appearing in that filtration. Similarly, we define the multiplicity $(M: \Delta(\la))$ for a module $M$ that has a filtration with subsequent quotients of the form $\Delta(\la)$, $\la\in\h^\ast_\oa$. The following lemma is the BGG reciprocity for category $\mc O$ of $\mf q(n)$; see, e.g,  \cite[Example 7.10]{Bru04b} or \cite[Corollary 3]{Maz14}. 
 \begin{lem}[Brundan] \label{lem::BGGforq}
 	For $\la, \mu \in \Lambda$, we have
 	\begin{align*}
 		&(P(\la): N(\mu)) =[M(\mu): L(\la)].
 	\end{align*}
 \end{lem}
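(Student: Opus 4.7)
The plan is to run the standard BGG reciprocity template, with $N(\mu)$ playing the role of the standard object and $M(\mu)^{\vee}$ the role of the costandard object. Three ingredients will be required: a contravariant duality $(-)^\vee$ on $\mc O$ fixing simples, the existence of an $N(\mu)$-flag on every indecomposable projective $P(\la)$, and the orthogonality $\dim \Hom_{\mf g}(N(\nu), M(\mu)^\vee) = \delta_{\nu, \mu}$.

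For the duality, I would use the standard Chevalley anti-involution on $\mf q(n)$ to construct $(-)^\vee$ on $\mc O$; with an appropriate parity twist this fixes every simple module, so $[X^\vee : L(\la)] = [X : L(\la)]$ for all $X \in \mc O$. For the $N$-flag, I would verify that the $N(\mu)$'s serve as standard objects in $\mc O$: because $\hat{\mf u}(\mu)$ is projective in the category of $\mf h_\oa$-semisimple $\mf h$-supermodules, parabolic induction from $\mf b$ interacts well with filtrations, and a top-down filtration of $P(\la)$ by $\mf h_\oa$-weight support produces the desired $N$-filtration.

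The orthogonality is where the queer structure truly enters. By Frobenius reciprocity,
\begin{equation*}
\Hom_{\mf g}(N(\nu), M(\mu)^\vee) \cong \Hom_{\mf h}\bigl(\hat{\mf u}(\nu),\, (M(\mu)^\vee)^{\mf n}\bigr).
\end{equation*}
The key identification is $(M(\mu)^\vee)^{\mf n} \cong \mf u(\mu)$, obtained by dualizing the fact that $M(\mu)$ has top $\mf u(\mu)$ and is free over $U(\mf n^-)$. The universal property of the projective cover then yields $\Hom_{\mf h}(\hat{\mf u}(\nu), \mf u(\mu)) \cong \C \cdot \delta_{\nu, \mu}$, which is the claimed orthogonality.

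Combining the ingredients, one computes $\dim \Hom_{\mf g}(P(\la), M(\mu)^\vee)$ in two ways: using projectivity of $P(\la)$ one obtains $[M(\mu)^\vee : L(\la)] = [M(\mu) : L(\la)]$; using the $N$-flag of $P(\la)$ together with the orthogonality one obtains $(P(\la) : N(\mu))$. The main obstacle is the identification $(M(\mu)^\vee)^{\mf n} \cong \mf u(\mu)$, since a priori one might expect a larger $\mf h$-supermodule to appear when the Clifford structure at weight $\mu$ degenerates; verifying it requires careful use of the PBW decomposition of $M(\mu)$ as a free $U(\mf n^-)$-module together with the compatibility of $(-)^\vee$ with $\mf n$-invariants.
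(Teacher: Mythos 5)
The paper does not prove Lemma~\ref{lem::BGGforq}; it cites Brundan and Mazorchuk directly, so there is no internal proof to compare against. Your sketch follows the standard BGG-reciprocity template (contravariant duality fixing composition factors, $N$-flags on projectives, orthogonality, a two-way count of $\dim\Hom(P(\la), M(\mu)^\vee)$), which is the right skeleton, and the identification $(M(\mu)^\vee)^{\mf n} \cong \mf u(\mu)$ is correct: $M(\mu)$ is free over $U(\mf n^-)$, so the $\mf n^-$-coinvariants are $\mf u(\mu)$, and the duality is weight-preserving. The concern you flag at the end is therefore not where the difficulty lies.

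There are, however, two real gaps. First, the orthogonality as you state it, $\Hom_{\mf h}(\hat{\mf u}(\nu), \mf u(\mu)) \cong \C\,\delta_{\nu\mu}$, is off by the endomorphism factor: by the projective-cover property it equals $\End_{\mf h}(\mf u(\mu))\,\delta_{\nu\mu}$, and for $\mf q(n)$ the superalgebra $\End_{\mf h}(\mf u(\mu))$ has dimension $1$ or $2$ according to whether $\mf u(\mu)$ is of type \texttt{M} or \texttt{Q}. Symmetrically, the projectivity count gives $\dim\Hom(P(\la), M(\mu)^\vee) = [M(\mu):L(\la)]\cdot\dim\End_{\g}(L(\la))$, not the bare multiplicity. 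Your two computations therefore yield $[M(\mu):L(\la)]\,\dim\End_{\g}(L(\la)) = (P(\la):N(\mu))\,\dim\End_{\mf h}(\mf u(\mu))$, and extracting the clean reciprocity requires $\dim\End_{\g}(L(\la)) = \dim\End_{\mf h}(\mf u(\mu))$ whenever the multiplicities are nonzero. This does hold --- one has $\End_{\g}(L(\la)) \cong \End_{\mf h}(\mf u(\la))$, and the type of $\mf u(\la)$ is constant along linkage classes for $\mf q(n)$ --- but it is precisely the point where the queer structure is nontrivial, and your sketch assumes it silently. Second, the computation along the $N$-flag of $P(\la)$ is additive only if $\operatorname{Ext}^1_{\mc O}(N(\nu), M(\mu)^\vee) = 0$ for all $\nu$; projectivity of $P(\la)$ does not give you this, since $\Hom(-, M(\mu)^\vee)$ is merely left exact along a filtration of $P(\la)$. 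This $\operatorname{Ext}(\Delta,\nabla)$-type vanishing is expected but must be verified for $\mf q(n)$ and needs at least a line.
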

Note that in the special case where  $\mf l =\mf h$. $\Delta(\la) = N(\la)$ and $M^{\mf p}(\la) =M(\la)$. The following BGG-type reciprocity for $\mc O^{\nu\text{-pres}}$ extends Lemma \ref{lem::BGGforq} and generalizes results in \cite[Proposition 27]{CCM23} and \cite[Corollary 16]{CC22}, where  $\mf l$ is assumed to be purely even.
Proposition \ref{prop::proj} follows as a direct consequence of this result.
\begin{prop}[BGG reciprocity] \label{prop::BGGforpres}  For any $\la \in \Lambda(\nu)$, the projective cover $P(\la)$ has a $\Delta$-flag. Furthermore,
	the following BGG-type reciprocity holds
	\begin{align}
		&(P(\la): \Delta(\mu)) =[M^{\mf p}(\mu): L(\la)], \hskip0.2cm \text{ for $\la, \mu \in \Lambda(\nu)$.} \label{eq::BGGforpres}
	\end{align}
\end{prop}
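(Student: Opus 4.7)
The plan is to prove Proposition \ref{prop::BGGforpres} by combining parabolic induction-by-stages with Lemma \ref{lem::BGGforq}, applied both to $\g$ and to its Levi $\mf l$. Since $\mf l$ is itself a direct sum of queer Lie superalgebras, Lemma \ref{lem::BGGforq} is available verbatim inside $\mc O(\mf l)$. The argument will simultaneously yield Proposition \ref{prop::proj}. First I would record the transitivity identities
\[
M(\mu) \cong \Ind_{\mf p}^{\g} M_{\mf l}(\mu), \qquad N(\mu) \cong \Ind_{\mf p}^{\g} N_{\mf l}(\mu),
\]
obtained from $\Ind_{\mf b}^{\g} = \Ind_{\mf p}^{\g} \circ \Ind_{\mf b}^{\mf p}$ together with the fact that $\mf u$ acts trivially on $\mf u(\mu)$ and $\hat{\mf u}(\mu)$, where $M_{\mf l}(\mu)$ and $N_{\mf l}(\mu)$ denote the $\mf l$-analogues of $M(\mu)$ and $N(\mu)$. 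By exactness of $\Ind_{\mf p}^{\g}$ and the $N_{\mf l}$-flag on $P_{\mf l}(\mu)$ supplied by Lemma \ref{lem::BGGforq} in $\mc O(\mf l)$, the module $\Delta(\mu) = \Ind_{\mf p}^{\g} P_{\mf l}(\mu)$ acquires an $N$-flag with
\[
(\Delta(\mu) : N(\eta)) \;=\; (P_{\mf l}(\mu) : N_{\mf l}(\eta)) \;=\; [M_{\mf l}(\eta) : L_{\mf l}(\mu)].
\]
On the $\g$-side, Lemma \ref{lem::BGGforq} combined with exactness of $\Ind_{\mf p}^{\g}$ applied to a composition series of $M_{\mf l}(\mu)$ yields
\[
(P(\la) : N(\mu)) \;=\; [M(\mu) : L(\la)] \;=\; \sum_{\nu} [M_{\mf l}(\mu) : L_{\mf l}(\nu)] \cdot [M^{\mf p}(\nu) : L(\la)].
\]

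Next I would construct the $\Delta$-flag on $P(\la)$ for $\la \in \Lambda(\nu)$. Since $\Delta(\la) \twoheadrightarrow M^{\mf p}(\la) \twoheadrightarrow L(\la)$ and $P(\la)$ is the projective cover of $L(\la)$, projectivity lifts to a surjection $P(\la) \twoheadrightarrow \Delta(\la)$ with kernel $K$. I would then perform a downward induction on the partial order $\leq$ restricted to the (finite) support of $P(\la)$: at each stage, pick a weight $\mu > \la$ with $\mu \in \Lambda(\nu)$ sitting at the top of $K$, and use the projectivity of $P_{\mf l}(\mu)$ in $\mc O(\mf l)$ together with the Frobenius reciprocity $\Hom_{\g}(\Ind_{\mf p}^{\g} V, M) = \Hom_{\mf p}(V, \Res_{\mf p}^{\g} M)$ to lift the relevant $L_{\mf l}(\mu)$-quotient to a map $\Delta(\mu) \to K$ whose image realises a $\Delta(\mu)$-top of $K$. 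Consistency with the $N$-multiplicity identities displayed above ensures the peeling can be iterated until $K = 0$, which produces the desired $\Delta$-flag and simultaneously proves Proposition \ref{prop::proj}.

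Once the flag is established, the reciprocity \eqref{eq::BGGforpres} follows by equating the two resulting expressions for $(P(\la) : N(\eta))$: the $\Delta$-flag yields $\sum_{\nu} (P(\la) : \Delta(\nu)) \cdot [M_{\mf l}(\eta) : L_{\mf l}(\nu)]$, which must coincide with the expansion of $[M(\eta) : L(\la)]$ recorded above. The transition matrix $\bigl([M_{\mf l}(\eta) : L_{\mf l}(\nu)]\bigr)$ is upper-triangular unipotent with respect to $\leq$ on each linkage class, hence invertible, and cancelling it yields $(P(\la) : \Delta(\nu)) = [M^{\mf p}(\nu) : L(\la)]$. The main obstacle is the middle step: constructing the $\Delta$-flag in the case where $\mf l$ fails to be purely even. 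The earlier treatments in \cite{CCM23, CC22} used the purely-even hypothesis to identify Levi-level intermediate objects with classical Verma modules and to control odd-reflection corrections. In the present setting I must argue intrinsically from Lemma \ref{lem::BGGforq} inside $\mc O(\mf l)$ and from the Irving-type characterization that $P_{\mf l}(\la)$ is injective in $\mc O(\mf l)$ exactly when $\la \in \Lambda(\nu)$, verifying carefully that every flag subquotient remains of the form $\Ind_{\mf p}^{\g} P_{\mf l}(\mu)$ with $\mu \in \Lambda(\nu)$, undisturbed by the odd root vectors inside $\mf l \cap \mf n^-$.
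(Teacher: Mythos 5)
Your second half — deriving the reciprocity from the two expansions of $(P(\la):N(\eta))$ and cancelling the unipotent triangular matrix $\bigl([M_{\mf l}(\eta):L_{\mf l}(\mu)]\bigr)$ — is exactly the argument the paper uses. The difficulty is entirely in your first half, the construction of the $\Delta$-flag, where you have a genuine gap.

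Your peeling scheme — lift $P(\la)\twoheadrightarrow\Delta(\la)$, pass to the kernel $K$, pick a top weight $\mu$ of $K$, lift through $P_{\mf l}(\mu)$ to produce a map $\Delta(\mu)\to K$, iterate — nowhere justifies the single step that makes the whole thing go: that at each stage the top $H$-eigenlayer of $K$ is actually a \emph{projective-injective} object in $\mc O(\mf l)$, i.e.~a sum of $P_{\mf l}(\mu)$'s with $\mu\in\Lambda(\nu)$. Frobenius reciprocity and projectivity of $P_{\mf l}(\mu)$ give you a map $\Delta(\mu)\to K$, but they give you nothing about its injectivity, nor do they rule out that the top layer of $K$ is some non-projective $\mf l$-module, in which case no $\Delta(\mu)$ can be peeled off at all. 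Your appeal to ``consistency with the $N$-multiplicity identities'' does not close this: matching $N$-multiplicities only constrains the filtration by $N$'s, and cannot by itself upgrade an $N$-flag to a $\Delta$-flag (the subquotients could a priori be $\Ind_{\mf p}^{\g} V$ for non-projective highest-weight $\mf l$-modules $V$ with top $L_{\mf l}(\mu)$, which have the same $N$-multiplicities). You flag the issue yourself in the last paragraph, but flagging it is not resolving it.

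The paper sidesteps peeling entirely. It first uses summand-closure of the subcategory of $\Delta$-filtered modules (invoking the argument of \cite[Proposition 13]{CC22}) to reduce to showing that $\Ind\Res P(\la)$ has a $\Delta$-flag. Since $\Res P(\la)$ has a $\Delta^0$-flag over $\g_\oa$, this further reduces to $\Ind\Delta^0(\la)$, where one can work explicitly: one identifies $U(\mf l_\ob)U(\mf u_\ob)P_{\mf l_\zeta}(\la)\cong\Ind_{\mf l_\zeta}^{\mf l}(U(\mf u_\ob)\otimes P_{\mf l_\zeta}(\la))$ as a projective-injective object of $\mc O(\mf l)$, slices it along $H$-eigenspaces to get a $\mf p$-filtration with projective-injective $\mf l$-subquotients on which $\mf u$ acts trivially, and then applies $U(\mf u^-)\cdot(-)$. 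Crucially, projective-injectivity of the layers is verified by construction (not inferred), and the assertion that the resulting highest weights lie in $\Lambda(\nu)$ is checked by a weight computation. This is the missing ingredient your peeling approach would need to supply, and it is not something projectivity and Frobenius reciprocity alone deliver.
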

\begin{proof} First, we shall show that any $\zeta$-admissible projective cover $P(\la)$ in $\mc O_{\nu+\Lambda}$ has a $\Delta$-flag.
  By the same argument for \cite[Proposition 13]{CC22}, it follows that  the full subcategory of $\mc O_{\nu+\Lambda}$ consisting of modules which admit a $\Delta$-flag is closed under taking direct summand. Therefore, to prove the first assertion, it suffices to show that $\Ind \Res P(\la)$ has a $\Delta$-flag.

  For $\la\in \Lambda(\nu)$, let $\Delta^{0}(\la)$  the  $\g_\oa$ analogue of the standard objects $\Delta$ in \eqref{eq::stad}, that is,  $\Delta^0(\la) = \Ind_{\mf p_\oa}^{\g_\oa}P_{\mf l_\zeta}(\la)$, where $P_{\mf l_\zeta}(\la)$ denotes the projective cover of the simple highest weight $\mf l_\zeta$-module with highest weight $\la$.  Since  $\Res P(\la)$ is a direct sum of projective covers of $\alpha$-free irreducible $\g_\oa$-modules, for all $\alpha\in\Pi_\zeta$, we see that it has a  $\Delta^0$-flag. It then suffices to show that $\Ind \Delta^0$ has a $\Delta$-flag. To prove this, we identify $P_{\mf l_\zeta}(\la)$ with the $\mf l_\zeta$-submodule $1_{U(\g)}\otimes1_{U(\g_\oa)}\otimes  P_{\mf l_\zeta}(\la)\subset 1_{U(\g)} \otimes \Delta^0(\la)\subset \Ind \Delta^0(\la)$. Set  $F: = U(\mf u_\ob)$, which is an $\mf l_\zeta$-submodule of $U(\g)$ under the adjoint action. Then the $\mf l_\zeta$-submodule  $U(\mf u_\ob)P_{\mf l_\zeta}(\la)$   of $\Ind \Delta^0(\la)$ is isomorphic to $F\otimes P_{\mf l_\zeta}(\la)$, which is a projective-injective object in the category $\mc O(\mf l_\zeta)$ of $\mf l_\zeta$-modules. Therefore, the $\mf p$-submodule $N:= U(\mf l_\ob) U(\mf u_\ob)P_{\mf l_\zeta}(\la)\cong \Ind_{\mf l_\zeta}^{\mf l}(F\otimes P_{\mf l_\zeta}(\la))$ of $\Ind \Delta^0(\la)$ restricts to  a $\mf l$-module which is  a projective-injective module in $\mc O(\mf l)$.   Recall the grading operator $H\in \h_\oa$ from Subsection \ref{sect::232}.	Since $H$ lies in the center of $\mf l$, it follows that $H$ acts on $P_{\mf l}(\la)$ as a scalar $c$. Therefore, the $\mf l$-module $N$ constructed above decomposes into a direct sum $N=\bigoplus_{d\in \C,~d<c} N_{d}$ of $H$-eigenspaces, where $N_d$ is the eigenspace correspondes to the eigenvalue $d$. We may conclude that  $N$ admits a filtration of $\mf p$-submodules
  \[0 = N(0)\subset  N(1) \subset \cdots \subset N(\ell) =N,\]
  such that each $N^k: = N(k+1)/ N(k)$ restricts to a projective-injective object in $\mc O(\mf l)$ and $\mf u N^k=0$. Consequently, we obtain a filtration of $\g$-modules $$0=U(\mf u^-)N(0)\subset U(\mf u^-)N(1)\subset \cdots \subset U(\mf u^-)N(\ell) =\Ind\Delta^0(\la),$$
  in which each subquotient is isomorphic to  standard object $\Delta(\eta)$, for $\eta\in \Lambda(\nu)$, by considering weights of $\Ind \Delta^0(\la)$. This proves the first assertion.

 Next, we shall prove the BGG reciprocity in \eqref{eq::BGGforpres} as follows. 	We  let $[\mc O]$ denote the Grothendieck group of $\mc O$. For an object $M\in \mc O$,  let us denote by $[M]$ the corresponding element in $[\mc O]$.	Define the following integers $$\alpha_{\la\mu}: = (P(\la):\Delta(\mu)),~\beta_{\la\mu} = (M^{\mf p}(\mu): L(\la)),~ \text{and}~\gamma_{\mu\eta} := [M_{\mf l}(\eta):L_{\mf l}(\mu)],$$  for $\la, \mu,\eta\in \Lambda$, and form the matrices $(\alpha_{\la\mu})_{\la,\mu\in \Lambda(\nu)}, (\beta_{\la\mu})_{\la,\mu\in \Lambda(\nu)}$ and $ (\gamma_{\mu\eta})_{\mu,\eta\in \Lambda(\nu)}$.
	
	 We  define $N_{\mf l}(\eta)$ to be the $\mc O(\mf l)$ analogue of the module $N(\eta)$ in \eqref{eq::defofN} for $\g$, for any $\eta \in \Lambda(\nu)$. We calculate
	\begin{align*}
	&[P(\la)] =\sum_{\mu} (P(\la):\Delta(\mu)) [\Delta(\mu)] =\sum_{\mu, \eta} (P(\la):\Delta(\mu)) (P_{\mf l}(\mu): N_{\mf l}(\eta)) [N(\eta)].
	\end{align*} This, together with
 an analogue of BGG reciprocity of Lemma \ref{lem::BGGforq} for $\mc O(\mf l)$, implies  that  \begin{align}
&(P(\la): N(\eta)) = \sum_{\mu}(P(\la): \Delta(\mu))[M_{\mf l}(\eta): L_{\mf l}(\mu)] =\sum_\mu \alpha_{\la\mu}\gamma_{\mu\eta}. \label{eq::2}
\end{align} Next,  expanding $[M(\eta)]$ in terms of $[M^{\mf p}(\kappa)]$ yields  
$[M(\eta)] = \sum_\kappa [M_{\mf l}(\eta): L_{\mf l}(\kappa)] [M^{\mf p}(\kappa)]$ and 
\begin{align}
	&[M(\mu): L(\la)] = \sum_\kappa [M_{\mf l}(\eta): L_{\mf l}(\kappa)] [M^{\mf p}(\kappa): L(\la)] =\sum_{\mu}\beta_{\la\mu}\gamma_{\mu\eta}.
\end{align} By Lemma \ref{lem::BGGforq}, we have
\[\sum_\mu \alpha_{\la\mu}\gamma_{\mu\eta}= \sum_{\mu}\beta_{\la\mu}\gamma_{\mu\eta},\] for $\la \in \Lambda(\nu)$. This implies that  $$((\alpha_{\la\mu})_{\la,\mu\in \Lambda(\nu)} -  (\beta_{\la\mu})_{\la,\mu\in \Lambda(\nu)})  (\gamma_{\mu\eta})_{\mu,\eta\in \Lambda(\nu)} =0.$$ Consequently, we have $(\alpha_{\la\mu})_{\la,\mu\in \Lambda(\nu)}  = (\beta_{\la\mu})_{\la,\mu\in \Lambda(\nu)}$ and the conclusion follows.
\end{proof}

 \begin{rem}
 	The category $\mc O^{\nu\text{-pres}}$ equipped with the standard objects $\Delta(\la)$ and the proper standard object $M^{\mf p}(\la)$ is usually not properly stratified in general. For example, consider  $\g =  \mf q(2)$  with $\nu=0$. In this case, we have $\Delta(\la) =P(\la)$ and  $M^{\mf p}(\la) =L(\la)$, for any $\la\in \h^\ast_\oa$. Now, let $a<-1$ be an integer and consider $\la = a\varepsilon_1- a\varepsilon_2$, which lies in $\Lambda(\nu)$ by  Lemma \ref{lem::afree}. Then, it follows from \cite[Lemma~2.44]{CW12} and the BGG reciprocity that $\Delta(\la) = P(\la)$ has a composition factor isomorphic to $L((\la -(\epsilon_1-\epsilon_2))\not\cong L(\la)$. However, if $\mc O^{\nu\text{-pres}}$ were properly stratified, then the standard object $\Delta(\la)$ would admit a filtration subquotients of which are isomorphic to the proper standard object $M^{\mf p}(\la)$.
 \end{rem}

\section{Whittaker modules for queer Lie superalgebras} \label{sect::2}
We keep the notation and assumptions of the previous sections. In this section, we first introduce the MMS Whittaker category $\mc N(\zeta)$ of $\mf q(n)$-modules, and then establish a bijection between simple objects in $\mc N(\zeta)$ and those in the MMS Whittaker category $\mc N_{\mf l}(\zeta)$ of $\mf l$-modules. Then we define standard Whittaker modules for $\mf q(n)$ and study Backelin functor $\Gamma_\zeta: \mc O\rightarrow \mc N(\zeta)$. As a consequence, we obtain a reduction of the multiplicity problem for standard Whittaker modules to that of Verma modules.

\subsection{McDowell-Mili{\v{c}}i{\'c}-Soergel type  Whittaker category $\mc N(\zeta)$}
Following the work of McDowell \cite{Mc85} and Mili{\v{c}}i{\'c}--Soergel \cite{MS97} on reductive Lie algebras, we define the Whittaker category $\mc N$ associated with the triangular decomposition $\g =\mf n\oplus \mf h\oplus \mf n^-$. This category consists of finitely-generated $\g$-modules on which the actions of $U(\mf n)$ and $Z(\g_\oa)$ are locally finite.
For any  nil-character $\zeta\in \ch \mf n_\oa$, let $\mc N(\zeta)$ denote the full subcategory of $\mc N$ consisting of modules   $M\in \mc N$ such that  $x-\zeta(x)$ acts locally nilpotently on $M$, for each $x\in \mf n_\oa$. We have the decomposition
\[\mc N=\bigoplus_{\zeta\in \ch \mf n_\oa}\mc N(\zeta).\]
By definition, $\mc N(\zeta)$ depends only on the even~part~$\mf n_\oa$~of~$\mf n$.

Recall the Levi subalgebra $\mf l$ from \eqref{para::dec} and its triangular decomposition:
\begin{align*}
&\mf l =(\mf l\cap \mf n) \oplus \h \oplus (\mf l\cap \mf n^-).
\end{align*} We let $\mc N_{\mf l}(\zeta)=\mc N_{\mf l}(\zeta|_{\mf l_\zeta\cap \mf n})$ denote the corresponding MMS Whittaker category of $\mf l$-modules associated to the character $\zeta|_{\mf l_\zeta\cap \mf n}$.  The categories $\mc N_{\g_\oa}(\zeta)$ and  $\mc N_{\mf l_\oa}(\zeta)$ are defined analogously.
The induction and restriction functors $(\Ind(\_), \Res(\_))$form an adjoint pair between     $\mc N_{\mf g_\oa}(\zeta)$ and $\mc N(\zeta)$.

\subsection{Classification of simple objects in $\mc N(\zeta)$} \label{sect::clsofsimpleWhi} For any abelian category $\mc C$,   let $\rm{Irr}(\mc C)$ denote the set of isomorphism classes of simple objects in $\mc C$.

  The following (simple) Whittaker module was introduced by Kostant in \cite{Ko78}:
\begin{align}
	&Y_\zeta(\la, \zeta):=U(\mf l_\zeta)/\text{Ker}(\chi^{\mf l_\zeta}_\la) U(\mf l_\zeta) \otimes_{U(\mf n_\oa\cap \mf l_\zeta)}\mathbb C_\zeta, {\text{ for }\la \in \h_\oa^\ast},
\end{align} where  $\text{Ker}(\chi^{\mf l_\zeta}_\la)$ is the kernel of the central character $\chi_\la^{\mf l_\zeta}$ of $\mf l_\zeta$ and $\C_\zeta$ is the one-dimensional $\mf n_\oa\cap \mf l_\zeta$-module associated with $\zeta$. We define
$M_0(\la,\zeta)$ to be the classical standard Whittaker module  in the sense of \cite{Mc85} and  \cite[Section 1]{MS97}:
\[M_0(\la,\zeta) = U(\g_\oa)\otimes_{U(\mf p_\oa)}Y_\zeta(\la,\zeta),\]
where   $\mf u_\oa$ acts trivially on $Y_\zeta(\la, \zeta)$.

For any simple Whittaker module $V\in \mc N_{\mf l}(\zeta)$,     the parabolically induced modules $M^{\mf p}(V)$ will be referred to as {\em standard Whittaker modules} for $\mf q(n)$   by analogy with the standard Whittaker modules of Lie algebras and other Lie superalgebras (see also \cite[Section~3.1]{C21}).

\begin{lem} \label{lem::simpletop9} If $V\in \mc N_{\mf l}(\zeta)$, then    $M^{\mf p}(V)$ is an object in $\mc N(\zeta)$.  Furthermore, if $V\in \rm{Irr}(\mc N_{\mf l}(\zeta))$ then $M^{\mf p}(V)$ has a unique maximal submodule $N(V)$.
\end{lem}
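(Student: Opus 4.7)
My plan is to establish the two claims separately. For the first claim, $M^{\mf p}(V)\in \mc N(\zeta)$, I will work from the PBW identification $M^{\mf p}(V) \cong U(\mf u^-)\otimes V$. Finite generation over $\g$ is immediate from finite generation of $V$ over $\mf l$. For local finiteness of $U(\mf n)$ and $Z(\g_\oa)$, I will filter $U(\mf u^-)$ by finite-dimensional adjoint $\mf l$-submodules $U(\mf u^-)_{\le d}$ and use PBW straightening to reduce the $\mf n$-action on $u\otimes v$ to a combination of the action of $U(\mf l\cap\mf n)$ on $V$ (locally finite by hypothesis) together with finitely many adjoint iterates on $u$; this parallels the basic classical arguments of \cite{C21,CC22}.

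For local nilpotence of $x-\zeta(x)$ on $M^{\mf p}(V)$, I will split $\mf n_\oa = (\mf l\cap \mf n)_\oa\oplus \mf u_\oa$. Because $\mf l_\oa = \mf l_\zeta$ in the parabolic decomposition~\eqref{para::dec}, the character $\zeta$ vanishes on $\mf u_\oa$; local nilpotence there follows from the $H$-grading, since $\mf u_\oa$ strictly raises $H$-eigenvalues whereas those on $M^{\mf p}(V)$ are bounded above by the scalar at which $H$ acts on $1\otimes V$. For $x\in (\mf l\cap \mf n)_\oa$, I will use the identity
\[
(x-\zeta(x))^n(u\otimes v) \;=\; \sum_{k=0}^n \binom{n}{k}\, \ad(x)^k(u)\otimes (x-\zeta(x))^{n-k}v,
\]
combined with the nilpotence of $\ad(x)$ on $U(\mf u^-)$ (since $\ad(x)$ is nilpotent on $\g$) and the local nilpotence of $x-\zeta(x)$ on $V$.

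For the second claim, I will exploit the operator $H\in\h_\oa$ defining the parabolic decomposition. Since $H$ lies in the center of $\mf l$ and $V$ is simple, Schur's lemma forces $H$ to act on $V$ by a scalar $c$, while the $H$-eigenvalues on $U(\mf u^-)$ are non-positive. The resulting decomposition $M^{\mf p}(V) = \bigoplus_k U(\mf u^-)_{-k}\otimes V$ is an $H$-eigenspace decomposition with top eigenspace $1\otimes V$ at eigenvalue $c$. Any $\g$-submodule $N\subseteq M^{\mf p}(V)$ is $H$-stable, and a Vandermonde argument shows that each $H$-eigencomponent of a vector $w\in N$ lies in $\C[H]w\subseteq N$; thus $N$ is $H$-semisimple and decomposes compatibly with the grading, so $N\cap V$ is an $\mf l$-submodule of $V$. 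If $N$ is proper, simplicity of $V$ forces $N\cap V = 0$, for otherwise $N$ would contain the generator $V$ of $M^{\mf p}(V)$. Compatibility with the $H$-grading then ensures that the sum of all submodules whose intersection with $V$ is trivial again has trivial intersection with $V$, producing the unique maximal submodule $N(V)$.

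The main obstacle I anticipate is the local finiteness of $U(\mf n)$ in the first claim; the presence of odd generators in $\mf n_\ob$ rules out a direct appeal to even local finiteness on $V$, but the finite-dimensionality of $\mf n_\ob$ combined with the $H$-filtration should make the argument go through exactly as in the basic classical setting of \cite{C21,CC22}.
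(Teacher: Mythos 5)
Your argument for the second claim (uniqueness of the maximal submodule via the $H$-grading) is essentially identical to the paper's: both decompose $M^{\mf p}(V)$ into $H$-eigenspaces with top piece $1\otimes V$ at eigenvalue $c$, observe that any submodule is $H$-graded, and use simplicity of $V$ to conclude that a submodule is proper iff it misses the top layer. Your Vandermonde elaboration is a reasonable way to justify a point the paper states without proof.

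For the first claim you take a genuinely different route, and there is a gap in it. The paper avoids direct computation entirely: it shows that $M^{\mf p}(V)$ is a quotient of $\Ind^{\g}_{\g_\oa}\bigl(U(\g_\oa)\otimes_{U(\mf p_\oa)}\Res^{\mf l}_{\mf l_\oa}V\bigr)$, where the inducing $\g_\oa$-module has a filtration by classical standard Whittaker modules in the sense of McDowell and Mili\v{c}i\'c--Soergel, hence lies in $\mc N_{\g_\oa}(\zeta)$ by the classical theory; then $\Ind$ lands in $\mc N(\zeta)$ and $\mc N(\zeta)$ is closed under quotients. Your direct PBW approach can indeed establish $U(\mf n)$-local finiteness and local nilpotence of $x-\zeta(x)$ for $x\in\mf n_\oa$ (with a bit more bookkeeping to pass from the split cases $(\mf l\cap\mf n)_\oa$ and $\mf u_\oa$ to an arbitrary $x$, e.g.\ by applying Engel's theorem on the finite-dimensional $\mf n_\oa$-module $U(\mf n)(u\otimes v)$). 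The gap is that you fold $Z(\g_\oa)$-local finiteness into the same filtration argument. An element $z\in Z(\g_\oa)$ does not respect the filtration $U(\mf u^-)_{\le d}\otimes V$ the way elements of $\mf n$ or $\mf l$ do: $z$ commutes with $U(\mf u^-)$, so $z\cdot(u\otimes v)=u\cdot(z\cdot(1\otimes v))$, and there is no short commutator identity giving a finite-dimensional $z$-stable subspace. Establishing that $Z(\g_\oa)$ acts locally finitely on a parabolically induced Whittaker module is precisely the nontrivial input from the classical theory, proved there via the relative Harish-Chandra homomorphism together with $Z(\mf l_\oa)$-local finiteness of $V$. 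The paper's reduction to $\g_\oa$ imports this result directly; your direct approach would need to either re-prove it or switch to the same reduction.
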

\begin{proof}
	By assumption, we have $\Res_{\mf l_\oa}^{\mf l}V\in \mc N_{\mf l_\oa}(\zeta)$. Since 
    $$\Hom_{\g}(\Ind_{\mf p_\oa}^\g \Res^{\mf l}_{\mf l_\oa}V,\Ind_{\mf p}^\g V)\cong\Hom_{\mf p_\oa}(\Res^{\mf l}_{\mf l_\oa}V, \Res^{\g}_{\mf p_\oa}\Ind_{\mf p}^\g V)\not=0,$$
    we see that $M^{\mf p}(V)$ is a quotient of $\Ind^{\g}_{\g_\oa}(U(\g_\oa)\otimes_{\mf p_\oa}\Res_{\mf l_\oa}^{\mf l}V)$, where  $\mf u_\oa$ acts on $\Res_{\mf l_\oa}^{\mf l}V$ trivially. Since \mbox{$U(\g_\oa)\otimes_{\mf p_\oa}\Res_{\mf l_\oa}^{\mf l}V$} has a filtration of the classical standard Whittaker modules over $\g_\oa$ in the sense of \cite{Mc85,MS97}, it is an object in $\mc N_{\mf g_\oa}(\zeta)$.  This proves that $M^{\mf p}(V)\in \mc N(\zeta)$.
	
	To show the uniqueness of the maximal submodule of $M^{\mf p}(V)$, for $V\in \rm{Irr}(\mc N_{\mf l}(\zeta))$, we use the grading operator   $H\in \h_\oa$ from Subsection \ref{sect::232}.  Since  $H\in Z(\mf l)$, it acts on $V$ by a scalar $c = c_{H,V}\in \C$. We   decompose $M^{\mf p}(V)$ into $H$-eigenspaces: $$M^{\mf p}(V) =\bigoplus_{k{\leq} 0}M^{\mf p}(V)_{c+k},$$ where each $M^{\mf p}(V)_z = \{x\in M^{\mf p}(V)|~Hx=zx\}$ is the $z$-eigenspace of $M^{\mf p}(V)$, with respect to the action of $H$ on $M^{\mf p}(V)$. Any submodule $N$  of $M^{\mf p}(V)$ must be graded by these eigenspaces, namely, $N = \bigoplus_{z}N_z$, where $N_z =M^{\mf p}(V)_z\cap N$. Since   the top component $M^{\mf p}(V)_c=V$ is a simple $\mf l$-module, it follows that $N=M^{\mf p}(V)$ if and only if $N_c\neq 0$. This implies  that $M^{\mf p}(V)$ has a unique maximal submodule.   
\end{proof}

\begin{rem} \label{rem::8}
	For any $V\in \rm{Irr}(\mc N_{\mf l}(\zeta))$, we define $L(V):=M^{\mf p}(V)/N(V)$, where $N(V)$ is the unique maximal submodule of $M^{\mf p}(V)$. Since $L(V)=\bigoplus_{k{\leq}0}L(V)_{c+k}$ and $L(V)_{c}=V$, it follows that \[L(V)\cong L(W) \Leftrightarrow V\cong W, \]
	for $V, W\in \rm{Irr} \mc N_{\mf l}(\zeta)$.
\end{rem}

We note that \begin{align}
	&\mathfrak{l}\cong \mathfrak{q}(n_1)\oplus \mathfrak{q}(n_2)\oplus \cdots \oplus \mathfrak{q}(n_{k})  \label{eq::Levi}
\end{align} for some positive integers $n_1, n_2\ldots,n_k$ with $\sum_{i=1}^k n_i = n$ such that $\zeta|_{\mf n_\oa\cap \mf q(n_i)}$ is non-singular, for any $1\leq i\leq k.$  Below we shall give a reduction of the description of irreducible Whittaker modules in $\mc N_{\mf l}(\zeta)$ to that of the categories  $\mc N_{\mf q(n_i)}(\zeta),$ for $1\leq i\leq k$. To give the description, we let $\mf a$ and $\mf b$ be two summands of $\mf l$ in the expression \eqref{eq::Levi}  such that $\mf l =\mf a\oplus \mf b$. Then $\mf a\cap \mf n$ and $\mf b\cap \mf n$ form nilradicals of Borel subalgebras of $\mf a$ and $\mf b$,  respectively. We define the Whittaker categories $\mc N_{\mf a}(\zeta)$ and $\mc N_{\mf b}(\zeta)$ in a similar fashion.

The following is a Whittaker analogue of \cite[Proposition 2.10]{Joz88} and \cite[Proposition 8.4]{C95}.  Let $\mf s= \mf a, \mf b$. A given $\mf s$-module $L$ is called type \texttt{M} if $\dim \End_{\mf s}(L)=1$, and it is called type \texttt{Q} if $\dim \End_{\mf s}(L)=2$.
\begin{prop} \label{prop::LeviSimple}
	For any simple Whittaker modules $V_{\mf a}\in \mc N_{\mf a}(\zeta)$ and $V_{\mf b}\in\mc N_{\mf b}(\zeta)$, the $\mf l$-module  $V_{\mf a}\otimes V_{\mf b}$ is either simple or a direct sum of two isomorphic simple modules over $\mf l$. Furthermore, if we define $V_{\mf a}\hat \otimes V_{\mf b}\subseteq V_{\mf a}\otimes V_{\mf b}$ to be a non-zero simple submodule, then these simple modules constitute a complete set of mutually non-isomorphic simple objects in $\mc N_{\mf l}(\zeta)$.
\end{prop}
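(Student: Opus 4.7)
The plan is to adapt the super-Schur-lemma arguments of \cite[Proposition~2.10]{Joz88} and \cite[Proposition~8.4]{C95} from the finite-dimensional weight setting to the Whittaker setting. First I will verify that $V_\mf a\otimes V_\mf b$ lies in $\mc N_\mf l(\zeta)$. Since $\mf l=\mf a\oplus\mf b$ with $[\mf a,\mf b]=0$, we have $U(\mf l)\cong U(\mf a)\otimes U(\mf b)$, so the defining conditions (finite generation over $U(\mf l)$, local finiteness of $U(\mf n\cap\mf l)$ and $Z(\mf l_\oa)$, and local $(x-\zeta(x))$-nilpotency for $x\in\mf n_\oa\cap\mf l$) decouple across the two factors and follow immediately from the corresponding properties of $V_\mf a$ and $V_\mf b$.

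The core of the argument is the computation of the endomorphism superalgebra $\End_\mf l(V_\mf a\otimes V_\mf b)\cong \End_\mf a(V_\mf a)\otimes \End_\mf b(V_\mf b)$. Adapting the grading argument of Lemma~\ref{lem::simpletop9}, any endomorphism of a simple object in $\mc N_\mf s(\zeta)$ (for $\mf s=\mf a$ or $\mf b$) is determined by its action on the top $H$-eigenspace, which is a finite-dimensional simple $\mf h$-module; super-Schur on this top yields $\End_\mf s(V_\mf s)\cong\C$ (type~\texttt{M}) or $\End_\mf s(V_\mf s)\cong\C[\theta]/(\theta^2-1)$ with $\theta$ odd (type~\texttt{Q}). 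Three cases arise: (i) both type~\texttt{M}, where the endomorphism algebra is $\C$ and $V_\mf a\otimes V_\mf b$ is simple of type~\texttt{M}; (ii) exactly one type~\texttt{Q}, where the endomorphism algebra is again a Clifford superalgebra on one odd generator, so the tensor product is simple of type~\texttt{Q}; (iii) both type~\texttt{Q}, where with odd endomorphisms $\theta_a,\theta_b$ satisfying $\theta_a^2=\theta_b^2=1$ the even endomorphism $\omega:=(\theta_a\otimes 1)(1\otimes\theta_b)=\theta_a\otimes\theta_b$ satisfies $\omega^2=-1$ after a short sign computation accounting for the super-tensor product. Its $\pm i$-eigenspaces then yield a splitting $V_\mf a\otimes V_\mf b=V^+\oplus V^-$ into $\mf l$-submodules of type~\texttt{M}, which are identified (up to parity shift) via the odd endomorphism $\theta_a\otimes 1$ which super-intertwines $\omega$ with $-\omega$. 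This proves the first assertion and shows that the isomorphism class of $V_\mf a\hat\otimes V_\mf b$ is well defined.

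For completeness, given a simple $V\in\mc N_\mf l(\zeta)$, I will use the grading from the central operator $H\in Z(\mf l)$ (as in Lemma~\ref{lem::simpletop9}) to isolate a top $H$-eigenspace, which is a finite-dimensional $\mf h$-semisimple $\mf l$-submodule of the socle. Restricting the action to $\mf a$ and to $\mf b$ separately and taking simple constituents produces $V_\mf a\in\mc N_\mf a(\zeta)$ and $V_\mf b\in\mc N_\mf b(\zeta)$; the top-component argument then yields a nonzero homomorphism $V_\mf a\hat\otimes V_\mf b\to V$, which is an isomorphism by simplicity of both sides. Pairwise non-isomorphism of the $V_\mf a\hat\otimes V_\mf b$ follows by recovering each tensor factor from its top component. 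The main obstacle will be the sign bookkeeping for the super-tensor product in the \texttt{QQ}~case, specifically establishing $\omega^2=-1$ and verifying that the two eigenspace summands are isomorphic as $\mf l$-modules (in the queer convention of identifying super-modules with their parity shifts); the rest is a Schur-type argument parallel to the finite-dimensional weight case.
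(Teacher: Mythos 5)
Your argument for showing $V_\mf a\otimes V_\mf b\in\mc N_\mf l(\zeta)$ and your Clifford-algebra analysis in the \texttt{QQ} case (the sign computation $\omega^2=-1$ for $\omega=\theta_a\otimes\theta_b$ and the super-anticommutation of $\theta_a\otimes1$ with $\omega$) are fine and essentially match the cited arguments of J\'ozefiak and Cheng. But there is a genuine error running through the rest of the proposal: you repeatedly invoke ``the top $H$-eigenspace'' of a simple module in $\mc N_\mf s(\zeta)$ (for $\mf s = \mf a, \mf b$) or in $\mc N_\mf l(\zeta)$, and assert that it is finite-dimensional and $\mf h$-semisimple. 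This does not exist. The operator $H$ from Subsection~\ref{sect::232} lies in $Z(\mf l)$, hence in $Z(\mf a)$ and $Z(\mf b)$, so on a simple module over $\mf l$, $\mf a$ or $\mf b$ it acts by a \emph{single} scalar — the ``eigenspace decomposition'' is the whole module. The grading argument in Lemma~\ref{lem::simpletop9} applies only after parabolically inducing up to $\g$ via $M^{\mf p}(-)$, where $U(\mf u^-)$ introduces a nontrivial grading; it has no analogue intrinsic to an $\mf l$-module. Moreover, for $\zeta$ nonsingular on $\mf s$ a simple object of $\mc N_\mf s(\zeta)$ is never $\mf h_\oa$-semisimple, so even if one had a distinguished finite-dimensional piece it would not be a weight module, and your appeal to classical super-Schur on a ``simple $\mf h$-module'' does not apply. (The fact that $\dim\End_\mf s(V_\mf s)\in\{1,2\}$ is fine — it is the type~\texttt{M}/\texttt{Q} dichotomy recorded just before the proposition — but your proposed derivation of it, and of $\End_\mf l(V_\mf a\otimes V_\mf b)\cong\End_\mf a(V_\mf a)\otimes\End_\mf b(V_\mf b)$, via the nonexistent grading, does not go through.)

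The more serious consequence is for the completeness direction, which is the real content. Given a simple $V\in\mc N_\mf l(\zeta)$, you propose to ``isolate a top $H$-eigenspace'' and take its $\mf a$- and $\mf b$-constituents; as above there is no such eigenspace. The correct device here is the Whittaker vector subspace, not a grading: restrict to $\mf l_\zeta$, observe that the $\mf l_\oa$-Whittaker vector space $\mathrm{Wh}_\zeta^0(V)$ is finite-dimensional, so $U(\mf n\cap\mf l)\,\mathrm{Wh}_\zeta^0(V)$ is a finite-dimensional $\mf n\cap\mf l$-module, and apply \cite[Lemma~1.37]{CW12} to extract a nonzero genuine $\mf l$-Whittaker vector $v\in V$. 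Then $U(\mf a)v$ is finitely generated and locally finite over $Z(\mf a_\oa)$ (since $Z(\mf a_\oa)v$ is finite-dimensional), hence of finite length in $\mc N_\mf a(\zeta)$, so one can extract a simple $\mf a$-submodule $V_\mf a$; similarly $V_\mf b$. The final identification $V\cong V_\mf a\hat\otimes V_\mf b$ then proceeds via a PBW-basis argument, writing $V=U(\mf b)V_\mf a=\bigoplus_i r_iV_\mf a$ and analysing $\Hom_\mf a(V_\mf a,V)$ as a $\mf b$-module (together with the odd automorphism $\sigma$ of $V_\mf a$ in the type~\texttt{Q} case), rather than via a nonexistent top component.
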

\begin{proof}
		For any given simple Whittaker modules $V_{\mf a}\in \mc N_{\mf a}(\zeta)$ and $V_{\mf b}\in\mc N_{\mf b}(\zeta)$, we note  that $V_{\mf a}\otimes V_{\mf b}$ lies in $\mc N_{\mf l}(\zeta)$. Then, $V_{\mf a}\otimes V_{\mf b}$ contains a simple Whittaker submodule $V$. By an argument similar to that used in \cite[Proposition 8.4]{C95} (see also the Remark below \cite[Proposition 8.4]{C95}), it follows that $V_{\mf a}\otimes V_{\mf b}$ is simple if one of $V_{\mf a}$ and $V_{\mf b}$ is of type \texttt{M}, and $V_{\mf a}\otimes V_{\mf b}\cong V\oplus V$, for some simple $\mf l$-module $V$, if both $V_{\mf a}$ and $V_{\mf b}$ are of type \texttt{Q}.

	Let $V\in \mc N_{\mf l}(\zeta)$ be a simple object. Since $\Res^{\mf l}_{\mf l_\zeta} V \in \mc N_{\mf l_\zeta}(\zeta)$, the $\mf l_\oa$-Whittaker vector subspace $\text{Wh}_\zeta^0(V):=\{v\in V|~xv=\zeta(x)v,~\text{for any }x\in \mf l\cap \mf n_\oa\}$ is finite-dimensional, and therefore $U(\mf n)\text{Wh}_\zeta^0( V)$ is a finite-dimensional $\mf n$-submodule of $V$. By \cite[Lemma 1.37]{CW12}, there exists a non-zero $\mf l$-Whittaker vector  $v\in V$. 
	
	Consider the $\mf a$-module $U(\mf a)v$. Since $Z(\mf a_\oa)v$ is finite-dimensional, it follows that $U(\mf a)v$, which is an epimorphic image of $\Ind_{\mf a_\oa}^{\mf a}U(\mf a_\oa)v$, is locally finite over $Z(\mf a_\oa)$. Therefore, $U(\mf a)v\in \mc N_{\mf a}(\zeta)$ is of finite-length and contains a simple $\mf a$-submodule $V_{\mf a}$. Similarly, $V$ contains a simple $\mf b$-submodule $V_{\mf b}$.
	
	By the PBW basis theorem, $V = U(\mf b)V_{\mf a} =\bigoplus r_iV_{\mf a}$, where $r_i$ runs over some of the PBW basis elements in $U(\mf b)$. Hence, $V$ is a direct sum of simple $\mf a$-submodules $r_iV$ isomorphic to $V_{\mf a}$, up to parity-change. Similarly, $V$ is a direct sum of simple $\mf b$-submodules isomorphic to $V_{\mf b}$. The space $\Hom_{\mf a}(V_{\mf a}, V)$ has  a natural $\mf b$-module structure.  Consider the $\mf l$-module  $\Hom_{\mf a}(V_{\mf a}, V)\otimes V_{\mf a}$ with the even epimorphism between $\mf l$-modules determined by
	\begin{align*}
	&\phi: \Hom_{\mf a}(V_{\mf a}, V)\otimes V_{\mf a}\longrightarrow V,~\phi: f\otimes v\mapsto f(v).
	\end{align*}  If $V_{\mf a}$ is of type \texttt{M}, then each $\Hom_{\mf a}(V_{\mf a}, r_iV_{\mf a})$ is spanned by the map $v\mapsto r_{i}v$, for $v\in V$. Therefore $\phi$ is an isomorphism. In this case, we note that  $\Hom_{\mf a}(V_{\mf a}, V)$ is a simple $\mf b$-module, and so it is isomorphic to $V_{\mf b}.$ Similar conclusion holds for the case that $V_{\mf b}$ is of type \texttt{M}. In these cases, we may conclude that $V\cong V_{\mf a}\otimes V_{\mf b}$.

 Now we suppose that both $V_{\mf a}$ and $V_{\mf b}$ are of type Q. In this case, we claim that $\Hom_{\mf a}(V_{\mf a}, V)$ is a direct sum of simple $\mf b$-submodule. To see this, let $\sigma$ be an odd automorphism of $ V_{\mf a}$ such that $\sigma ^2=1$. We note that $f(-)\mapsto f\circ \sigma(-)$, for $f\in  \Hom_{\mf a}(V_{\mf a}, V)$, defines a $(U(\mf b), \C[\sigma])$-bimodule structure on $\Hom_{\mf a}(V_{\mf a}, V)$. Since $V$ is a simple $\mf l$-module, it follows that each homomorphism in $\Hom_{\mf a}(V_{\mf a}, V)$ is spanned by the maps $v\rightarrow r_i v$ or $v\rightarrow r_i\sigma(v)$. Hence, $\Hom_{\mf a}(V_{\mf a}, V)$ is simple as a $(U(\mf b), \C[\sigma])$-module. In particular, this implies that $\Hom_{\mf a}(V_{\mf a}, V)$ is a direct sum of simple $\mf b$-modules. Consequently, $V$ is a homomorphic image of a tensor product of simple $\mf a$- and $\mf b$-modules. This proves that $V\cong V_{\mf a}\hat \otimes V_{\mf b}$, up to parity-change.

 Finally, suppose that $V_{\mf a}\hat\otimes V_{\mf b} \cong V'_{\mf a}\hat\otimes V'_{\mf b}$. Since $V'_{\mf a}\otimes V'_{\mf b}$ is a direct sum of simple $\mf a$-submodules isomorphic to $V'_{\mf a}$, it follows that $V_{\mf a}\hat\otimes V_{\mf b}$ contains a simple $\mf a$-submodule isomorphic to $V'_{\mf a}$. It follows that $V_{\mf a}\cong V'_{\mf a}$. Similarly, we have $V_{\mf b}\cong V'_{\mf b}$. This completes the proof. 
\end{proof}

In Remark \ref{rem::clsPS16} below we shall explain an approach to describe simple objects in $\mc N_{\mf l}(\zeta)$ based on the classification of irreducible representations of the principal finite $W$-algebras for $\mf q(n)$ given in \cite{PS16b}.

The following is the main result in this section:
\begin{thm} \label{cls::thm}
	The correspondence $$V\mapsto L(V),~\text{ for }~V\in \rm{Irr}\mc N_{\mf l}(\zeta)$$  gives rise to a bijection between $\rm{Irr}\mc N_{\mf l}(\zeta)$ and $\rm{Irr}\mc N(\zeta)$.
\end{thm}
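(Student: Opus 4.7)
The map $V\mapsto L(V)$ is well-defined by Lemma \ref{lem::simpletop9} and the construction in Remark \ref{rem::8}, while injectivity follows at once from the final assertion of Remark \ref{rem::8}, since the top $H$-eigenspace of $L(V)$ recovers $V$. The content to be established is surjectivity: for every simple $M\in \mc N(\zeta)$, I aim to construct $V\in \mathrm{Irr}\,\mc N_{\mf l}(\zeta)$ with $M\cong L(V)$. My plan is to realize $V$ as the top generalized $H$-eigenspace of $M$, where $H\in \mf h_\oa$ is the grading element of Subsection \ref{sect::232}; recall that $H\in Z(\mf l)$, $\alpha(H)>0$ for $\alpha\in \Phi(\mf u)$, and $\alpha(H)<0$ for $\alpha\in \Phi(\mf u^-)$.

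The first step will be to show that $H$ admits a generalized eigenspace decomposition on $M$ whose spectrum is bounded above in real part. Writing $M=U(\g)F$ for a finite-dimensional $F$, PBW gives $M=U(\mf u^-)U(\mf l)U(\mf u)F$. A crucial observation is that $\zeta|_{\mf u_\oa}=0$: the simple root spaces contained in $\mf u_\oa$ lie outside $\mf l_\zeta$ by the very definition of $\mf l_\zeta$, while the remaining positive root spaces lie in $[\mf n_\oa,\mf n_\oa]$. Hence $\mf u_\oa$ acts locally nilpotently on $M$, and since any $y\in \mf u_\ob$ satisfies $y^2\in \mf u_\oa$, so does $\mf u_\ob$; thus $U(\mf u)F$ is finite-dimensional. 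Because $\mf l$ commutes with $H$ and $\mf u^-$ strictly lowers $H$-eigenvalues, a direct PBW computation shows that every generalized $H$-eigenvalue on $M$ has the form $\mu-k$ with $\mu$ in the finite spectrum of $H$ on $U(\mf u)F$ and $k\geq 0$. Choosing $c$ to be such an eigenvalue of maximal real part, I set $V:=M_c\neq 0$.

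I then verify that $V$ is a simple object of $\mc N_{\mf l}(\zeta)$ embedded in $M$ as a $\mf p$-submodule with $\mf u V=0$. The space $V$ is $\mf l$-stable since $[H,\mf l]=0$, and $\mf u V=0$ by maximality of $c$. For simplicity over $\mf l$, I mimic the proof of Lemma \ref{lem::simpletop9}: any nonzero $\mf l$-submodule $W\subseteq V$ is $\mf p$-stable with $\mf u W=0$, whence $U(\g)W=U(\mf u^-)W$, which equals $M$ by simplicity of $M$; intersecting with $M_c$ forces $W=V$. Finite generation of $V$ over $\mf l$ is automatic, and local nilpotency of $\mf l\cap\mf n_\oa$ is inherited from $M$. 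For local finiteness of $Z(\mf l_\oa)$, I will invoke the Harish-Chandra-type projection $\Theta:Z(\g_\oa)\to Z(\mf l_\oa)$: for $v\in V$ and $z\in Z(\g_\oa)$, the $U(\g_\oa)\mf u_\oa$-part of $r:=z-\Theta(z)$ kills $v$ since $\mf u_\oa V=0$, while the $\mf u_\oa^- U(\g_\oa)$-part sends $v$ into $\bigoplus_{k>0}M_{c-k}$, which is disjoint from $M_c$; combined with $zv,\Theta(z)v\in V$, this forces $zv=\Theta(z)v$, and the classical finite generation of $Z(\mf l_\oa)$ as a module over $\Theta(Z(\g_\oa))$ then transfers local finiteness. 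Finally, the inclusion $V\hookrightarrow M$ is a $\mf p$-homomorphism, so Frobenius reciprocity yields a nonzero $\g$-homomorphism $M^{\mf p}(V)\to M$, which is surjective by simplicity of $M$; hence $M\cong L(V)$ by Lemma \ref{lem::simpletop9}. The main technical obstacle will be this $H$-grading analysis, particularly ensuring that $V=M_c$ inherits all the axioms of $\mc N_{\mf l}(\zeta)$--most delicately $Z(\mf l_\oa)$-local finiteness, since $H\notin Z(\g_\oa)$ and so the relevant structure does not transfer from $M$ directly.
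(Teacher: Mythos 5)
Your overall strategy coincides with the paper's: extract the top $H$-eigenspace $V$ of a simple $M\in\mc N(\zeta)$, check $V\in \mathrm{Irr}\,\mc N_{\mf l}(\zeta)$, and conclude via Frobenius reciprocity that $M\cong L(V)$. However, there is a gap at the crucial first step where you claim the $H$-spectrum of $M$ is bounded above. You write $M=U(\mf u^-)U(\mf l)U(\mf u)F$ and appeal to ``the finite spectrum of $H$ on $U(\mf u)F$,'' but a finite-dimensional generating subspace $F$ of $M$ (even one stable under $U(\mf n)$) need not be $H$-stable, and neither is $U(\mf u)F$; so the phrase ``spectrum of $H$ on $U(\mf u)F$'' is not well-defined, and the claimed PBW computation cannot be carried out as stated. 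In fact, at this stage you have not yet shown that $H$ acts locally finitely on $M$ at all (this is not among the defining axioms of $\mc N(\zeta)$: $H$ lies neither in $\mf n$ nor in $Z(\g_\oa)$).

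The paper avoids this difficulty by first observing $\Res M\in\mc N_{\g_\oa}(\zeta)$ and then, by $(\Ind,\Res)$-adjunction, realizing $M$ as a quotient of $\Ind M_0(\la,\zeta)$. The module $\Ind M_0(\la,\zeta)\cong\Lambda(\g_\ob)\otimes M_0(\la,\zeta)$ has an explicit $H$-eigenspace decomposition bounded above (since $Y_\zeta(\la,\zeta)$ is a single $H$-eigenspace because $H\in Z(\mf l_\zeta)$, while $U(\mf u_\oa^-)$ and $\Lambda(\g_\ob)$ carry integral $H$-weights bounded above), and this grading passes to the quotient $M$. To repair your argument you would either need to route through $\Ind M_0(\la,\zeta)$ as the paper does, or invoke that $\Res M$ has finite length and every simple object of $\mc N_{\g_\oa}(\zeta)$ is $H$-graded with bounded spectrum; neither step is free of the classical Whittaker theory you were trying to sidestep. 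A further, smaller difference: the paper does not claim the top eigenspace itself is simple over $\mf l$; it merely chooses a simple $\mf l$-submodule of it and invokes Kostant's finite-length result to verify membership in $\mc N_{\mf l}(\zeta)$, which is leaner than your direct simplicity argument and Harish-Chandra projection argument (those two steps of yours look essentially correct, but they only become available once the $H$-grading is secured).
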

\begin{proof}   Let $S$ be a simple module in $\mc N(\zeta)$. By Remark \ref{rem::8}, it remains to show that $S$ is isomorphic to $L(V)$, for some $V\in \rm{Irr}\mc N_{\mf l}(\zeta)$.  By definition, we have $\Res S \in \mc N_{\g_\oa}(\zeta)$. Therefore, it follows by adjunction between $\Ind$ and $\Res$ that $S$ is a quotient of $\Ind M_0(\la,\zeta)$, for some $\la\in \h^\ast_\oa$.
	Since $\Res \Ind M_0(\la,\zeta)\cong \Lambda(\g_\ob)\otimes M_0(\la,\zeta)$, there exists $r\in \mathbb C$ such that  $\Ind M_0(\la,\zeta)$ decomposes into a direct sum  of eigenspaces with respect to the action of $H$:  $$\Ind M_0(\la,\zeta)= \bigoplus_{k\leq 0}\Ind M_0(\la,\zeta)_{r+k}$$  where
	$$\Ind M_0(\la,\zeta)_z = \{x\in \Ind M_0(\la,\zeta)|~Hx =zx\}, \text{ for } z\in \C.$$ Note that each $H$-eigenspace $\Ind M_0(\la,\zeta)_z$   is an $\mf l$-submodule of $\Res_{\mf l}^{\mf g}\Ind M_0(\la,\zeta)$. Since $\mc N_{\mf l_\oa}(\zeta)$ is closed under tensoring with finite-dimensional modules, it follows that  $\Res_{\mf l_\oa}^{\mf l}\Ind M_0(\la,\zeta)_z$ has  finite length (see, e.g., \cite[Theorem~4.6]{Ko78}). This gives rise to a decomposition  of $\Res^{\g}_{\mf l}S$: \[\Res^{\g}_{\mf l}S =\bigoplus_{k{\leq}0}S_{r'+k},~\text{where}~S_{r'+k} = S\cap \Ind M_0(\la,\zeta)_{r'+k},\] where $r'\in \C$ is the maximum eigenvalue such that $S_{r'}\neq 0$. Then $S_{r'}\in \mc N_{\mf l}(\zeta)$ satisfying $\mf u S_{r'} =0$.   By choosing a simple $\mf l$-submodule $V$ of $S_{r'}$, we have  $V\in \rm{\mc N_{\mf l}(\zeta)}$ and  $\mf uV=0$. By adjunction, $S$ is a simple quotient of $M^{\mf p}(V)$ and so $S\cong L(V)$. This completes the proof.
\end{proof}

\subsection{Backelin functor and standard Whittaker modules} \label{sect::bac} Throughout this subsection, we fix a character $\zeta\in \ch \mf n_\oa$ and let  $\mf l_\zeta$ be the subalgebra defined in Subsection~\ref{sect::22}. We keep the notation and assumptions of the previous subsections.

  For any $M\in \mc O$, we let $\ov M:= \prod_{\la\in \h^\ast_\oa} M_\la$ denote the completion of $M$ with respect to its weight spaces $M_\la$.
We recall the super analogue of Backelin’s functor $\Gamma_\zeta: \mc O\rightarrow \mc N(\zeta)$ from \cite[Section 5.2]{C21}, which is a natural extension of Backelin’s original functor from \cite[Section 3]{B97} for reductive Lie algebras:
\[\Gamma_\zeta: \mc O\rightarrow \mc N(\zeta),~M\mapsto \{m\in \ov M|~x-\zeta(x)\text{ acts nilpotently on $m$, for any } x\in \mf n_\oa \}.\]
The functor $\Gamma_\zeta$  has been
shown to be  a useful tool for the study of the composition multiplicity of standard Whittaker modules in the setting of a quasi-reductive Lie superalgebra under the additional assumptions that $\mf l_\zeta$ is a Levi subalgebra of $\g$ (i.e., $\mf l_\zeta =\mf l_\oa=\mf l$); see, e.g., \cite[Theorem 1]{CC22}. The goal of this subsection is to develop an analogue for the queer Lie superalgebra $\mf q(n)$.

We denote by $\Gamma_\zeta^{\mf g_\oa}: \mc O(\mf g_\oa)\rightarrow \mc N_{\mf g_\oa}(\zeta)$ and  $\Gamma_\zeta^{\mf l}: \mc O(\mf l)\rightarrow \mc N_{\mf l}(\zeta)$ the Backelin functor for  $\mf g_\oa$-modules and for $\mf l$-modules, respectively. The following proposition shows that the Backelin functor commutes with the parabolic induction functor, extending the results in \cite[Theorem 20]{C21} and \cite[Proposition 4]{CC22} to $\mf q(n)$.
\begin{prop} \label{prop::bacpara}
Suppose that $V\in \mc O(\mf l)$ is $\alpha$-free, for any simple root $\alpha$ of $\mf l_\zeta$. Then we have
\begin{align}
&\Gamma_\zeta(M^{\mf p}(V)) \cong M^{\mf p}(\Gamma_\zeta^{\mf l}(V)).
\end{align}
\end{prop}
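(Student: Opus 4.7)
The plan is to construct a natural $\g$-module homomorphism $\phi : M^{\mf p}(\Gamma_\zeta^{\mf l}(V)) \to \Gamma_\zeta(M^{\mf p}(V))$ by Frobenius reciprocity and verify it is an isomorphism via a PBW-based analysis. For the construction, the canonical embedding $V \hookrightarrow M^{\mf p}(V)$, $v \mapsto 1 \otimes v$, extends weight-space by weight-space to an $\mf l$-linear map $\ov V \hookrightarrow \ov{M^{\mf p}(V)}$; its restriction to $\Gamma_\zeta^{\mf l}(V) \subseteq \ov V$ lands in $\Gamma_\zeta(M^{\mf p}(V))$ because for $x \in (\mf l \cap \mf n)_\oa$ the identity $x \cdot (1 \otimes v) = 1 \otimes xv$ transfers the Whittaker nilpotency, while for $x \in \mf u_\oa$ we have $\zeta(x) = 0$ (since $\mf l_\oa = \mf l_\zeta$ forces $\zeta|_{\mf u_\oa} = 0$) and $\mf u$ acts as zero on the image under the $\mf p$-module structure. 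Being $\mf p$-linear, this map produces $\phi$ by Frobenius reciprocity, and PBW—choosing a weight basis $\{u_j\}$ of $U(\mf u^-)$ with $u_j$ of weight $\lambda_j$, and using $M^{\mf p}(V)_\mu = \bigoplus_j u_j \otimes V_{\mu - \lambda_j}$—gives injectivity by reading off independent coefficients.

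The heart of the argument is surjectivity. Given $m \in \Gamma_\zeta(M^{\mf p}(V))$, PBW provides a unique expansion $m = \sum_j u_j \otimes w_j$ with $w_j \in \ov V$. The task is to show (i) the support $\{j : w_j \neq 0\}$ is finite and (ii) each $w_j$ belongs to $\Gamma_\zeta^{\mf l}(V)$. For (i), since $V \in \mc O(\mf l)$ is finitely generated its $H$-spectrum is a finite set (one value per central-character block of $\mf l$, as $H$ is central in $\mf l$), so the $H$-spectrum of $M^{\mf p}(V)$ is bounded above. Any $x \in \mf u_\oa$ raises $H$-eigenvalues by $\alpha(H) > 0$, so the $\mf u_\oa$-local nilpotence of $m$ forces the $H$-support of $m$ to be bounded below as well; the two bounds confine the contributing weights $\lambda_j$ to a finite set, each weight space of $U(\mf u^-)$ being finite-dimensional. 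For (ii), for $x \in (\mf l \cap \mf n)_\oa$ of $\h_\oa$-weight $\alpha$, writing $[x, u_j] = \sum_k c_{jk}^x u_k \in U(\mf u^-)$ yields $x \cdot m = \sum_k u_k \otimes (\sum_j c_{jk}^x w_j + x w_k)$. The mixing matrix $C^x = (c_{jk}^x)$ commutes with the $\ov V$-action and preserves each $H$-eigenslice of $U(\mf u^-)$ (since $\alpha(H) = 0$); each such slice is finite-dimensional, and on it $C^x$ is nilpotent because iterating shifts weights by $\alpha$ eventually out of the slice. Expanding $(C^x + (x - \zeta(x)))^N m = 0$ via the binomial theorem, and then iterating on the kernel filtration of $C^x$ (using that $A := (x - \zeta(x))$ commutes with $C^x$, so successive applications of powers of $C^x$ peel off the Jordan blocks), extracts $(x - \zeta(x))^{N'} w_j = 0$ for all $j$ and some $N'$.

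The principal obstacle is step (ii) of surjectivity: the $C^x$-coupling mixes different $w_j$'s within each $H$-eigenslice, and one must disentangle them by a finite-dimensional linear-algebra argument that exploits the commutation of $C^x$ with the $\ov V$-action and the nilpotency of $C^x$. The $\alpha$-free hypothesis on $V$ is essential to ensure that $\Gamma_\zeta^{\mf l}(V)$ has the correct structure so that $\phi$ is a genuine isomorphism rather than degenerating. Compared with the basic-classical setting treated in \cite{C21, CC22} where $\mf l$ is purely even, the $\mf q(n)$ case must accommodate odd components of $\mf l$ which contribute additional terms in $C^x$; the $\mf l$-module identification $M^{\mf p}(V) = U(\mf u^-) \otimes V$ (via PBW, with $\mf l$ acting by adjoint on $U(\mf u^-)$) accommodates these uniformly and is what makes the slice-by-slice disentanglement go through.
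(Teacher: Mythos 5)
Your strategy is genuinely different from the paper's. The paper establishes the natural inclusion $M^{\mf p}(\Gamma_\zeta^{\mf l}(V))\cong U(\mf u^-)\Gamma_\zeta^{\mf l}(V)\hookrightarrow \Gamma_\zeta(M^{\mf p}(V))$ and then proves it is an isomorphism by a Grothendieck-group computation: it restricts both sides to $\mf l_\zeta$, uses the $\alpha$-freeness of $V$ to write $\Res^{\mf l}_{\mf l_\zeta}V$ as having an $\mf l_\zeta$-Verma flag, and then reduces to Backelin's original even-case commutativity \cite[Proposition 6.9]{B97} together with \cite[Lemma 5.12]{MS97}. Your proposal instead tries to prove surjectivity element by element via the PBW decomposition, with no appeal to the even-case Backelin result. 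Your construction of $\phi$ and your injectivity argument are fine (and in fact make explicit what the paper leaves implicit in asserting the "natural inclusion").

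The gap is in step (i) of your surjectivity argument. You assert that the $\mf u_\oa$-local nilpotence of $m$ forces the $H$-support of $m$ to be bounded below. But for $x\in\mf u_\oa$ of $\h_\oa$-weight $\alpha$ with $\alpha(H)>0$, the operator $x^N$ shifts the $H$-eigenvalue by $N\alpha(H)$, so the condition $x^N m=0$ decouples over the $H$-eigenslices: writing $m=\sum_c m_c$, one gets $x^N m_c=0$ for each $c$ separately. This gives no coupling between slices and therefore no a priori bound on the support; in general $\ker x^N$ has unbounded $H$-support inside $\ov{M^{\mf p}(V)}$. The correct statement — that every element of $\Gamma_\zeta(M^{\mf p}(V))$ has finitely many $H$-eigenvalues — comes from the fact that $\Gamma_\zeta$ lands in $\mc N(\zeta)$, which is precisely the nontrivial content of the super analogue of Backelin's theorem \cite[Theorem 20]{C21} and is established by structural arguments, not by the elementary nilpotence observation. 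Relatedly, your proof never actually uses the $\alpha$-freeness hypothesis: the paper needs it to ensure $\Res^{\mf l}_{\mf l_\zeta}V$ has a Verma flag, which feeds into its Grothendieck-group reduction, and a proof that silently bypasses the hypothesis should raise a flag. If you grant that $\Gamma_\zeta(M^{\mf p}(V))\in\mc N(\zeta)$ and hence has bounded per-element $H$-support (citing the established result rather than re-deriving it), then your step (i) becomes legitimate, and your step (ii) — the slice-by-slice Jordan/binomial disentanglement using the nilpotency of $C^x$ on each finite-dimensional $H$-slice of $U(\mf u^-)$ and the commutativity of $C^x\otimes 1$ with $1\otimes(x-\zeta(x))$ — is plausible, though it would need to be written out carefully because the $j$-support is not $C^x$-stable and must first be replaced by its $\operatorname{ad}x$-closure within each slice.
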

\begin{proof}
	In the proof, we may identify $\Gamma_\zeta^{\mf l}(V)$ as the $\mf l$-submodule $1\otimes \Gamma_\zeta^{\mf l}(V)$ of $M^{\mf p}(\Gamma_\zeta^{\mf l}(V))$.  Then there is a  natural inclusion of $\g$-modules $M^{\mf p}(\Gamma_\zeta^{\mf l}(V))  \cong U(\mf u^-)\Gamma_\zeta^{\mf l}(V) \hookrightarrow \Gamma_\zeta(M^{\mf p}(V))$. We claim that this is indeed an isomorphism.
	
	We may assume that $V$ is indecomposable.  Recall the grading operator $H\in \h_\oa$ from Subsection \ref{sect::232}. For any  $X\in \mc N(\zeta)$, let  $X=\oplus_{c\in \C} X_c$ be the decomposition of $X$ into generalized $H$-eigenspaces, where $X_c$ corresponds to the eigenvalue $c$ under action of $H$. Since each $X_c$ is an $\mf l$-module lying in $\mc N_{\mf l}(\zeta)$,  it suffices to show that $\Gamma_\zeta(M^{\mf p}(V))_c$ and $M^{\mf p}(\Gamma_\zeta^{\mf l}(V))_c$ have the same composition factors for all $c \in \mathbb{C}$.
	
	We  let $[\mc N_{\mf s}(\zeta)]$ denote the Grothendieck group of $\mc N_{\mf s}(\zeta)$, for $\mf s= \g_\oa, \mf l_\zeta$. For $X\in \mc N_{\mf s}(\zeta)$, the expression $[X]$ denotes the corresponding element in $[\mc N_{\mf s}(\zeta)]$. By assumption, $\Res_{\mf l_\zeta}^{\mf l}V$ admits a $\mf l_\zeta$-Verma flag with consecutive subquotients isomorphic to  \[M_{\mf l_\zeta}(\la_1),\ldots, M_{\mf l_\zeta}(\la_k),\] where   $M_{\mf l_\zeta}(\la_i)$ denotes the $\mf l_\zeta$-Verma module of highest weight $\la_i$. Furthermore, we set $E: = U(\mf u_\ob^{-})$, viewed as an $\mf l_\zeta$-submodule of $U(\g)$ via the adjoint action, and denote its set of weights by $\Phi(E)$. We have the following identities  in $[\mc N_{\g_\oa}(\zeta)]$:
	$$\begin{array}{rcl}
		[\Res \Gamma_\zeta M^{\mf p}(V)] & \overset{}{=}& [\Gamma_\zeta^{\g_\oa} \Res M^{\mf p}(V)] \\ &= &\sum_{i=1}^k [\Gamma_\zeta^{\g_\oa} \Ind_{\mf p_\oa}^{\mf g_\oa}(E\otimes M_{\mf l_\zeta}(\la_i))] \\
		&\overset{\text{by $\dim E< \infty$}}{=}&\sum_{i=1}^k \sum_{\gamma\in \Phi(E)}[\Gamma_{\zeta}^{\g_\oa} \Ind_{\mf p_\oa}^{\mf g_\oa} M_{\mf l_\zeta}(\la_i+\gamma)]\\
		&\overset{\text{by \cite[Proposition 6.9]{B97}}}{=}&\sum_{i=1}^k\sum_{\gamma \in \Phi(E)}[M_0(\la_i +\gamma,\zeta)]. 
	\end{array}$$

On the other hand, we observe that $\Res^\g_{\mf l_\zeta} M^{\mf p}(\Gamma_\zeta^{\mf l}(V))\cong U(\mf u_\oa^-)\otimes E\otimes \Gamma_\zeta^{\mf l}(V).$ By  \cite[Proposition 6.9]{B97} and \cite[Lemma 5.12]{MS97}, it follows that   $$[E\otimes \Gamma_\zeta^{\mf l}(V)] =  \sum_{i=1}^k [E\otimes M_{\mf l_\zeta}(\la_i, \zeta)] = \sum_{i=1}^k  \sum_{\gamma\in \Phi(E)} [M_{\mf l_\zeta}(\la_i+\gamma, \zeta)] \hskip0.1cm \text{ in $[\mc N_{\mf l_\zeta}(\zeta)]$.}$$

For any $c\in \C$ and $A\in \mc N_{\mf l_\zeta}(\zeta)$,  let $[A]_c$ denote the element in $[\mc N_{\mf l_\zeta}(\zeta)]$ corresponding to  the $\mf l_\zeta$-module $A_c$. The above calculations imply the following equalities of elements in $[\mc N_{\mf l_\zeta}(\zeta)]$:
$$\begin{array}{rcl}
 [\Res_{\mf l_\zeta}^{\g} M^{\mf p}(\Gamma_\zeta^{\mf l} (V))]_c & \overset{}{=} &  \sum_{i=1}^k \sum_{\gamma\in \Phi(E)} [U(\mf u_\oa^-)\otimes M_{\mf l_\zeta}(\la_i +\gamma, \zeta)]_c \\  &=& \sum_{i=1}^k \sum_{\gamma\in \Phi(E)}[M_0(\la_i+\gamma,\zeta)]_c \\
 & = & [\Res_{\mf l_\zeta}^\g \Gamma_\zeta(M^{\mf p}(V))]_c.
	\end{array}$$
This completes the proof.
\end{proof}

Recall that $\nu \in \h^\ast_\oa$ is dominant weight such that its stabilizer subgroup is $W_{\mf l_\zeta}$ under the dot-action of $W$. For any $\la\in \Lambda(\nu)$, the Whittaker $\mf l$-module $\Gamma_\zeta^{\mathfrak{l}}(L_{\mathfrak{l}}(\lambda))$ is known to be simple by \cite[Theorem 33]{CC23_2}.  We provide some further details regarding this irreducible module. According to the decomposition \eqref{eq::Levi} of $\mf l$ into a direct sum of ideals $\bigoplus_{i=1}^k \mathfrak{q}(n_i)$, the simple highest weight module  $L_{\mf l}(\la)$ factorizes as $L_{\mf l}(\la) = V_1\widehat{\otimes} \cdots \widehat{\otimes}V_k$. Here, each $V_i$   is a simple highest weight module over the $i$-th summand $\mf q(n_i)$, for   $1\leq i\leq k$.  It follows that the Backelin functor $\Gamma_\zeta^{\mf l}$ acts component-wise: \begin{align} \Gamma_\zeta^{\mf l}(L_{\mf l}(\la)) = \Gamma_\zeta^{\mf q(n_1)}(V_1)~\widehat{\otimes}~\cdots \widehat{\otimes}~\Gamma_\zeta^{\mf q(n_k)}(V_k), \label{eq::sameType} \end{align} where each $\Gamma^{\mf q(n_i)}_\zeta$ is a $\mf q(n_i)$ analogue of the Backelin functor. Furthermore,  $\Gamma_{\zeta}^{\mf l}(L_{\mf l}(\la))$ and $L_{\mf l}(\la)$ share the same type (Type \texttt{M} or \texttt{Q}). This follows from the fact that these two modules have identical annihilator ideals by \cite[Corollary 34]{CC23_2}, which, as shown in \cite[Lemma 6]{CCS25}, uniquely determine the type of a simple module over a finite-dimensional complex Lie superalgebra.

We define the queer analogue of standard Whittaker modules as follows:
\begin{align}
&M(\la,\zeta): = M^{\mf p}(\Gamma_\zeta^{\mf l}(L_{\mf l}(\la))), \text{ for }\la \in \Lambda(\nu),\label{eq::stdWhittaker}
\end{align} and let $L(\la,\zeta)$ be the unique simple quotient of $M(\la,\zeta)$. Let $\mc N(\zeta)_{\nu+\Lambda}$ be the Serre subcategory of $\mathcal{N}(\zeta)$ generated by composition factors of modules
of the form $\Gamma_\zeta(L(\la))$, for $\la\in \Lambda(\nu)$.  By the proof of \cite[Lemma 3]{CC23_2}, this category coincides with  the   Serre subcategory of $\mathcal{N}(\zeta)$ generated by simple objects $S\in \mc N(\zeta)$ whose restriction $\Res S$ has the $\g_\oa$-central character coinciding with the $\g_\oa$-central character of $L_{\g_\oa}(\la)$, with $\la \in \nu+\Lambda$, where $L_{\g_\oa}(\la)$ denotes the simple highest weight module over $\g_\oa$ of highest weight $\la$.
The following theorem justifies the above notation. 
\begin{thm} \label{eq::compostadWhi} Let $\zeta\in \ch\mf n_\oa$. 
The set $\{L(\la,\zeta)|~\la \in \Lambda(\nu)\}$ is an exhaustive list of
	mutually non-isomorphic simple Whittaker modules in $\mc N(\zeta)_{\nu+\Lambda}$.
    Furthermore, for any $\la, \mu \in \Lambda(\nu)$, we have
	\begin{align}\label{eq::13inThm}
	&[M(\la,\zeta): L(\mu, \zeta)] = [M^{\mf p}(\la): L(\mu)]. \end{align}
    \end{thm}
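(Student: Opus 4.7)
The plan is to deduce both assertions by transporting, via the Backelin functor $\Gamma_\zeta\colon\mc O\to \mc N(\zeta)$, the known classification and composition multiplicities in $\mc O$ to the Whittaker side. The two technical inputs I would establish up front are: (a) $\Gamma_\zeta$ is exact on $\mc O$; and (b) for each $\mu\in \nu+\Lambda$, one has $\Gamma_\zeta(L(\mu))\cong L(\mu,\zeta)$ if $\mu\in \Lambda(\nu)$, while $\Gamma_\zeta(L(\mu))=0$ otherwise. Property (a) can be obtained by transcribing Backelin's original exactness argument \cite[Section~3]{B97} into the super setting, following \cite{C21,CC23_2}. For (b), Proposition~\ref{prop::bacpara} identifies $\Gamma_\zeta(M^{\mf p}(\mu))$ with $M(\mu,\zeta)$, so $\Gamma_\zeta(L(\mu))$ is a quotient of $M(\mu,\zeta)$; arguing with the $H$-eigenspace decomposition as in the proof of Lemma~\ref{lem::simpletop9} shows that any nonzero such quotient is forced to be the unique simple top $L(\mu,\zeta)$. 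Nonvanishing for $\mu\in \Lambda(\nu)$ then reduces, through the factorization \eqref{eq::sameType}, to the fact that $\Gamma_\zeta^{\mf l}(L_{\mf l}(\mu))$ is a nonzero simple $\mf l$-module by \cite[Theorem~33]{CC23_2}; vanishing for $\mu\notin \Lambda(\nu)$ follows by observing that $\Res\Gamma_\zeta(L(\mu))$ embeds into $\Gamma_\zeta^{\g_\oa}(\Res L(\mu))$ and invoking the classical vanishing result of Backelin \cite{B97}.

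Granting (a) and (b), the classification of simples in $\mc N(\zeta)_{\nu+\Lambda}$ is immediate: by the very definition recalled just before the theorem, this category is the Serre subcategory generated by the composition factors of $\Gamma_\zeta(L(\la))$, $\la\in \Lambda(\nu)$, which by (b) are precisely the simple modules $L(\la,\zeta)$. Pairwise non-isomorphism then follows from Remark~\ref{rem::8} combined with the injectivity of the assignment $\la\mapsto \Gamma_\zeta^{\mf l}(L_{\mf l}(\la))$ on $\Lambda(\nu)$ provided by \cite[Theorem~33]{CC23_2}, together with Theorem~\ref{cls::thm}.

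For the multiplicity formula \eqref{eq::13inThm}, I would take a Jordan--H\"older filtration of $M^{\mf p}(\la)$ in $\mc O$, apply the exact functor $\Gamma_\zeta$, and use Proposition~\ref{prop::bacpara} to identify $\Gamma_\zeta(M^{\mf p}(\la))\cong M(\la,\zeta)$. The induced filtration on $M(\la,\zeta)$ has subquotients of the form $\Gamma_\zeta(L(\mu))$ which, by (b), are either isomorphic to $L(\mu,\zeta)$ (when $\mu\in \Lambda(\nu)$) or zero. Counting the surviving subquotients yields the claimed equality.

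The main obstacle will be item (b) in the queer setting, where the Backelin functor must be compatible with the type (\texttt{M} versus \texttt{Q}) of the simple $\mf l$-summands entering through \eqref{eq::sameType}, and with the odd part of $\mf l$ that is absent in the basic classical case treated in earlier work. I would handle this uniformly by working inside the $H$-eigenspace decomposition of $M^{\mf p}(\mu)$ as in Lemma~\ref{lem::simpletop9}, which isolates the top $\mf l$-component and reduces the potentially type-sensitive computations to the already established queer statements for the Levi $\mf l$.
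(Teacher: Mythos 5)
Your proposal takes essentially the same route as the paper: apply the exact functor $\Gamma_\zeta$ to $M^{\mf p}(\la)\twoheadrightarrow L(\la)$, use Proposition~\ref{prop::bacpara} to identify $\Gamma_\zeta(M^{\mf p}(\la))\cong M(\la,\zeta)$, invoke Lemma~\ref{lem::simpletop9} for the unique simple top, and deduce both the classification and the multiplicity identity from the behaviour of $\Gamma_\zeta$ on simples as established in \cite[Theorem~33]{CC23_2}. The only real difference is cosmetic: you propose re-deriving from scratch some facts (exactness, the vanishing $\Gamma_\zeta(L(\mu))=0$ for $\mu\notin\Lambda(\nu)$, and simplicity of $\Gamma_\zeta(L(\mu))$) that the paper simply cites from \cite[Theorem~33]{CC23_2}, so your argument is a correct but more verbose unpacking of the same proof.
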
 \begin{proof}
Applying the exact functor  $\Gamma_\zeta$ to the canonical quotient $M^{\mf p}(\la)\twoheadrightarrow L(\la)$, we obtain an epimorphism $M(\la,\zeta)\twoheadrightarrow \Gamma_\zeta(L(\la))$ by Proposition \ref{prop::bacpara}. According to Lemma~\ref{lem::simpletop9}, the module $M(\la,\zeta)$ has a unique simple quotient. This, together with \cite[Theorem 33]{CC23_2}, implies  that $L(\la,\zeta)\cong \Gamma_\zeta(L(\la))$.

The first assertion now follows from \cite[Theorem 33]{CC23_2}, and Equation \eqref{eq::13inThm} follows from the exactness of the functor $\Gamma_\zeta$.
\end{proof}

Theorem \ref{eq::compostadWhi} reduces the computation of the composition multiplicities $[M(\la,\zeta):L(\mu,\zeta)]$ to that of $[M^{\mf p}(\la):L(\mu)]$, i.e., the composition multiplicities of parabolic Verma modules for the queer Lie superalgebras. They are obtained from the usual composition multiplicities of Verma modules in the BGG category of queer Lie superalgebras as follows. We have the following identitiy in the Grothendieck group of BGG category of $\mf l$-modules:
 \[
 [L_{\mf l}(\la)]=\sum_{\gamma}a_{\gamma\la}[M_{\mf l}(\gamma)].
 \]
 On the other hand, we have the following identity in the Grothendieck group of BGG category of $\g$-modules:
\[
[M(\gamma)]=\sum_{\mu}b_{\mu\gamma} [L(\mu)].
\]
Therefore, we have
\begin{align*}
   [M^{\mf p}(\la)]&=[\Ind_{\mf p}^\g L_{\mf l}(\la)]=\sum_{\gamma} a_{\gamma\la}[\Ind_{{{\mf p}}}^{{\g}} M_{\mf l}(\gamma)]= \sum_{\gamma} a_{\gamma\la}[{M}(\gamma)]\\
   &=\sum_{\gamma} a_{\gamma\la}\sum_{\mu}b_{\mu\gamma} [L(\mu)] = \sum_{\mu} \left(\sum_{\gamma}a_{\gamma\la}b_{\mu\gamma}\right)[L(\mu)].
\end{align*}
Thus, $[M^{\mf p}(\la):L(\mu)]=\sum_{\gamma}a_{\gamma\la}b_{\mu\gamma}$.
Since $\g$ is a queer Lie superalgebra and $\mf l$ is a direct sum of queer Lie superalgebras, the integers $a_{\la\mu}\in\Z$ and $b_{\mu\gamma}\in\N$ can be computed by composition multiplicities for the queer Lie superalgebras in the BGG category.

It follows from \cite{Ch16} that the composition multiplicity problem above can be reduced to the problem for three specific types of weights, namely weights of the form $s+\Z$ with $s\in\Z$, $s\not\in\hf\Z$ and $s\in\hf+\Z$. In \cite{CKW17}, it was conjectured that the composition multiplicities in the latter two cases, i.e., the case when $s\not\in\Z$, are computed by Lusztig's canonical and Webster's orthodox basis of quantum groups of type $A$ and $C$, respectively. These conjectures have been subsequently established in \cite{BD17, BD19}.

\subsection{The target category of Backelin's functor}\label{sec::CatW}
Let $\mc W_\nu(\zeta)$ denote the image  category  of $\Gamma_\zeta: \mc O_{\nu+\Lambda}\rightarrow \mc N(\zeta)$.
By \cite[Theorem 33]{CC23_2} the  functor $\Gamma_\zeta: \mc O_{\nu+\Lambda}\rightarrow \mc W_\nu(\zeta)$ satisfies the universal property of a Serre  quotient functor in the sense of \cite[Corollaries II.1.2 and III.1.3]{Ga62}. Therefore,  we have equivalences  $$\mc W_\nu(\zeta)\cong \ov{\mc O}_{\nu+\Lambda}\cong \mc O^{\nu\text{-pres}}.$$ In particular, this implies that $\mc W_\nu(\zeta)$ possesses the BGG-type reciprocity described in Proposition \ref{prop::BGGforpres}.  The goal of this subsection is to give a precise description of objects  in the Whittaker category $\mc W_\nu(\zeta)$.  Recall that the Backelin functor $\Gamma_\zeta$  (functorially) commutes with both the functors of tensoring with finite-dimensional weight modules and the induction functor $\Ind$. This implies that  $\mc W_\nu(\zeta)$ contains all  Whittaker modules of the form $E\otimes \Ind M_0(\nu, \zeta)$, where $E$ are  finite-dimensional weight $\g$-modules.

\subsubsection{Action of the center} \label{sect::actofS}
This subsection is devoted to developing a deformation of the full BGG category $\mc O$. Let $\hat{S}$ be  the completion of the symmetric algebra $S= S(\mf h_\oa)$ of $\mf h_\oa$ at the maximal ideal generated by  $\mf h_\oa$ and $W_\zeta = W_{\mf l_\zeta}$ be the Weyl group of $\mf l_\zeta$.  Consider the subalgebra $\hat{S}^{W_\zeta}$ consisting of all invariants of $\hat S$ under the action of    $W_\zeta$. 
Following the framework of Mili{\v{c}}i{\'c} and Soergel \cite[Sections 4,5]{MS97} and its generalization to the super case in  \cite[Section 4.1]{C21}, there exists a ring homomorphism $\vartheta: \hat{S}^{W_\zeta} \rightarrow \End(\text{Id}_{\mc N(\zeta)})$, where $\text{Id}_{\mc N(\zeta)}$ is the identity functor of $\mc N(\zeta)$. The construction of $\vartheta$ is recalled below.

We now construct the action of $\hat{S}^{W_\zeta}$ on $\mc N(\zeta)$ as follows.  Let $Z = Z(\mf l_\zeta)$ be the center of $U(\mf l_\zeta)$. For any central character $\chi_\la: Z\rightarrow \C$ associated with a  weight $\la\in \h^\ast_\oa$, we let $\hat{Z}_\la$ denote the completion of $Z$ at $\ker(\chi_\la)$. Let $\hat{S}_\la$ be  the completion of $S$ at the maximal ideal generated by  $\{h-\la(h)|~h\in \mf h_\oa\}$, for $\la \in \h_\oa^\ast$.  Then the Harish-Chandra homomorphism $Z\rightarrow S(\h_\oa)$ induces an inclusion $\hat Z_{\chi_\la} \hookrightarrow \hat{S}_\la$. Composing this inclusion with the translation by $\la$, we get an inclusion $  \hat Z_{\chi_\la} \hookrightarrow \hat{S}$ containing  $\hat{S}^{W_\zeta}$ as a subalgebra.

Note that $\mc N(\zeta) =\bigoplus_{\la\in \h^\ast_\oa} \mc N(\zeta)_{\chi_\la}$ decomposes into full subcategories $\mc N(\zeta)_{\chi_\la}$ consisting of objects $M$ in $\mc N(\zeta)$ on which $z-\chi_\la(z)$ acts locally nilpotently, for  $z\in Z$.  Therefore $\hat{Z}_\la$ acts on modules in $\mc N(\zeta)_{\chi_\la}$, for each $\la \in \h_\oa^\ast$, and this allows us to  define,  
for $M\in \mc N(\zeta)$,  an algebra homomorphism $\vartheta_M: \hat{S}^{W_\zeta}\rightarrow \End_{\g_\oa}(\Res M)$. 
By \cite[Proposition 11]{C21},  we have  $\vartheta_M(\hat{S}^{W_\zeta})\subseteq  \End_{\g}(M)$. Let $I_\zeta$ be the maximal ideal of $\hat{S}^{W_\zeta}$.  We define  the following full abelian subcategory of $\mc N(\zeta)$:
\[\widehat{\mc  W}(\zeta) =\{M\in \mc N(\zeta)|~\vartheta(I_\zeta)M=0\}.\]

\begin{lem} \label{eq::cathatW}
	For any $\zeta \in \ch\mf n_\oa$, the category $\widehat{\mc W}(\zeta)$ contains all both standard and simple objects in $\mc N(\zeta)$.
\end{lem}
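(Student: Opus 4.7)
Both the simple and the standard case will reduce to showing that $\vartheta_M$ factors through the residue field $\C$ of the complete local ring $(\hat S^{W_\zeta}, I_\zeta)$; since any ring homomorphism from $\hat S^{W_\zeta}$ to a field must annihilate the unique maximal ideal $I_\zeta$, this suffices.

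\textbf{Simple objects.} Let $L$ be a simple object in $\mc N(\zeta)$. By \cite[Proposition 11]{C21}, $\vartheta_L$ has image in $\End_\g(L)$. A super Schur's lemma (the type \texttt{M}/\texttt{Q} dichotomy already used in the proof of Proposition \ref{prop::LeviSimple}) forces $\End_\g(L)_\oa = \C$. Since $\hat S^{W_\zeta}$ is a commutative, purely even algebra, $\vartheta_L$ lands in $\End_\g(L)_\oa = \C$ and is thus a ring homomorphism to $\C$. Such a homomorphism kills $I_\zeta$, giving $\vartheta_L(I_\zeta)=0$.

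\textbf{Standard objects.} Let $M = M(\la,\zeta) = M^{\mf p}(V)$ with $V := \Gamma_\zeta^{\mf l}(L_{\mf l}(\la))$, which is a simple object of $\mc N_{\mf l}(\zeta)$ by the discussion preceding \eqref{eq::sameType}. My plan is to show $\End_\g(M^{\mf p}(V))_\oa = \C$ and then rerun the argument above. By adjunction, $\End_\g(M^{\mf p}(V)) \cong \Hom_{\mf p}(V, M^{\mf p}(V))$. The grading element $H \in \mf h_\oa$ from Subsection \ref{sect::232} is central in $\mf l$ and acts on $V$ by a single scalar $c$; since $\mf u^-$ lowers the $H$-weight strictly, $V$ is precisely the top $H$-weight space of $M^{\mf p}(V)$. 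Any $\mf p$-equivariant $f \colon V \to M^{\mf p}(V)$ is in particular $H$-equivariant, so $f(V) \subseteq V$; the $\mf u$-equivariance is automatic because $\mf u V = 0$ and $\mf u \cdot V \subseteq M^{\mf p}(V)_{>c} = 0$. Hence $\End_\g(M^{\mf p}(V)) \cong \End_{\mf l}(V)$, whose even part is $\C$ by Schur applied to the simple $\mf l$-module $V$, and the conclusion follows as before.

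\textbf{Anticipated difficulty.} The proof is conceptually simple but rests on two pillars: the super Schur lemma (ensuring $\End_\oa = \C$) and the computation of $\End_\g(M^{\mf p}(V))$ via adjunction and the $H$-grading. Both are standard. The only subtle point is to confirm that $\vartheta$ takes values in the $\g$-endomorphisms rather than merely $\g_\oa$-endomorphisms -- this is precisely the content of \cite[Proposition 11]{C21} -- after which both cases reduce to the uniform principle that a commutative-algebra map into a field annihilates the maximal ideal.
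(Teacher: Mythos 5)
Your proof is correct and takes a genuinely different route from the paper's. The paper argues by realizing the standard object as a quotient of $\Ind M_0(\la,\zeta)$ (via the Backelin functor and \cite[Lemma~3]{CC23_2}) and then invoking \cite[Theorem~4.1]{MS97} to see that $\vartheta(I_\zeta)$ annihilates that inducing module; $\widehat{\mc W}(\zeta)$ is closed under quotients, so this finishes. You instead compute the relevant endomorphism superalgebra directly: for a simple $L\in\mc N(\zeta)$ super Schur gives $\End_{\mf g}(L)_\oa=\C$, and for a standard object $M^{\mf p}(V)$ you use adjunction together with the $H$-eigenspace grading (with $V$ sitting in the top eigenspace $c$) to get $\End_{\mf g}(M^{\mf p}(V))\cong\End_{\mf l}(V)$, whose even part is again $\C$. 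Since $\vartheta_M$ is a unital $\C$-algebra map into a purely even target, it lands in $\End(-)_\oa=\C$, and because $\hat{S}^{W_\zeta}$ is local the kernel must be $I_\zeta$. Your argument is more self-contained and elementary: it needs neither the Backelin functor nor the descent to the $\mf g_\oa$-result of Mili{\v{c}}i{\'c}--Soergel, only Schur's lemma and the $H$-weight computation of $\End_{\mf g}(M^{\mf p}(V))$. What the paper's route buys is that it reuses the machinery (the Backelin functor, realization via $\Ind M_0(\la,\zeta)$) that is already needed elsewhere in the section, making the lemma essentially a corollary of prior work rather than a fresh computation. One small caveat worth making explicit: you should say \emph{unital $\C$-algebra homomorphism} rather than just ring homomorphism, so that $\vartheta_M(1)=\id$ forces surjectivity onto $\C$ and hence $\ker\vartheta_M$ is maximal; and you implicitly use that $\hat{S}^{W_\zeta}$ is local, a fact the paper asserts when calling $I_\zeta$ ``the'' maximal ideal.
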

\begin{proof} It suffices to show that $M^{\mf p}(V)\in \widehat{\mc W}(\zeta)$, for any $V\in \text{Irr}\mc N_{{\mf l}}(\zeta)$. Assume first  that $\mf l = \mf q(n)$. By \cite[Lemma 3]{CC23_2},  $M^{\mf p}(V) =V$ is a quotient of $\Gamma_\zeta({L(\la)})$, for some {$\la\in \h^\ast_\oa$}. This implies that  $M^{\mf p}(V)$ is  a quotient of $\Ind M_0(\la,\zeta)$, for some $\la \in \h^\ast_\oa$. Since $\vartheta(I_\la)$ acts on $M_0(\la,\zeta)$ as zero, the conclusion now follows by \cite[Theorem~4.1]{MS97}.
\end{proof}

\begin{ex} When $\zeta =0$, the category    $\widehat{\mc W}(0)$ consists of  weight modules in $\mc N(0)$. In this case, $\widehat{\mc W}(0)$ coincides with the full BGG category $\mc O$.
\end{ex}

\subsubsection{Description of objects in $\mc W_\nu(\zeta)$}
Fix a nilcharacter  $\zeta\in \ch\mf n_\oa$.
In the following proposition, we provide a description  of the objects in $\mc W_\nu(\zeta)$. In particular, we show that $\mc W_\nu(\zeta)$ is   the smallest full abelian subcategory of $\g\Mod$ that contains $\Ind M_0(\nu,\zeta)$ and is stable under the action of projective functors.
\begin{prop}
 Let $M \in \mc N(\zeta)$. Then the following are equivalent:
 \begin{enumerate}
 	\item[(a)] $M\in \mc W_\nu(\zeta)$.
 	\item[(b)] $M\in \widehat{\mc W}(\zeta)\bigcap \bigoplus_{\la \in \nu+\Lambda} \mc N(\zeta)_{\chi_\la}.$
 	\item[(c)] $M$ is isomorphic to a subquotient of a module of the form $E\otimes \Ind M_0(\nu,\zeta)$, for some finite-dimensional weight $\g$-module $E$.
 \end{enumerate}

\end{prop}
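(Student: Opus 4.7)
The plan is to establish the cycle (a)$\Rightarrow$(c)$\Rightarrow$(b)$\Rightarrow$(a). For (a)$\Rightarrow$(c), start from the standard fact that every object of $\mc O_{\nu+\Lambda}$ is a subquotient of $E\otimes \Ind M_{\g_\oa}(\nu)$ for some finite-dimensional weight $\g$-module $E$, via the action of projective functors on this block. Then apply the exact functor $\Gamma_\zeta$ and invoke the two commutation identities $\Gamma_\zeta(E\otimes X)=E\otimes \Gamma_\zeta(X)$ and $\Gamma_\zeta(\Ind X) = \Ind \Gamma_\zeta^{\g_\oa}(X)$ recalled in the excerpt, together with Backelin's formula $\Gamma_\zeta^{\g_\oa}(M_{\g_\oa}(\nu)) = M_0(\nu,\zeta)$. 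This realizes $M = \Gamma_\zeta(N)$ as a subquotient of $E\otimes \Ind M_0(\nu,\zeta)$.

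For (c)$\Rightarrow$(b), the central character condition is transparent: $\Ind M_0(\nu,\zeta)$ has $\g_\oa$-central character $\chi_\nu$, and tensoring with $E$ only shifts this by weights of $E$, which lie in $\Lambda$. For membership in $\widehat{\mc W}(\zeta)$, apply Lemma \ref{eq::cathatW} to $\Ind M_0(\nu,\zeta)$ itself, and verify that the defining vanishing condition $\vartheta(I_\zeta)=0$ passes to subquotients (automatic from its form) and to tensor products with finite-dimensional weight modules; the latter amounts to checking compatibility of $\vartheta$ with the action of projective functors on $\mc N(\zeta)$, which is built into the construction of $\vartheta$ via the center $Z(\g_\oa)$.

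For (b)$\Rightarrow$(a), first reduce to simple modules via induction on composition length; this requires closure of $\mc W_\nu(\zeta)$ under extensions in $\mc N(\zeta)$, which should follow from exactness of $\Gamma_\zeta$ and the fact that extensions in the target of a Serre quotient functor lift to the source modulo the kernel $\mc I_\zeta(\nu)$. For simple $M$, Theorem \ref{cls::thm} yields $M\cong L(V)$ for some $V\in \rm{Irr}\mc N_{\mf l}(\zeta)$. The central character condition together with $\vartheta(I_\zeta) M = 0$ then force $V\cong \Gamma_\zeta^{\mf l}(L_{\mf l}(\la))$ for some $\la\in \Lambda(\nu)$, whence $M\cong L(\la,\zeta) = \Gamma_\zeta(L(\la))\in \mc W_\nu(\zeta)$.

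The hard part will be this last step. Two subtleties arise: closure of $\mc W_\nu(\zeta)$ under extensions demands a careful lifting argument from the Serre-quotient presentation of $\Gamma_\zeta$; and identifying a simple $L(V)$ with $\vartheta(I_\zeta)L(V) = 0$ as $L(\la,\zeta)$ with $\la\in \Lambda(\nu)$ requires tracing $\vartheta$ through the Harish-Chandra map for $\mf l_\zeta$ and matching it against Proposition \ref{prop::LeviSimple} and the factorization \eqref{eq::sameType} of $\Gamma_\zeta^{\mf l}$ into queer factors.
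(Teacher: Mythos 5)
Your cycle $(a)\Rightarrow(c)\Rightarrow(b)\Rightarrow(a)$ is structured differently from the paper, which establishes $(a)\Leftrightarrow(b)$ in one stroke and then derives $(a)\Leftrightarrow(c)$. The paper's $(a)\Leftrightarrow(c)$ goes through the characterization, via \cite[Theorem 33]{CC23_2}, of $\mc W_\nu(\zeta)$ as the category of $M\in\mc N(\zeta)$ admitting a two-step presentation by direct summands of $E\otimes\Ind M_0(\nu,\zeta)$; this gives $(a)\Rightarrow(c)$ immediately, and the reverse uses that $\mc W_\nu(\zeta)$ is a full abelian subcategory of $\mc N(\zeta)$ (so it is closed under subquotients). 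Your $(a)\Rightarrow(c)$ via projective functors on $\mc O_{\nu+\Lambda}$ and the commutation identities for $\Gamma_\zeta$ is a valid alternative path. But the decisive step $(a)\Leftrightarrow(b)$ in the paper is obtained by invoking \cite[Theorem 4.1]{MS97} and \cite[Theorem 16]{C21}, which is exactly the content you have not supplied.

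This is where the genuine gap lies, and you correctly flag it yourself as ``the hard part.'' For $(b)\Rightarrow(a)$ you propose an induction on composition length, which requires that an extension $0\to A\to M\to C\to 0$ with $A,C\in\mc W_\nu(\zeta)$ and $M$ satisfying (b) be again in $\mc W_\nu(\zeta)$. Note that $\mc W_\nu(\zeta)$ is \emph{not} closed under arbitrary extensions in $\mc N(\zeta)$: an extension of two objects with $\vartheta(I_\zeta)=0$ need only satisfy $\vartheta(I_\zeta)^2=0$, exactly as in the analogous phenomenon for central characters. So the induction hinges precisely on showing that all extensions \emph{inside} $\widehat{\mc W}(\zeta)\cap\bigoplus_{\la}\mc N(\zeta)_{\chi_\la}$ lie in $\mc W_\nu(\zeta)$, i.e.~that the relevant $\Ext^1$-groups agree. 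This is not a formal consequence of $\Gamma_\zeta$ being a Serre quotient functor (Serre quotients typically create new extensions rather than merely preserving them), and it is precisely what \cite[Theorem 4.1]{MS97} (and its super analogue \cite[Theorem 16]{C21}) supplies via the completed-center description. Your sketch does not replace that input, so $(b)\Rightarrow(a)$ remains unproven. A secondary issue: in the base case you argue that ``$\vartheta(I_\zeta)M=0$ forces $V\cong\Gamma_\zeta^{\mf l}(L_{\mf l}(\la))$,'' but Lemma \ref{eq::cathatW} already shows $\vartheta(I_\zeta)$ annihilates \emph{every} simple object, so this condition carries no information for simples; the real content for simples comes only from the block (central-character) condition, which then needs to be matched against the $\mf l_\zeta$-central-character decomposition used to define $\mc N(\zeta)_{\chi_\la}$ versus the $\g_\oa$-central-character description of $\mc N(\zeta)_{\nu+\Lambda}$ -- a compatibility you should not leave implicit.
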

\begin{proof} By \cite[Theorem 33]{CC23_2}, $\mc W_\nu(\zeta)$ consists exactly of modules $M\in \mc N(\zeta)$ that admits a two-step resolution of the form $X\rightarrow Y\rightarrow M\rightarrow 0$, where $X,Y$ are   summands of modules of the form $E\otimes \Ind M_0(\nu,\zeta)$. 
	By  \cite[Theorem 4.1]{MS97} and \cite[Theorem 16]{C21},  we have the equivalence $(a)\Leftrightarrow (b)$.  This also proves that $\mc W_\nu(\zeta)$ is a full abelian subcategory of $\mc N(\zeta)$ by the construction of $\widehat{\mc W}_\nu(\zeta)$.
	
	By the above argument, we have  $(a)\Leftrightarrow (c)$. Since $E\otimes \Ind M_0(\nu,\zeta)$ is an object in the full abelian subcategory  $\mc W_\nu (\zeta)$, so are its subquotients. This completes the proof.
\end{proof}

\section{Finite $W$-superalgebras} 
In this section, we formulate the finite $W$-algebra $U(\g,E)$ for $\g= \mf q(n)$ associated with an odd nilpotent element $E\in \g_\ob$. Subsequently, we set up the Losev--Shu--Xiao decomposition and provide  descriptions of the BGG-type categories of $U(\mf g, E)$ introduced by \cite{BGK08}. We then establish several equivalences between the categories  of Whittaker $\mf q(n)$-modules and these BGK categories of $U(\mf g,E)$. Finally, by applying the results  in the prior sections, we obtain a reduction of the multiplicity problem for Verma modules for  $U(\mf g,E)$ to that of Verma modules for $\g$.
\label{sect::Walg}
\subsection{Good $\Z$-gradings} 
Let $\chi \in \g_\oa^\ast$ be a nilpotent linear functional in the coadjoint representation of the reductive algebraic group of $\g_\oa$. We can always regard $\chi \in \g^\ast$ by declaring that $\chi(\g_\ob)=0$. Let $\g^\chi$ be the centralizer of $\chi$ in $\g$, that is, \[\g^\chi = \{x\in \g|~\chi([x,\g ])=0\}.\] Recall that $\mf z(\g)$ denotes the center of $\g$. Following \cite{Zh14},    a {\em good} $\Z$-grading for  $\chi$ is a $\Z$-grading  of $\g$ $$\Gamma: \g= \bigoplus_{i\in \Z} \g(i)$$ such that   the following conditions  hold:
\begin{itemize}\item[(a)]   $\mf z(\g) \subseteq \g(0)$;
	\item[(b)] $\chi(\g(i))=0$, unless $i=-2$;
	\item[(c)] $\g^\chi \subseteq \bigoplus_{i\geq 0}\g(i)$.
\end{itemize}

Define the odd trace form
$$\text{otr}({}_-): \mf q(n)\rightarrow\C,\hskip0.2cm\left( \begin{array}{cc} A & B\\
	B & A\\
\end{array} \right)\mapsto \text{tr}(B),$$ where $\text{tr}$ denotes  the usual trace of a matrix.  This gives rise to an {\em odd} non-degenerate $\g$-invariant supersymmetric bilinear form of $\g$:
\[(x|y): = \text{otr}(xy), \text{for x,y}\in \g, \]
which allows us to identify $\g$ with $\Pi(\g^*)$. Here we recall that $\Pi$ denotes the parity-reversing map of $\g$.

Let $\chi \in \g^\ast$ be an even nilpotent linear functional. Since $\g\cong \Pi(\g^\ast)$, it follows that  there is a unique odd nilpotent element $E\in \g_\ob$ determined by $\chi(\_) = (E|\_)$. We set  $$\g^E:= \g^\chi = \{x\in \g|~[E,x]=0\},~e:=\Pi E.$$ 
Recall that $\text{I}_{2n}\in \mf \g_\oa$ denotes the identity matrix of $\mf q(n)$. By \cite[Lemmas 2.1, 2.2]{Zh14}, $\Gamma$ is a good $\Z$-grading if and only if $\text{I}_{2n}\in \g(0)$ and  the restriction  $$\Gamma|_{\g_\oa}: \g_\oa =\bigoplus_{i\in \Z} \g_\oa(i)$$ of $\Gamma$ to $\g_\oa\cong \gl(n)$ is a good grading for $e$ in the sense of \cite{EK05}. The latter is known and given in terms of certain  combinatorial objects called pyramids \cite[Section 4]{EK05}. In this case, there is a semisimple element $h_\Gamma\in [\g_\oa, \g_\oa]$ such that $\g(i) = \{x\in \g|~[h_\Gamma,x] =ix\}$ and so   $\Pi\g(i) = \g(i)$, for all $i\in \Z$.

\subsection{Finite $W$-algebras for $\mf{q}(n)$} \label{sect::finiteWq}
Fix an even nilpotent linear functional $\chi \in \g^\ast$. Define the following (even) super-skewsymmetric bilinear form $$\omega_\chi(\_,\_): = \chi([\_,\_]): \g\times \g\rightarrow \C.$$
 Recall that   $E\in \g_\ob$ denotes  the element determined by $\chi(\_) = (E|\_)$ and $e =\Pi E$. We fix a good $\Z$-grading for $\chi$ as above.  By \cite[Proposition 2.4]{Zh14}, the adjoint action $\ad E$ restricts to a bijection from $\g(-1)$ to $\g(1)$. This leads to a non-degenerate super-symplectic bilinear form $\omega_\chi|_{\g(-1)}:\g(-1)\times \g(-1)\rightarrow \C$. In particular, we have $$\dim \g(-1)_\ob =\dim \Pi \g(-1)_\oa = \dim  \g(-1)_\oa \text{, which is even.}$$ Pick a $\Z_2$-graded Lagrangian subspace $l \subseteq \g(-1)$ with respect to $\omega_\chi$ and define the subalgebra \begin{align}
 &\mf m:=   l\oplus \bigoplus_{i\leq -2}\g(i). \label{eq::defm}
 \end{align} Then  $\chi$ restricts to a character on $\mf m$. We set
 \begin{align}
 &\mf m_\chi : = \{m-\chi(m)|~m\in \mf m\}, \label{eq::mchi}
 \end{align} and let $I_\chi$ be the left ideal generated by $\mf m_\chi$. Let $Q_\chi$ be the left $U(\g)$-module $U(\g)/I_\chi$. The finite $W$-algebra associated to $\g$ and $\chi$ is defined as the associative superalgebra
 \[ U(\g, E):=\End_{\g}(Q_\chi)^{\text{opp}}\cong\left(U(\g)/I_\chi\right)^{\ad\mf m}.\]
By definition, $Q_\chi$ is a $\left(U(\g),U(\g,E)\right)$-bimodule. 
Since $\dim\g(-1)$ is even, it follows by \cite[Theorems 3.6, 3.7]{Zh14}, that the definition of the finite $W$-superalgebra $U(\g, E)$ is independent of the choice of the Lagrangian subspace $\mf l$ and the good gradings $\Gamma$ for $\chi$, up to isomorphisms. Therefore, without loss of generality, we may assume that $\Gamma$ is a {\em Dynkin} $\Z$-grading
\begin{align}
&\Gamma: \g =\bigoplus_{i\in \Z} \g(i) \label{eq::Dyngr}
\end{align}
for $\chi$, that is, $\Gamma$ coincide with the eigenspace decomposition of the adjoint action an element $h\in\h_\oa$, i.e., $\g(i) = \{x\in \g|~[h,x]=ix\}$ for all $i\in \Z$, and furthermore, we have an $\mf{sl}(2)$-triple $\{e,h,f\}$ in $\g$. Associated to $\Gamma$, we define the {\em Kazhdan grading} $\g = \bigoplus_{i\in \Z}\g^{K}(i)$ of $\g$ as the grading on $\g$ determine by the eigenvalues of $\ad h$ shifted by $2$, i.e., $\g^{K}(i) = \g(i-2)$, for any $i\in \Z$. In the remaining sections, we fix such a Dynkin grading $\Gamma$ and an $\mf{sl}(2)$-triple. We  set $$F:= \Pi f,~H:=\Pi h.$$


\subsection{Skryabin’s equivalence}
A nilpotent element $E \in \g_\ob$ is {\em principal} if its associated even linear form $\chi= (E|\_)\in \g^\ast$ is {\em regular} (i.e., $\dim \g^\chi_\oa=\dim \g^E_\oa$ is minimal). For such $E$, the subalgebra $\mf m$ is the nilradical of a Borel subalgebra of $\mf g$, and   $U(\g, E)$ is referred to as a {\em principal finite $W$-algebra}, in analogy with the reductive and basic classical cases.

In the reductive setting (i.e., $\g=\g_\oa$),  Kostant \cite{Ko78}  established an   equivalence between the category $\mc N(\zeta)$ with regular $\zeta :=\chi|_{\mf m}$ and the category of finite-dimensional modules over the corresponding principal finite $W$-algebra, which is isomorphic to $Z(\g)$. This was subsequently generalized by Skryabin \cite{Skr} to the setting of arbitrary finite $W$-algebras. An analogue of Skryabin’s equivalence also holds for basic classical and queer Lie superalgebras; see \cite{Zh14,SX20}.  

\begin{lem}(Skryabin-type equivalence) \label{lem::skr}
 The Skryabin functor ${\rm Sk}(\_):= Q_\chi \otimes_{U(\g,E)}\_$ gives rise to an equivalence  from the category of $U(\g,E)$-modules to the category of $\g$-modules $M$ such that $\mf m_\chi$ acts on $M$ locally nilpotently. The   quasi-inverse of ${\rm Sk}$ is given by the Whittaker functor ${\rm Wh(\_)}$, i.e., for a $\g$-module $M$, we have
 \[{\rm Wh(M)}: =\{x\in M|~{\mf m}_\chi x=0\},\]
 which is a $U(\g,E)$-module.
 \end{lem}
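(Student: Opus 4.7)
The plan is to follow the standard Skryabin-type strategy, adapted to the queer setting with the odd nilpotent element $E\in\g_\ob$, and using the Dynkin $\Z$-grading \eqref{eq::Dyngr} and its associated Kazhdan filtration. The main structural input one needs is a PBW-type description of $Q_\chi$ as a free right $U(\g,E)$-module; once this is available, both functoriality and the unit/counit isomorphisms follow from standard formal arguments.

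First I would verify that $\mathrm{Sk}(V)=Q_\chi\otimes_{U(\g,E)}V$ lies in the target category, that is, $\mf m_\chi$ acts locally nilpotently. Since $Q_\chi=U(\g)/I_\chi$ is generated over $U(\g)$ by the canonical class $\overline{1}$, and since a Kazhdan-filtration argument shows that $\mathrm{ad}(\mf m_\chi)$ acts locally nilpotently on $Q_\chi$ (the Kazhdan grading places $\mf m$ in strictly positive degree), local nilpotency on any $Q_\chi\otimes_{U(\g,E)}V$ follows by transporting this through the tensor product.

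The technical heart is to establish that $Q_\chi$ is free as a right $U(\g,E)$-module, with an explicit PBW-type basis coming from a graded complement $\mf m^\perp\subseteq\g$ to $\mf m$ chosen with respect to $\omega_\chi$. This is the queer analogue of Premet's theorem, and of Zhao's and Shu--Xiao's extensions to the super setting. Concretely, I would introduce the Kazhdan filtration on $Q_\chi$ (with $\g^K(i)=\g(i-2)$ sitting in filtered degree $i$), identify $\mathrm{gr}\,U(\g,E)$ with the invariants $(\mathrm{gr}\,Q_\chi)^{\mathrm{ad}\,\mf m}$, and show by a graded freeness argument that $\mathrm{gr}\,Q_\chi$ is free over $\mathrm{gr}\,U(\g,E)$ on a basis coming from a PBW basis of $S(\mf m^\perp)$. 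Lifting this to the filtered level gives $Q_\chi\cong U(\mf m^\perp)\otimes U(\g,E)$ as right $U(\g,E)$-modules. The extra care in the queer case comes from the parity symmetry $\Pi\g(i)=\g(i)$ and from the fact that the Lagrangian subspace $l\subseteq\g(-1)$ must be $\Z_2$-graded; however, since the definition of $U(\g,E)$ in Subsection~\ref{sect::finiteWq} is independent of these choices, one may choose $l$ so as to make the complement $\mf m^\perp$ and the PBW enumeration as transparent as in the basic-classical case of \cite{SX20}.

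From the freeness one deduces the two natural isomorphisms of the equivalence. For the counit, the inclusion $V=1\otimes V\hookrightarrow\mathrm{Sk}(V)$ lands inside $\mathrm{Wh}(\mathrm{Sk}(V))$ because $\mf m_\chi\cdot\overline{1}=0$ in $Q_\chi$; conversely, any element of $\mathrm{Sk}(V)$ has a unique expansion along the PBW basis of $U(\mf m^\perp)$, and the requirement that $\mf m_\chi$ annihilate it forces all higher-order components to vanish, so $\mathrm{Wh}(\mathrm{Sk}(V))=V$. For the unit, the map $\mathrm{Sk}(\mathrm{Wh}(M))\to M$ sending $\overline{u}\otimes m\mapsto u\cdot m$ is surjective because $\mf m_\chi$ acts locally nilpotently on $M$, so $\mathrm{Wh}(M)$ generates $M$ over $U(\g)$ (a standard nilpotent-action argument produces Whittaker vectors in every cyclic $U(\g)$-submodule); and it is injective by the freeness of $Q_\chi$ over $U(\g,E)$ together with the uniqueness of expansion used above.

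The main obstacle is the freeness of $Q_\chi$ over $U(\g,E)$ in the queer setting, since the odd invariant form and the parity-reversed identification $\g\cong\Pi(\g^*)$ require one to track gradings and signs carefully in the Kazhdan filtration and its associated graded. This step, however, has effectively been carried out in \cite{Zh14}, so the proof would largely amount to assembling their PBW theorem for $U(\g,E)$ with the formal Skryabin-type argument outlined above.
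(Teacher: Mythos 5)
The paper does not give its own proof of this lemma; it cites \cite{Zh14,SX20}, and your outline is essentially a reconstruction of the standard argument those references use: freeness of $Q_\chi$ as a right $U(\g,E)$-module via the Kazhdan filtration and a PBW-type basis, local nilpotency of $\mf m_\chi$ from the grading, and the unit/counit isomorphisms deduced from the uniqueness of the PBW expansion. This is the correct route and matches what the paper is relying on. One small imprecision worth noting: $\omega_\chi$ pairs $\g(i)$ with $\g(-2-i)$ only, so it does not by itself define a graded complement $\mf m^\perp$ to $\mf m$ in all of $\g$; the complement one actually uses (as in \cite{Zh14}) is a graded subspace built from $\bigoplus_{i\geq 0}\g(i)$ together with a Lagrangian complement to $l$ in $\g(-1)$, or equivalently the orthogonal complement with respect to the odd invariant form $(\_|\_)$ rather than $\omega_\chi$. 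With that adjustment the argument goes through, and the $\Z_2$-graded issues you flag are handled exactly as in the cited sources.
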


\begin{rem}\label{rem::clsPS16}
The construction of the finite W-algebras for the queer Lie superalgebra $\mf q(n)$ presented above can naturally be generalized to the case of a finite direct sum of queer Lie superalgebras. Furthermore, in this general setting, Skryabin equivalence remains valid. A classification of the irreducible representations of the principal finite W-algebras for $\mf q(n)$ has been established in the work of Poletaeva and Serganova \cite{PS16b}. Combining this with Skryabin equivalence and Proposition \ref{prop::LeviSimple}, we obtain a complete classification and description of the simple objects in $\mc N_{\mf l}(\zeta)$. 
\end{rem}

In the following subsections, we explore an extension of Skryabin’s equivalence by building upon Losev's approach based on his decomposition theorem \cite{Los12}. This extension establishes equivalences between the BGK  categories $\mc O$ of finite $W$-algebras and MMS-type Whittaker categories.

\subsection{Super Darboux-Weinstein theorem}
This subsection is devoted to a super Darboux-Weinstein decomposition for $\mf q(n)$. We start with the following lemma, which is the statement of \cite[(3.1)]{Zh14}.
\begin{lem} \label{lem::1}
 We have
 \[\g = \g^E\oplus \Pi [\mf g, F] = \Pi \g^F\oplus  [\mf g, E]. \]
\end{lem}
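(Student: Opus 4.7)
The plan is to decompose along $\g = \g_\oa \oplus \g_\ob$ and reduce each parity component of the claimed decomposition to classical $\mf{sl}_2$-theory on $\gl(n) \cong \g_\oa$. Apply Jacobson--Morozov to the nilpotent $e = \Pi E \in \gl(n)$ to obtain an $\mf{sl}_2$-triple $(A_e, A_h, A_f)$ in $\gl(n)$; its embeddings via the diagonal (even) and off-diagonal (odd) inclusions into $\gl(n|n)$ give the elements $e, h, f \in \g_\oa$ and $E, H, F \in \g_\ob$, respectively.

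A direct matrix computation shows that, under the identifications $\g_\oa \cong \gl(n)$ (diagonal blocks) and $\g_\ob \cong \Pi\gl(n)$ (off-diagonal blocks), for $A \in \gl(n)$ and $\Pi B \in \Pi\gl(n)$ one has
\[
[E, A] = \Pi([A_e, A]), \qquad [E, \Pi B] = \{A_e, B\},
\]
with analogous formulas for $F$. Thus $\ad E$ acts by the commutator $[A_e, {}\cdot{}]$ on the even part but by the anticommutator $\{A_e, {}\cdot{}\}$ on the odd part. On the even component the classical $\mf{sl}_2$-decomposition $\gl(n) = \ker(\ad A_e) \oplus \mathrm{im}(\ad A_f)$ (applied to $\gl(n)$ as a finite-dimensional $\mf{sl}_2$-module via the adjoint action) immediately translates to $(\g^E)_\oa \oplus (\Pi [\g, F])_\oa$.

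The key novel step concerns the odd component, where the relevant operators are anticommutators rather than commutators. Introduce
\[
\widetilde{\rho}(e) := \{A_e, {}\cdot{}\}, \qquad \widetilde{\rho}(h) := \ad(A_h), \qquad \widetilde{\rho}(f) := \{A_f, {}\cdot{}\}
\]
on $\gl(n)$ (note the asymmetry: anticommutator for $e$ and $f$, commutator for $h$). A short calculation, using $[A_e, A_f] = A_h$ and the Leibniz rule for $\ad(A_h)$, verifies the defining $\mf{sl}_2$-relations
\[
[\widetilde{\rho}(e), \widetilde{\rho}(f)] = \widetilde{\rho}(h), \qquad [\widetilde{\rho}(h), \widetilde{\rho}(e)] = 2\widetilde{\rho}(e), \qquad [\widetilde{\rho}(h), \widetilde{\rho}(f)] = -2\widetilde{\rho}(f).
\]
Consequently $\widetilde{\rho}$ endows $\gl(n)$ with a second finite-dimensional $\mf{sl}_2$-module structure, and the standard $\mf{sl}_2$-decomposition gives $\gl(n) = \ker \widetilde{\rho}(e) \oplus \mathrm{im}\,\widetilde{\rho}(f)$, matching $(\g^E)_\ob \oplus (\Pi[\g, F])_\ob$ under $\Pi: \g_\ob \to \gl(n)$.

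Combining the two parity pieces yields the first decomposition $\g = \g^E \oplus \Pi[\g, F]$. The second decomposition $\g = \Pi \g^F \oplus [\g, E]$ follows by the symmetry $(e, f) \leftrightarrow (f, e)$ and $(E, F) \leftrightarrow (F, E)$, which preserves the entire setup. The main obstacle is verifying the $\mf{sl}_2$-relations for the anticommutator triple $\widetilde{\rho}$; this is the essential non-classical ingredient, forced by the fact that $\mf q(n)$ possesses only an odd (rather than even) non-degenerate invariant bilinear form, so that $\ad E$ acts on $\g_\ob$ by anticommutator rather than commutator.
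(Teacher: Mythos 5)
Your proof is correct and takes essentially the same route as the paper, which cites Zhao's Lemma~3.2(iii) for the observation that $\Pi\circ\operatorname{ad}E$, $\Pi\circ\operatorname{ad}F$, and a companion semisimple operator form an $\mathfrak{sl}(2)$-triple in $\End_{\mathbb C}(\mathfrak{g}_{\bar i})$ for each parity and then appeals to $\mathfrak{sl}(2)$-representation theory together with $\Pi^2=\operatorname{id}$. Your explicit verification that on the odd component the correct semisimple operator is the commutator $\operatorname{ad}A_h$ rather than the anticommutator $\{A_h,\cdot\}$ is a useful precision that the paper's shorthand $\Pi\circ\operatorname{ad}H$ glosses over.
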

\begin{proof} 
As observed in Part (iii) in the proof of \cite[Lemma 3.2]{Zh14}, the linear operators 
\[
    \Pi \circ \operatorname{ad} E, \quad 
    \Pi \circ \operatorname{ad} F, \quad 
    \Pi \circ \operatorname{ad} H : \mathfrak{g}_i \to \mathfrak{g}_i
\] constitute an $\mf{sl}(2)$-triple in $\End_\C(\g_i)$, for $i\in\Z_2$. The claimed decompositions in the statement of the lemma now follow from $\mf{sl}(2)$-representation theory and the fact that $\Pi^2$ is the identity function on $\g$.
\end{proof}


\begin{lem} \label{lem::2}
	The bilinear form $\omega_\chi$ restricts to a non-degenerate 
super-symplectic bilinear form  on $\Pi [\g, F]$.
\end{lem}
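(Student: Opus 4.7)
The plan is to identify the radical of $\omega_\chi$ on all of $\g$ with $\g^E$, and then combine this with the first decomposition in Lemma \ref{lem::1} to deduce that the restriction of $\omega_\chi$ to $\Pi[\g,F]$ is non-degenerate.

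First I would observe that $\omega_\chi$ is automatically even and super-skew-symmetric on $\g$: evenness follows from the fact that $\chi$ vanishes on $\g_\ob$, so $\omega_\chi(x,y)=\chi([x,y])$ can only be non-zero when $|x|+|y|\equiv 0 \pmod 2$, while super-skew-symmetry is inherited from $[y,x]=-(-1)^{|x||y|}[x,y]$. Hence it will suffice to check non-degeneracy on $\Pi[\g,F]$ in order to conclude that $\omega_\chi$ restricts to a super-symplectic form there.

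The main step is the computation of the radical of $\omega_\chi$ on $\g$. Using the invariance of the odd non-degenerate form $(\_|\_)$, I would rewrite
\[
\omega_\chi(x,y) \;=\; (E\,|\,[x,y]) \;=\; \pm\,([E,x]\,|\,y)
\]
for all $x,y\in\g$, where the sign is dictated by the standard super-invariance convention and is irrelevant to what follows. The form $(\_|\_)$ is non-degenerate (one verifies this directly from the explicit expression $(x|y)=\mathrm{otr}(xy)$, by testing against purely even and purely odd $y$), so the identity above shows that the radical of $\omega_\chi$ on $\g$ equals $\ker(\operatorname{ad} E)=\g^E$.

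Finally, Lemma \ref{lem::1} gives the direct-sum decomposition $\g=\g^E\oplus\Pi[\g,F]$, which means that $\Pi[\g,F]$ is a vector-space complement of the radical of $\omega_\chi$. A standard linear-algebra argument now finishes the proof: if $x\in\Pi[\g,F]$ satisfies $\omega_\chi(x,y)=0$ for every $y\in\Pi[\g,F]$, then, since $\omega_\chi$ vanishes identically on $\g^E$, the element $x$ pairs trivially with all of $\g$ and hence belongs to the radical $\g^E$; the intersection $\g^E\cap\Pi[\g,F]=0$ forces $x=0$. The only mild subtlety is sign-tracking in the super-invariance identity, but since the conclusion depends only on the non-degeneracy of $(\_|\_)$ and not on the precise sign, this is not a serious obstacle; the real content of the lemma lies in the decomposition provided by Lemma \ref{lem::1}.
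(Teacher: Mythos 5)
Your proof is correct and takes a slightly different, arguably cleaner route than the paper's. The key structural difference: you explicitly identify the radical of $\omega_\chi$ on all of $\g$ as $\g^E$ (via the invariance identity $\omega_\chi(x,y)=(E\,|\,[x,y])=\pm([E,x]\,|\,y)$ together with non-degeneracy of $(\_|\_)$), and then only need the first decomposition $\g=\g^E\oplus\Pi[\g,F]$ from Lemma~\ref{lem::1} to finish by the standard linear-algebra fact that a form is non-degenerate on any complement of its radical. The paper, by contrast, does not isolate the radical as a standalone fact; instead, starting from a candidate radical vector $a=\Pi[x,F]$, it first uses $\g=\g^E\oplus\Pi[\g,F]$ to get $\omega_\chi(\g,a)=0$, rewrites this as $([\g,E]\,|\,a)=0$, and then invokes the \emph{second} decomposition $\g=\Pi\g^F\oplus[\g,E]$ together with the explicit form of $a$ to compute $(\Pi\g^F\,|\,a)=(\g^F\,|\,[x,F])=0$, concluding $(\g\,|\,a)=0$ and hence $a=0$ directly from non-degeneracy of $(\_|\_)$. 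Both arguments ultimately rest on the same three ingredients (non-degeneracy of $(\_|\_)$, its invariance, and Lemma~\ref{lem::1}), but yours dispenses with the second decomposition entirely and makes the useful intermediate fact $\operatorname{rad}\omega_\chi=\g^E$ explicit; that is a genuine, if modest, simplification. One small point of hygiene: the phrase ``$\omega_\chi$ vanishes identically on $\g^E$'' should be sharpened to ``$\omega_\chi(\g^E,\g)=0$,'' which is what your argument actually uses.
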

\begin{proof} Suppose  that $a= \Pi [x, F]$  lies in the radical of $\omega_\chi|_{\Pi [\g, F]}$, for some $x\in \g$. It follows by Lemma \ref{lem::1} that  \[ \omega_\chi(\g,a) = \omega_\chi(\Pi [\mf g, F],a)= 0.\]  In particular, we have
	$([\g,E]|a)=0,$ which, combined with Lemma \ref{lem::1} again, gives $$(\g|a) = (\Pi\g^F+[\g,E]|a)=(\Pi \g^F| a) = (\Pi \g^F| ~\Pi [x,F])= (\g^F| [x,F]) =0.$$ Therefore, $a =0$ and the conclusion follows.
\end{proof}

 	\subsubsection{Parabolic decompositions associated with $\mf t$} \label{sect::46}
 	Let $\h$ be a Cartan subalgebra of $\g$ containing $h$. Define the following subalgebra of $\g$  $$\mf t: = \{x\in \h_\oa|~[x,E]=0\} \subseteq \g_\oa(0).$$
 	Since  $\mf t$ is   an abelian subalgebra preserving the super-symplectic bilinear form $\omega_{\chi}|_{\g(-1)}$, we may choose the Lagrangian subspace $l$ of $\g(-1)$ with respect to $\omega_\chi$ from Subsection \ref{sect::finiteWq} to be $\mf t$-invariant. Hence, the subalgebra $\mf m $ is $\mf t$-stable, and it follows that
 	\[\mf t\subseteq U(\g, E).\]

 	Let $T$ be the adjoint group of $\mf t$. We pick an integral element $\theta\in \mf t$, namely, $\theta$ lies in the cocharacter of $T$. We have an $\ad \theta$-eigenspace decomposition of $\g$:
 	\begin{align}
 		&\g =\bigoplus_{i\in\Z} \g_{\theta, i},
 	\end{align} where $\g_{\theta, i} = \{x\in \g\mid [\theta, x] =ix\}$ for $i\in \Z$. Recall that  $\Phi\subseteq \mf h^\ast_\oa$ denotes the set of all roots of $\g$ and $\g^{\alpha}$ denotes   the root space for $\alpha \in \Phi$, respectively. The element $\theta$ induces a parabolic decomposition:
 	\begin{align}
 		&\mf g = \mf u^-\oplus \mf l \oplus \mf u,~\text{ where }\mf u^- =\bigoplus_{\alpha(\theta)<0} \g^{{\alpha}},~\mf l =\bigoplus_{\alpha(\theta)=0} \g^{{\alpha}}, \text{ and } \mf u =\bigoplus_{\alpha(\theta)>0} \g^{{\alpha}}. \label{eq::parade}
 	\end{align}
 	We choose a triangular decomposition $\g =\mf n^-\oplus \mf h \oplus \mf n$ to be compatible with \eqref{eq::parade}, i.e., $\mf n^-\supseteq \mf u^-$ and $\mf n \supseteq \mf u$. This defines the parabolic 
subalgebra $\mf p =\mf u \oplus \mf l$.

 \subsubsection{Super Darboux--Weinstein theorem}
  For a vector superspace $\mf a$, let $S[\mf a]^\wedge_\eta$ denote  the completion  of the symmetric superalgebra $S[\mf a]\cong \C[\mf a^\ast]$ at a point $\eta \in \mf a_\oa^\ast\subseteq \mf a^\ast$. Specifically,  $S[\mf a]^\wedge_\eta = \varprojlim S[\mf a]/ I_\eta^k$
 is the topological completion of the polynomial algebra $\C[\mf a^\ast]$ with respect to the $I_\eta$-adic topology, where $I_\eta$ is the maximal ideal of $\C[\mf a^\ast]$ corresponding to $\eta$.

 We recall the classical $\C^{\times}$-action on $\g$ that stabilizes the Slodowy slice in $\g_\oa$ associated with the nilpotent element $e=\Pi E$.  Let $G$ denote the adjoint group of $\g_\oa$ (i.e.,  the smallest algebraic
 subgroup of ${\rm GL}(\g_\oa)$ with Lie algebra  $\ad \g_\oa$).
  The $\mf{sl}(2)$-embedding $\langle e,h,f \rangle\hookrightarrow \mf g_\oa$ exponentiates to a homomorphism ${\rm SL}(2)\rightarrow G$. Its restriction to the $1$-dimensional torus consisting of diagonal matrices gives rise to an
 one-parameter subgroup $\gamma': \C^{\times}\rightarrow G$ such that the differential of $\gamma'$ at $1$ is $h$. Extend the adjoint action of $G$ to $\g$ so that   $\gamma'(t)x =t^{i}x$, for $x\in \g(i)$ and $t\in \C^{\times}$. We define the   {\em Kazhdan action} of $\C^\times$ on $\g$:
    \begin{align} \label{eq::kazact}
    &\gamma: \C^\times\times \g\rightarrow \g,~\gamma(t)x = t^2\gamma'(t)x,~\text{ for }t\in \C^\times, x\in \g.
    \end{align}

 	\subsubsection{Star products} Let $A$ be a Poisson superalgebra with the Poisson bracket $\{,\}$ and define $A[[\hbar]] := A\otimes \C[[\hbar]]$, where $\hbar$ is a formal parameter.  Suppose that  $\ast: A\otimes A \rightarrow A[[\hbar]]$ is  an associative product, that is, $(p\ast q)\ast r =p\ast (q\ast r)$, for all $p,r,q\in A$. We write $p\ast q = \sum_{i=0} D_i(p,q)\hbar^{2i}$ with $D_i(p,q)\in A$,  for $p,q\in A$. Recall that $\ast$ is called a {\em star product} if it satisfies among others the following two conditions that are relevant for us in the sequel:
 	\begin{itemize}
 		\item[(1)] the natural $\C[[\hbar]]$-extension of $\ast$ to $A[[\hbar]]\otimes_{\C[[\hbar]]}A[[\hbar]]$ is associative;
 		\item[(2)] $f\ast g -fg\in \hbar^2A[[\hbar]]$ and $f\ast g -(-1)^{\ov f\ov g}g\ast f -\hbar^2\{f,g\}\in \hbar^4A[[\hbar]]$, for all homogeneous $f,g\in A$.
 	\end{itemize}
 	We shall not give all the conditions here (see \cite[Section 2]{Los12} for details), but instead only remark that these conditions enable us to extend the $\ast$ to completions of $A[[\hbar]]$ discussed below. The star product $\ast$ is called differential if each $\text{D}_i$ is a bidifferential operator of order at most $i$ in each variable.
 	
 	Following \cite{Los10b}, We refer to $A[[\hbar]]$ as a {\em quantum algebra} when we consider it as an algebra with respect to the star product defined above. When the subspace $A[\hbar]$ is a subalgebra in $A[[\hbar]]$ with respect to $\ast$, then we call it a quantum algebra as well.
 	
 	 	Suppose $\C^{\times}$ acts on $A$ by automorphisms. We denote the action of $t$ at $a$ by $t.a$ for $t\in \C^{\times}$ and $a\in A$. We define a $\C^{\times}$-action on $A[[\hbar]]$ by
 	 	\begin{align} \label{eq::Cactcomp}
 	 	&t\cdot(\sum_{i=0}^\infty a_i \hbar^i):= \sum_{i=0}^\infty t^i (t.a_i) \hbar^i,\quad t\in\C^\times, a_i\in A. 	 	\end{align}
        We let $A[[\hbar]]_{\C^{\times}{\rm\text{-}fin}}$ denote the $\C^{\times}$-finite part of  $A[[\hbar]]$, that is, it is the sum of all finite-dimensional $\C^\times$-submodule of $A[[\hbar]]$.

 \subsubsection{Homogeneous Weyl algebra and its completion}
 It follows by a dual version of \cite[Proposition 2.4]{Zh14} that $\ad F: \g(i)\rightarrow \g(i-2)$ is surjective for any  $i\leq 1$. This, together with the fact that $\Pi \g(i) =\g(i)$ for any $i\in \Z$, implies that  $$\mf m\subseteq  \bigoplus_{i\leq -1} \g(i) \subseteq \Pi [\g, F].$$ Therefore,  by Lemma \ref{lem::2} we may conclude that  $\mf m$ is a Lagrangian subspace of $\Pi [\g, F]$ with respect to $\omega_\chi$. Let $\mf m^\ast$ be the dual of $\mf m$ with respect to $\omega_\chi$.  We define $$V :=\mf m_\chi\oplus \mf m^\ast.$$ Then the symplectic superspace $(V,\omega_\chi)$ gives rise to a natural  Poisson bracket $\{\_,\_\}$ of the symmetric superalgebra  $S(V)$  determined by  \[\{v,w\} = \omega_{\chi}(v,w),~\text{ for any }v,w \in V.\]
 This allows us to define the quantum algebra structure on $S(V)[\hbar]$ via the {\em Moyal-Weyl star product} $\ast$ (see, e.g., \cite[Example 3.2.3]{Los10b}), which is determined by
 \[v\ast w -(-1)^{\ov v\, \ov w}w\ast v = \omega_{\chi}(v,w)\hbar^2, \]
 for homogenous $v,w \in V$. Here $\ov v, \ov w$ denote the parities of $v, w$, respectively. This algebra is usually referred to as the {\em homogeneous Weyl algebra}. We note that quotient $S(V)[\hbar]/(\hbar -1)$ is the usual Weyl algebra. Since the Moyal-Weyl star product is differential, we can extend it to the completion $S(V)^\wedge_0[[\hbar]]$ of $S(V)[\hbar]$; see \cite[Section~2]{Los10b} and \cite[Subsection 2.4]{Los11}. We equip $V$ with the Kazhdan action of $\C^\times$ in \eqref{eq::kazact}, that is,  $t.v = \gamma(t)v$, for $t\in \C^\times$ and $v\in V$.  Recall that $V$ is a $\mf t$-module, and so it is equipped with a natural action of the adjoint group  $T$ of $\mf t$.  Therefore, the group  $T\times \C^{\times}$ acts on the quantum algebra $S(V)^\wedge_0[[\hbar]]$.

  \subsubsection{   Gutt star product} 
  The symmetric algebra $S(\g)$ of $\g$ admits a natural Poisson bracket $\{\_,\_\}$ determined by  $$\{x,y\}= [x,y],\hskip0.2cm \text{ for any }x,y\in \g.$$ Set $T(\g)$ to be the tensor algebra of $\g$.
   Then the quantum algebra  $(S(\g)[[\hbar]],\ast)$ via the {\em Gutt star product} $\ast$ can be identified as the quotient algebra of $T(\g)[[\hbar]]$ by the ideal generated by $x\otimes y -(-1)^{\ov x~\ov y}y\otimes x -[x,y]\hbar^2$, for all $x,y\in \g$ homogenous.  We note that  the quotient algebra $S(\g)[\hbar]/(\hbar -1)$ is isomorphic to $U(\g)$.    We can define the quantum algebra $(S(\g^E)[[\hbar]],\ast)$. Similarly, we equip both $S(\g)[[\hbar]]$ and $S(\g^E)[[\hbar]]$ with the natural adjoint action of $T$ and the Kazhdan action of $\C^\times$. We extend the action of $T\times \C^{\times}$ to their completions $S(\mf g^E)^\wedge_\chi[[\hbar]]$ and $S(V)^\wedge_0[[\hbar]]$.

 \subsubsection{Equivariant super Darboux-Weinstein theorem}
 Let  $S(\g^E)_\chi^\wedge\widehat{\otimes} S(V)_0^\wedge$ denote the completion of the tensor product $S(\g^E){\otimes} S(V)  =S(\g^E\times V)$ at the point $(\chi,0)\in (\g^E\times V)^\ast$. We have the following equivariant version of the super Darboux-Weinstein theorem \cite[Theorem 1.3]{SX20}, which is a super generalization of \cite[Theorem 3.3.1]{Los10b}. We state it in the slightly more general form as in \cite[Proposition]{CW25}.
 
  \begin{thm}We have a $T\times \C^{\times}$-equivariant  isomorphism of Poisson superalgebras
  \[S(\g)^\wedge_\chi \cong S(\g^E)_\chi^\wedge\widehat{\otimes}  S(V)_0^\wedge.\]
  \end{thm}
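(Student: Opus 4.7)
The plan is to follow the iterative/formal construction used by Losev \cite{Los10b} and its super/equivariant adaptation by Shu--Xiao \cite{SX20} (also restated in \cite{CW25}), verifying that the inputs they require are supplied by Lemmas \ref{lem::1} and \ref{lem::2} together with the compatibility of the Kazhdan and $\mf t$-actions in the queer setting.

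First I would upgrade the linear data to an identification of super-symplectic vector spaces. Lemma \ref{lem::2} shows that $\omega_\chi$ is non-degenerate on $\Pi[\g,F]$, and since $\mf m \subseteq \bigoplus_{i\leq-1}\g(i) \subseteq \Pi[\g,F]$ is Lagrangian there, we may choose a complementary Lagrangian $\mf m^\ast$ inside $\Pi[\g,F]$ to produce a $T$-equivariant symplectic isomorphism $V=\mf m_\chi\oplus\mf m^\ast \cong \Pi[\g,F]$; the $\mf t$-equivariance is arranged exactly as in Subsection~\ref{sect::46}, where the Lagrangian $l$ was chosen $\mf t$-invariant. Combined with the vector space decomposition $\g = \g^E\oplus\Pi[\g,F]$ from Lemma \ref{lem::1}, this yields a $T\times\C^\times$-equivariant linear identification
\begin{align*}
\g \;\cong\; \g^E \,\oplus\, V,
\end{align*}
which is the zeroth-order approximation to the desired isomorphism and the starting point for the formal construction.

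Next I would build the Poisson isomorphism of completions order-by-order in the Kazhdan grading. Regard both sides as completions of the symmetric Poisson superalgebras of $\g$ and $\g^E\oplus V$ at the relevant points (translating by $\chi$ on the $\g^E$-side), and write the desired isomorphism as a formal sum of homogeneous components with respect to the Kazhdan $\C^\times$-action. At each step the obstruction to extending a partial Poisson isomorphism lies in a Poisson/Koszul cohomology group whose vanishing in the relevant positive Kazhdan degree is established exactly as in \cite[Proof of Theorem 3.3.1]{Los10b} and its super version \cite[Theorem 1.3]{SX20}; the vanishing uses only the non-degeneracy of $\omega_\chi$ on the symplectic factor and the decomposition of Lemma \ref{lem::1}, both of which we have. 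To secure $T\times\C^\times$-equivariance, one averages the iterative correction over the compact form of $T$ and projects onto the Kazhdan-homogeneous component of the appropriate degree, which is legitimate because the obstruction cocycles transform tensorially under $T\times\C^\times$.

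The main obstacle I anticipate is bookkeeping of super signs, since the invariant form $(\_|\_)$ on $\mf q(n)$ is odd rather than even. This shifts the parity of the $\g$-equivariant identification $\g\cong\Pi\g^\ast$, and one must check that the Poisson bracket and symplectic form on $\Pi[\g,F]$ have the super-symmetry required by the star-product formalism recalled before the theorem. Once this parity convention is fixed consistently, the arguments of \cite{SX20, CW25} transfer verbatim, because those papers are phrased in the generality of super Poisson algebras with a good $\Z$-grading and a nilpotent point, and the queer case satisfies all the structural hypotheses via Lemmas \ref{lem::1}, \ref{lem::2} and the $\mf t$-invariance of the Lagrangian $l$. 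No additional input specific to $\mf q(n)$ beyond these is required.
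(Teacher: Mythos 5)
Your plan aligns with the paper's implicit argument. The paper does not supply a proof of this theorem at all: it records the structural inputs in Lemmas~\ref{lem::1} and~\ref{lem::2} (and the observation that $\mf m$ is Lagrangian in $\Pi[\g,F]$), and then cites the theorem as the equivariant form of \cite[Theorem~1.3]{SX20} in the phrasing of \cite[Proposition]{CW25}. Your proposal correctly identifies those two lemmas as the queer-specific input and then sketches the formal order-by-order (Darboux--Weinstein / Fedosov-style) construction from \cite{Los10b, SX20}, so it is a fleshed-out version of the same route rather than a different one.

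One place where your caution is slightly misplaced: the ``sign bookkeeping'' worry you raise about the odd form $(\_|\_)$ infecting the symplectic structure does not actually arise. The odd invariant form is used only to identify $\g$ with $\Pi\g^\ast$ and hence to pass between the even functional $\chi$ and the odd element $E$; the bilinear form that enters the Poisson/Weyl-algebra machinery is $\omega_\chi(x,y) = \chi([x,y])$, which is \emph{even} and super-skewsymmetric, exactly as in the basic classical case of \cite{SX20}. So once $V\cong\Pi[\g,F]$ is set up via Lemmas~\ref{lem::1} and~\ref{lem::2}, the star-product and completion formalism carry over without any parity shift. Also, for the $T\times\C^\times$-equivariance, averaging over the compact form of $T$ is one legitimate route, but the mechanism in \cite{CW25} is simply that the iterative corrections can already be taken $T$-homogeneous and Kazhdan-homogeneous because $T\times\C^\times$ is reductive and the obstruction spaces decompose into weight spaces; no averaging step is needed. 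These are cosmetic, not substantive, differences.
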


Here and in the sequel the notation $\widehat{\otimes}$ denotes the completion of the tensor product with respect to the maximal ideal $(\chi,0)$ (see,e.g., \cite[Remark 6.2]{CW25}).

 	\subsection{Losev-Shu-Xiao type decomposition}\label{sect::LSX::decom}
  The following decomposition theorem can be derived by following the strategy laid out in \cite[Theorem 6.5]{CW25} (cf. \cite[Theorem 1.6]{SX20}, \cite[Proposition 2.1]{Los12}).

  \begin{thm} \label{thm::Losevdecom}
  	We have a $T\times \C^\times$-equivariant $\C[[\hbar]]$-linear isomorphism of the quantum algebras:
  	\begin{align}
  		&\Phi_{\hbar}: S(\g)^{\wedge}_\chi[[\hbar]] \cong S(\mf g^E)^\wedge_\chi[[\hbar]]\widehat{\otimes}_{\C[[\hbar]]}S(V)^\wedge_0[[\hbar]]. \label{eq::Losevdecom}
  	\end{align}
  \end{thm}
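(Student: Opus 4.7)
The plan is to follow the strategy developed by Losev \cite{Los10b,Los12} and adapted to the super setting by Shu--Xiao \cite{SX20} and Chen--Wang \cite{CW25}, promoting the classical super Darboux--Weinstein isomorphism stated just above to an isomorphism of the corresponding quantum (star-product) algebras. At $\hbar = 0$ the desired $\Phi_{\hbar}$ must reduce to the classical Poisson isomorphism $S(\g)^\wedge_\chi \cong S(\g^E)^\wedge_\chi \widehat{\otimes} S(V)^\wedge_0$, so the task is to lift this equivalence consistently through all orders in $\hbar$ while preserving both the $T$-action and the Kazhdan $\C^\times$-action defined in \eqref{eq::kazact} and \eqref{eq::Cactcomp}.

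First, I would view both sides of \eqref{eq::Losevdecom} as $T\times\C^\times$-equivariant star product quantizations of the same Poisson superalgebra $S(\g)^\wedge_\chi$: the left-hand side via the Gutt star product, and the right-hand side via the tensor product of the Moyal--Weyl star product on $S(V)^\wedge_0[[\hbar]]$ with a Fedosov-type star product on $S(\g^E)^\wedge_\chi[[\hbar]]$ (the latter obtained by restriction of the Gutt product along a $T$-equivariant transverse slice, as in \cite[Section 6]{CW25}). Second, I would appeal to a super-equivariant Fedosov uniqueness statement: any two $\C^\times$-equivariant star product quantizations of the same Poisson superalgebra that satisfy the normalization conditions (1)--(2) in the excerpt are gauge-equivalent via a $T\times\C^\times$-equivariant automorphism of the form $\mathrm{id} + \hbar^{2}(\cdots)$. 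This yields a $\C[[\hbar]]$-linear isomorphism $\Phi_\hbar$ reducing modulo $\hbar$ to the classical Darboux--Weinstein map.

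Third, to make the inductive construction of $\Phi_\hbar$ work, I would proceed order by order in $\hbar$: the obstruction to extending the equivalence from order $\hbar^{2k}$ to order $\hbar^{2k+2}$ lies in a Hochschild cohomology group of $S(\g^E\oplus V)^\wedge_{(\chi,0)}$, which by the formal Hochschild--Kostant--Rosenberg theorem is a shift of the exterior algebra on the tangent superspace. The positivity of the Kazhdan grading on $\hbar$ together with the positivity of the weights of $T\times\C^\times$ on the relevant completion forces all such $T\times\C^\times$-equivariant obstruction classes to vanish when restricted to the $\C^\times$-finite part, exactly as in the purely even case. The isomorphism $\Phi_\hbar$ is then obtained by gluing together the successive corrections; $T$-equivariance is preserved because the inductive choices can be made inside the $T$-invariants at each step.

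The main obstacle will be the careful super-sign bookkeeping in executing Fedosov's inductive construction for $\mf q(n)$: unlike the basic classical case treated in \cite{CW25}, here the bilinear form $(\_|\_)$ is \emph{odd}, the nilpotent $E$ is odd, and the parity-reversing isomorphism $\Pi$ is essentially used (e.g., in $V = \mf m_\chi\oplus\mf m^\ast$ and in Lemmas \ref{lem::1}--\ref{lem::2}) to bring the situation into the standard symplectic-superspace framework. I would need to verify that $V$ is genuinely a symplectic superspace under $\omega_\chi$ (which is guaranteed by Lemma \ref{lem::2} and the fact that $\mf m$ is Lagrangian, as observed just before the theorem), and that the Moyal--Weyl formulas applied to $V$ produce a star product compatible, through the Darboux--Weinstein decomposition, with the Gutt product on $S(\g)[[\hbar]]$. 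Once these super-symplectic ingredients are in place, the Losev--Shu--Xiao argument transports verbatim, the obstruction computation reducing to the same positive-weight vanishing statement and yielding the $T\times\C^\times$-equivariant $\C[[\hbar]]$-linear isomorphism asserted in \eqref{eq::Losevdecom}.
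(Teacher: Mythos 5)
Your proposal follows essentially the same route as the paper: both invoke the Losev--Shu--Xiao--Chen--Wang machinery, and the paper's own "proof" is simply a citation to \cite[Theorem 6.5]{CW25} (with \cite[Theorem 1.6]{SX20} and \cite[Proposition 2.1]{Los12}), together with the super-symplectic preparations (Lemmas \ref{lem::1} and \ref{lem::2}, and the observation that $\mf m$ is Lagrangian in $\Pi[\g,F]$ for the form $\omega_\chi$) that place $\mf q(n)$ inside the framework where those arguments transfer. The one distinction is framing: the paper explicitly defers to Shu--Xiao's \emph{algebraic} reformulation, which constructs $\Phi_\hbar$ by embedding the homogeneous Weyl algebra $S(V)[[\hbar]]$ into $S(\g)^\wedge_\chi[[\hbar]]$ order by order and identifying its centralizer, whereas your sketch is in the Fedosov-quantization/Hochschild-obstruction language closer to Losev's original \cite{Los10b}; these are two packagings of the same inductive argument. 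Two small cautions worth flagging in your framing: the vanishing of obstructions is driven solely by the positivity of the Kazhdan $\C^\times$-weights on the formal slice --- the torus $T$ has no preferred positive direction and contributes only an equivariance constraint, not a source of vanishing --- and the "Fedosov-type star product on $S(\g^E)^\wedge_\chi[[\hbar]]$ obtained by restriction" is, in the cited sources, not defined independently but precisely \emph{as} the centralizer of the embedded Weyl algebra, so an a priori independent definition of that product followed by a gauge-equivalence argument would require a separate justification to avoid circularity.
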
  Again, $\widehat{\otimes}$ denotes the completed tensor product of the topological algebras  $S(\mf g^E)^\wedge_\chi[[\hbar]]$ and $S(V)^\wedge_0[[\hbar]]$, with respect to the induced topology of $S(\mf g^E)^\wedge_\chi[[\hbar]]\otimes S(V)^\wedge_0[[\hbar]]$.

This type of decomposition theorem was initially established by Losev in \cite[Theorem 3.3.1]{Los10b} for reductive Lie algebras, then extended to the setting of basic Lie superalgebras by Shu and Xiao in \cite[Theorem 1.6]{SX20}, where it was formulated as a $\C^\times$-equivariant version only. In \cite[Theorem 6.5]{CW25}, it was observed that the arguments in \cite{SX20} give a slightly stronger $T\times \C^\times$-equivariant version.

  \begin{rem} \label{rem::23}
  The isomorphism restricts to an isomorphism between the subalgebras  $S(\g)^{\wedge}_\chi[[\hbar]]_{\C^\times{\text{-\rm fin}}}$ and $S(\mf g^E)^\wedge_\chi[[\hbar]]_{\C^\times{\text{-\rm fin}}}\widehat{\otimes}_{\C[[\hbar]]}S(V)^\wedge_0[[\hbar]]_{\C^\times{\text{-\rm fin}}}$ of the respective $\C^\times$-finite parts. For example, since $\g^E\subseteq \bigoplus_{i\geq 0}\g(i)$ has positive Kazhdan grading, the $\C^\times$-finite part of $S(\g^E)^\wedge_\chi[[\hbar]]$ coincides with the quantum subalgebra $\mc W_{\hbar} := (S(\g^E)[\hbar]$, $\ast$).  These algebras admit {\em Rees algebra} realizations, by Lemma \cite[Lemma 3.2.5]{Los10b}; see also  \cite[Lemma 3.4]{SX20}. We set $$\mc W: = \mc W_{\hbar}/(\hbar-1)\mc W_{\hbar}$$ to be the quotient of $(S(\g^E)[\hbar], \ast)$ by setting $\hbar =1$. By the same argument as in the proof of \cite[Corollary 3.3.3]{Los10b} (see also \cite[Theorem 3.8]{SX20}) together with an analogue of \cite[Lemma 3.7]{SX20}, it follows that   $\mc W$ and $U(\g,E)$ are isomorphic as both associative algebras and $\mf t$-modules.
    \end{rem}

  As demonstrated in \cite[Example 6.6]{CW25}, Theorem \ref{thm::Losevdecom} implies Lemma \ref{lem::skr}  when the Dynkin grading in \eqref{eq::Dyngr} is even. We now present an analogue of \cite[Theorem 6.7]{CW25}, adapted to our  $\mf q(n)$ setting.

Following \cite[Section~4]{Los12}, we let $d$ be the maximal eigenvalue of $\ad h$ on $\g$, and let $m$ be a positive integer such that $m>2d+2$.
Recall the element $\theta\in \mf t$ and the parabolic decomposition $\g=\mf u\oplus \mf l \oplus \mf u^-$ from Subsection~\ref{sect::46}. Consider the following new $\Z$-gradation of $\g$:
   \begin{align}
   	&\g = \bigoplus_{i\in \Z}\g'(i),~\text{where}~\g'(i) =\{x\in \g|~[h-m\theta, x] =(i-2)x\}. \label{eq::newgr}
   \end{align} 	Since $\mf l$ is a direct sum of queer Lie superalgebras, we may form the Levi analogue $\underline{\mf m}$ of $\mf m$ in \eqref{eq::defm} and $\underline{\mf m}_\chi$ of $\mf m_\chi$ in \eqref{eq::mchi}, respectively.    Define \begin{align}
       &\widetilde{\mf m}: = \underline{\mf m} +\mf u,~\text{ and ~ }\widetilde{\mf m}_\chi:= \underline{\mf m}_\chi+\mf u. \label{eq::23::m}
   \end{align}  Since
   $\g'(0) \subseteq  \underline{\mf m}\subseteq \oplus_{i\leq 0} \mf g'(i)$, $\mf u\subseteq \oplus_{i< 0} \mf g'(i)$ and  $\mf u^-\subseteq \oplus_{i> 0} \mf g'(i)$, it follows that $\widetilde{\mf m} = \bigoplus_{i\leq 0}\g'(i).$ We note that $\widetilde{\mf m}\cap V$ is a Lagrangian subspace of $V$ with respect to $\omega_\chi$.  Let ${\mathbf A}(V)$ denote the Weyl superalgebra of the symplectic superspace $V = (\widetilde{\mf m}\cap V) \oplus (\widetilde{\mf m}\cap V)^\ast$.

   Let $\varphi: \C^\times \hookrightarrow \C \times T$ be the diagonal embedding, whose differential is $(1,-m\theta)\in \C \times \mf t$, that is, the pull-back $\C^\times$-action induces the grading in  \eqref{eq::newgr}. By adapting  \cite[Lemma 3.7, Theorem 3.8]{SX20} to our new  $\C^\times$-equivariant setting, we can extend the algebra isomorphism of the $\C^\times$-finite parts by  \cite[Proposition 4.1]{Los12}. The following is an analogue of \cite[Theorem 6.7]{CW25} in our setting:
   \begin{thm} \label{thm::LSXCW}
   	The restriction of isomorphism $\varphi$ in \eqref{eq::Losevdecom}, with respect to the new $\C^\times$-action, to the $\C^\times$-finite parts extends to an isomorphism of algebras:
   	\begin{align}
   	&U(\g)_{\widetilde{\mf m}}^\wedge \cong U(\g,E)^\wedge_{\widetilde{\mf m}\cap \g^E} \otimes   {\bf A}(V)^\wedge_{\widetilde{\mf m}\cap V}, \label{eq::thm::LSXCW}	\end{align} where the notation ${\cdot}^\wedge_{\widetilde{\mf m}}$ denotes completion with respect to $\widetilde{\mf m}$.
   \end{thm}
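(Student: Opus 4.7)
The plan is to follow Losev's strategy from \cite[Proposition 4.1]{Los12}, as already adapted to the Lie superalgebra setting in \cite[Theorem 6.7]{CW25} (see also \cite{SX20}), and to show that the only place where $\mf q(n)$ differs from the basic classical case has already been absorbed into Theorem \ref{thm::Losevdecom} and Remark \ref{rem::23}. The starting point is the $T\times\C^\times$-equivariant isomorphism of quantum algebras $\Phi_\hbar$ from \eqref{eq::Losevdecom}. I would first reinterpret $\Phi_\hbar$ with respect to the new $\C^\times$-action obtained by pulling back the $\C^\times\times T$-action along the diagonal embedding $\varphi$ with differential $(1,-m\theta)$. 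Since $\varphi$ is a homomorphism of algebraic groups, $\Phi_\hbar$ remains equivariant for this new $\C^\times$-action, and the induced grading on $\g$ is exactly $\g=\bigoplus_i \g'(i)$ from \eqref{eq::newgr}, which is designed so that $\widetilde{\mf m}=\bigoplus_{i\leq 0}\g'(i)$.

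Next, I would pass to $\C^\times$-finite parts with respect to the new action. By a direct analogue of \cite[Lemma 3.7, Theorem 3.8]{SX20}, whose $T\times\C^\times$-equivariant version is the essential input already used in Remark \ref{rem::23} to identify $\mc W_\hbar/(\hbar-1)$ with $U(\g,E)$, the $\C^\times$-finite parts of the three quantum algebras in \eqref{eq::Losevdecom} admit Rees-algebra descriptions. Specializing at $\hbar=1$ gives $U(\g)$ on the left, and $U(\g,E)\otimes \mathbf{A}(V)$ on the right, together with an algebra isomorphism between them that respects the $\Z$-filtrations coming from the new grading. The choice $m>2d+2$ is precisely what is needed so that the new grading separates $\mf u$ (which is pushed to strictly negative degree) from $\mf u^-$ (strictly positive degree) and places $\underline{\mf m}$ in degrees $\leq 0$; this guarantees that $\widetilde{\mf m}$ is the non-positively graded part of $\g'$, and similarly for $\widetilde{\mf m}\cap \g^E$ and $\widetilde{\mf m}\cap V$ on the right.

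The final step is to extend the isomorphism of $\C^\times$-finite parts to the desired completions. The matching of the $I_{\widetilde{\mf m}}$-adic topology on the left with the product of the $I_{\widetilde{\mf m}\cap \g^E}$-adic and $I_{\widetilde{\mf m}\cap V}$-adic topologies on the right follows from the identification of these ideals with the principal ideals generated by the strictly negatively graded subspaces for the new $\Z$-grading, together with the equivariance of $\Phi_\hbar$. Once the topological compatibility is established, continuity of the star products (whose differential nature is part of the setup in Subsection \ref{sect::LSX::decom}) allows the isomorphism to extend uniquely to the completions, yielding \eqref{eq::thm::LSXCW}.

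The main obstacle, as in \cite{Los12,SX20,CW25}, is the verification of this topological compatibility: one must check that the gradings induced on the $\C^\times$-finite parts by the new $\C^\times$-action realize the required $\widetilde{\mf m}$-adic filtrations, and that the specialization $\hbar=1$ commutes with completion. In the $\mf q(n)$ setting this step is essentially formal because Theorem \ref{thm::Losevdecom} has already been established $T\times\C^\times$-equivariantly; the only new feature is that $V$ and $\g^E$ carry nontrivial odd parts arising from the odd invariant form on $\mf q(n)$, which is handled uniformly once one works with super-symplectic forms throughout. No genuinely new ingredient beyond those recorded in Subsection \ref{sect::LSX::decom} and Remark \ref{rem::23} is required.
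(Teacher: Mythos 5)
Your proposal is correct and follows essentially the same route the paper indicates: the paper itself offers no detailed proof of Theorem \ref{thm::LSXCW}, instead deriving it by adapting \cite[Lemma 3.7, Theorem 3.8]{SX20} to the new $\C^\times$-equivariant setting given by the diagonal embedding $\varphi$ (whose differential is $(1,-m\theta)$), and then invoking \cite[Proposition 4.1]{Los12} to extend the isomorphism of $\C^\times$-finite parts to the $\widetilde{\mf m}$-adic completions, exactly as you describe. Your account of the role of $m>2d+2$, the Rees-algebra realizations of the $\C^\times$-finite parts, and the specialization $\hbar=1$ is a faithful fleshing-out of what the paper leaves implicit by reference to \cite{Los12,SX20,CW25}.
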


  \subsection{Generalized Whittaker modules}  In this subsection, we assume that the nilpotent element $E$ is principal in $\mf l$.   This implies that the integral element $\theta$ is regular in $\mf t$.  
  In this case, all irreducible $U(\mf l,E)$-modules are finite dimensional; see also \cite{PS16}.

  A finitely-generated $U(\g)$-module $M$ is called a generalized Whittaker module corresponding to $\theta$ and $E$, provided that the subalgebra $\widetilde{\mf m}_{\chi}$ acts locally nilpotently on $M$. We denote by $\widetilde{\mf{Wh}}(\theta,E)$ the category of generalized Whittaker modules over $\g$; see also \cite{Los12, CW25}.

  Let ${\rm Sk}^{\mf l}(\_)$ be the Levi analogue of the Skryabin functor ${\rm Sk}$ in Lemma \ref{lem::skr}. Recall the parabolic subalgebra $\mf p =\mf u\oplus \mf l$ from Subsection~\ref{sect::46}. Composing with the parabolic induction functor $M^{\mf p}$ from \eqref{def::paraind},  the functor $$M^{\theta, E}:=M^{\mf p}\circ {\rm Sk}^{\mf l}$$ restricts to an exact functor from the category of finitely-generated $U(\mf l,E)$-modules to $\widetilde{\mf{Wh}}(\theta, E)$. This functor admits a right adjoint functor $\mc G(\_):=(\_)^{\widetilde{\mf m}_\chi}$ of taking $\widetilde{\mf m}_\chi$-invariants of generalized Whittaker modules. Denote by ${\mf{Wh}(\theta, E)}$ the full subcategory of $\widetilde{\mf{Wh}}(\theta, E)$ of modules $M$ such that $\dim M^{\widetilde{\mf m}_\chi}<\infty$.

 Since the nilpotent element  $E$ is principal in $\mf l$, it follows that the subalgebra $\widetilde{\mf m}$ is the nilradical of a Borel subalgebra $\mf b = \mf h\oplus \widetilde{\mf m}$ of $\g$. We define a nil-character $\zeta \in \text{ch} \widetilde{\mf m}_\oa$ by declaring that $\zeta|_{\mf u}=0$ and $\zeta|_{\underline{\mf m}}:= (E|\_): \underline{\mf m}\rightarrow \C$, that is, $\zeta = (E|\_) :\widetilde{\mf m} \rightarrow \C$. Then the corresponding  McDowell-Mili{\v{c}}i{\'c}-Soergel type  Whittaker category $\mc N(\zeta)$ is a full subcategory of $\widetilde{\mf{Wh}}(\theta, E)$, and so is the category $\widehat{\mc W}(\zeta)$ from Subsection \ref{sect::actofS}. The following lemma identifies $\mc N(\zeta)$ with two other significant categories: the  full subcategory of finite length objects in $\widetilde{\mf{Wh}}(\theta, E)$ and the category of finite-type generalized Whittaker modules in the sense of \cite[Section 3]{Los12}.

  \begin{lem} \label{lem::22} Let $M\in \widetilde{\mf{Wh}}(\theta, E)$. Then the following conditions are equivalent:
  	\begin{itemize}
  	\item[(1)]  $\dim M^{(\widetilde{\mf m}_\chi)_\oa}<\infty$.
  	\item[(2)] $M\in \mc N(\zeta)$.
  	\item[(3)] $M$ has a (finite) composition series.
  	\end{itemize}
  \end{lem}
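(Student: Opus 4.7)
The strategy is to reduce the lemma to the representation theory of the finite $W$-algebra $U(\mf l,E)$ via the Losev-Shu-Xiao decomposition \eqref{eq::thm::LSXCW}. Following Losev's argument in \cite[Section 4]{Los12} (and its super adaptation in \cite{CW25}), that isomorphism upgrades to an equivalence between $\widetilde{\mf{Wh}}(\theta,E)$ and the category of finitely generated $U(\mf l,E)$-modules, realized by $\mc G(M):=M^{\widetilde{\mf m}_\chi}$ with quasi-inverse $M^{\theta,E}=M^{\mf p}\circ\mathrm{Sk}^{\mf l}$. Because $E$ is principal in $\mf l$, Poletaeva-Serganova \cite{PS16b} showed that every irreducible $U(\mf l,E)$-module is finite-dimensional, so for finitely generated $U(\mf l,E)$-modules the conditions \emph{finite-dimensional} and \emph{finite length} coincide. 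Transporting through $\mc G$ then yields the working dictionary: $M$ has finite length in $\widetilde{\mf{Wh}}(\theta,E)$ if and only if $\dim M^{\widetilde{\mf m}_\chi}<\infty$.

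Given this setup, the equivalence $(1)\Leftrightarrow(3)$ reduces to comparing $M^{(\widetilde{\mf m}_\chi)_\oa}$ with $M^{\widetilde{\mf m}_\chi}$. Since $\zeta$ vanishes on odd elements, $\widetilde{\mf m}_\ob\subseteq \widetilde{\mf m}_\chi$ and $\widetilde{\mf m}_\ob$ preserves $M^{(\widetilde{\mf m}_\chi)_\oa}$ with $M^{\widetilde{\mf m}_\chi}=(M^{(\widetilde{\mf m}_\chi)_\oa})^{\widetilde{\mf m}_\ob}$. The direction $(1)\Rightarrow(3)$ is then immediate from $M^{\widetilde{\mf m}_\chi}\subseteq M^{(\widetilde{\mf m}_\chi)_\oa}$ combined with the dictionary. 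For $(3)\Rightarrow(1)$, I would argue on composition factors: each simple $L(V)$ in $\widetilde{\mf{Wh}}(\theta,E)$ (classified by Theorem~\ref{cls::thm}) has $\dim L(V)^{(\widetilde{\mf m}_\chi)_\oa}<\infty$, inherited from $V\in\mathrm{Irr}\,\mc N_{\mf l}(\zeta)$ via the MMS-type argument on the Levi side, and additivity along a composition series yields $\dim M^{(\widetilde{\mf m}_\chi)_\oa}<\infty$.

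The implication $(3)\Rightarrow(2)$ is straightforward: composition factors of a finite-length $M$ are simples $L(V)$, each carrying a single $\g_\oa$-central character, so $Z(\g_\oa)$ acts through only finitely many characters and hence locally finitely. For $(2)\Rightarrow(3)$, the local $Z(\g_\oa)$-finiteness together with finite generation of $M$ produces a finite block decomposition $M=\bigoplus_\chi M_\chi$. Each $M_\chi$ lies in $\widetilde{\mf{Wh}}(\theta,E)$ (local nilpotency of the whole $\widetilde{\mf m}_\chi$ follows from the hypothesis on the even part together with an Engel-type argument on the finite-dimensional odd part $\widetilde{\mf m}_\ob$). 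The fixed $\g_\oa$-central character on $M_\chi$ translates under $\mc G$ into a finite-codimensional constraint on $\mc G(M_\chi)$ coming from the image of $Z(\g_\oa)$ in the center of $U(\mf l,E)$ via the generalized Harish-Chandra homomorphism; combined with finite generation over $U(\mf l,E)$ this forces $\mc G(M_\chi)$ to be finite-dimensional, whence $M_\chi$ has finite length by the first paragraph.

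The hard part will be the very last step: showing that a finitely generated $U(\mf l,E)$-module on which $Z(\g_\oa)$ acts through a single character is automatically finite-dimensional. This should follow from the finite-codimensionality of the image of $Z(\g_\oa)$ inside the center of the principal $W$-superalgebra $U(\mf l,E)$---a feature known in the even (reductive) case and expected to persist in the queer setting---combined with the classification of \cite{PS16b}, but a careful super-version of the Harish-Chandra comparison for $\mf q(n)$, drawing on \cite{PS16}, will be needed to make it rigorous.
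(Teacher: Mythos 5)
Your opening move contains a conceptual error that undermines the whole strategy. The Losev--Shu--Xiao decomposition \eqref{eq::thm::LSXCW} together with the Weyl-algebra cancellation yields an equivalence $\mc K: \widetilde{\mf{Wh}}(\theta,E) \xrightarrow{\ \cong\ } \widetilde{\mc O}(\theta,E)$, which is a category of $U(\g,E)$-modules, \emph{not} an equivalence with finitely generated $U(\mf l,E)$-modules. The functors $M^{\theta,E} = M^{\mf p}\circ\mathrm{Sk}^{\mf l}$ and $\mc G(\_) = (\_)^{\widetilde{\mf m}_\chi}$ form an adjoint pair (parabolic induction and its right adjoint of taking invariants), not a pair of mutually quasi-inverse equivalences: if they were, the parabolic induction $M^{\mf p}$ would itself be an equivalence, which is absurd. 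Consequently the ``working dictionary'' ($M$ has finite length $\Longleftrightarrow$ $\dim M^{\widetilde{\mf m}_\chi}<\infty$) cannot be obtained by ``transporting through $\mc G$'', and your proof of $(1)\Leftrightarrow(3)$ has a hole at the very start. Even setting this aside, condition (1) concerns only the even part $M^{(\widetilde{\mf m}_\chi)_\oa}$, and your $(3)\Rightarrow(1)$ step, which needs $\dim L(V)^{(\widetilde{\mf m}_\chi)_\oa}<\infty$ for each simple subquotient, is essentially the assertion you are trying to establish rather than something ``inherited via the MMS-type argument''.

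For comparison, the paper's proof is short and citation-driven, and avoids the $W$-algebra detour entirely: $(1)\Leftrightarrow(2)$ is obtained by the same argument as the analogous \cite[Lemma 6.10]{CW25}; $(2)\Rightarrow(3)$ is the MMS finite-length theorem \cite[Theorem 2.6]{MS97}, applied after restriction to $\g_\oa$; and $(3)\Rightarrow(2)$ is proved by noting first that any \emph{simple} object of $\widetilde{\mf{Wh}}(\theta,E)$ lies in $\mc N(\zeta)$ because its restriction to $\g_\oa$ is $Z(\g_\oa)$-locally finite by \cite[Proposition 1]{C21}, and then that $\mc N(\zeta)$ is a Serre subcategory of $\g\Mod$ by \cite[Lemma 12]{CC23_2}, hence closed under extensions. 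Your sketch of $(3)\Rightarrow(2)$ is morally in this direction, but note that a simple $\mf q(n)$-module need not have a \emph{single} $\g_\oa$-central character; what one actually needs is local $Z(\g_\oa)$-finiteness, which is exactly what \cite[Proposition 1]{C21} supplies. Your treatment of $(2)\Rightarrow(3)$ via a Harish-Chandra comparison for $U(\mf l,E)$ is not needed and, as you yourself flag, remains unresolved as written.
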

  \begin{proof}
  	The equivalence of the assertions (1) and (2) can be proved
  	using analogous arguments as \cite[Lemma 6.10]{CW25}. The implication $(2) \Rightarrow (3)$ follows from \cite[Theorem 2.6]{MS97}.  It remains to show the    implication $(3) \Rightarrow (2)$. First, suppose that $M$ is simple. In this case,  the restriction $\Res M$ of $M$ is locally finite over $Z(\g_\oa)$  by \cite[Proposition 1]{C21}, and so we have  $M\in \mc N(\zeta)$. Finally, it follows by \cite[Lemma 12]{CC23_2} that $\mc N(\zeta)$ is a Serre subcategory of $\g\Mod$, and so it is closed under extensions in $\widetilde{\mf{Wh}}(\theta, E)$. This completes the proof.  \end{proof}

  \subsection{Category $\mc O$ of finite $W$-superalgebras}
In this subsection, we continue to assume that $E$ is an odd nilpotent element of $\g$ and $\theta\in\mf t$ is regular, so that $E$ is principal nilpotent in $\mf l$.

 Recall that $U(\g,E)$ and $U(\g^E)$ are isomorphic as $\mf t$-modules. We have a decomposition of $U(\g,E)$ into its $\ad\theta$-eigenspaces: $U(\g,E) = \bigoplus_{i\in \Z}U(\g, E)_{i}$. We set
 \[U(\g,E)_{\geq 0}:= \bigoplus_{i\geq 0}U(\g,E)_i,\hskip0.2cm~U(\g,E)_{> 0}:= \bigoplus_{i> 0}U(\g,E)_i.\] Furthermore, we define
 \[U(\g,E)_\sharp: = U(\g,E)_{\geq 0}\cap U(\g,E)U(\g,E)_{>0}.\] Then $U(\g,E)_{\geq 0}$ is a subalgebra of $U(\g,E)$ and both $U(\g,E)_{>0}$ and $U(\g,E)_\sharp$ are two-sided ideals of $U(\g,E)_{\geq 0}.$ Applying the arguments in \cite[Remark 6.8]{CW25} in conjunction with Theorem \ref{thm::Losevdecom},  we have
  \begin{align}
  &U(\mf l,E)\cong U(\mf g, E)_{\geq 0}/U(\g,E)_{\sharp}.\label{eq::Wind}
  \end{align}

 We let $\widetilde{\mc O}({\theta},E)$ denote the category of finitely-generated $U(\g,E)$-modules $M$ such that for any $m\in M$ there exists $i_m\in \Z$ with $U(\g, E)_k m =0$ for all $k\geq i_m$.
For any $U(\mf l,E)$-module $V$, we  may extend $V$  to a $U(\g, E)_{\geq 0}$-module by letting $U(\mf g,E)_\sharp$ act on $V$ trivially and define the following induced module:
  \[\mc M^{\theta,E}(V):= U(\mf g,E)\otimes_{U(\mf g,E)_{\geq 0}}V.\]
   This defines a right exact functor $\mc M^{\theta, E}$ from the category of finitely-generated $U(\mf l,E)$-modules to $\widetilde{\mc O}({\theta},E)$.
 This functor has a right adjoint functor $\mc F$ given by the $\mc F(M):= M^{U(\g,E)_{>0}}$  functor of taking $U(\g,E)_{>0}$-invariants of modules $M\in \widetilde{\mc O}({\theta},E)$. When $V$ is an irreducible $U(\mf l,E)$-module, we refer to $\mc M^{\theta, E}(V)$ as the {\em Verma module over $U(\g,E)$}.  We set $\mc O({\theta},E)$ to be the full subcategory of $\widetilde{\mc O}({\theta},E)$ of $U(\g,E)$-modules for which $\dim \mc F(M)<\infty.$ Finally, we denote by ${\mc O}({\theta},E)^{\rm fin}$ the full subcategory of ${\widetilde{\mc O}}({\theta},E)$ of all finite-length modules in ${\widetilde{\mc O}}({\theta},E)$.

 Consider the categories of (topological) modules equipped with discrete topologies over the algebras on both sides of  \eqref{eq::thm::LSXCW}. Then, the  category of  $U(\g)_{\widetilde{\mf m}}^\wedge$-modules can be identified as $\widetilde{\mf{Wh}}(\theta, E)$. If we let $\widetilde{\mf{Wh}}'(\theta, E)$ denote the category of (topological) modules over $U(\g,E)^\wedge_{\widetilde{\mf m}\cap \g^E} \otimes   {\bf A}(V)^\wedge_{\widetilde{\mf m}\cap V}$, then we obtain an equivalence of categories $ \widetilde{\mf{Wh}}(\theta, E)\cong  \widetilde{\mf{Wh}}'(\theta, E)$ by applying \cite[Lemmas 3.2.8 and 3.2.9]{Los10b} as in \cite[Proposition 5.1]{Los12}.  Combined with the equivalence $\widetilde{\mf{Wh}}'(\theta, E)\rightarrow \widetilde{\mc O}(\theta, E),~M\mapsto M^{\widetilde{\mf m}\cap V}$, we have an equivalence of categories
  \begin{align}
  &\mc K: \widetilde{\mf{Wh}}(\theta, E) \xrightarrow{\cong} \widetilde{\mc O}(\theta, E),
\end{align} which restricts to an equivalence   $\mf{Wh}(\theta, E) \xrightarrow{\cong} \mc O(\theta, E)$. Furthermore, it follows by Lemma \ref{lem::22} that the restriction of $\mc K$ gives rise to an equivalence  $\mc N(\zeta)\cong \mc O(\theta, E)^{\rm fin}$.
  Let $\text{Id}_{\mc O(\theta, E)^{\rm fin}}$ be the identity functor on $\mc O(\theta, E)^{\rm fin}$. We obtain an isomorphism
  $$\End_{\mc N(\zeta)}(\text{Id}_{\mc N(\zeta)})\cong \End_{\mc O(\theta, E)^{\rm fin}}(\text{Id}_{\mc O(\theta, E)^{\rm fin}}).$$

  Composing with the ring homomorphism  $\vartheta: \hat{S}^{W_\zeta}\rightarrow \End_{\mc N(\zeta)}(\text{Id}_{\mc N(\zeta)})$ from Subsection \ref{sect::actofS}, we obtain an action of  $\hat{S}^{W_\zeta}$ on $\mc O(\theta, E)^{\rm fin}$. Let $\mc O^0(\theta, E)^{\rm fin}$ denote the full abelian subcategory of $\mc O(\theta, E)^{\rm fin}$ consisting of  objects  which are annihilated by the maximal ideal $I_\zeta$ of $\hat{S}^{W_\zeta}$:
 \[  \mc O^0(\theta, E)^{\rm fin}:= \{M\in  \mc O(\theta, E)^{\rm fin}|~I_\zeta M=0\}.\]

  For any irreducible $U(\mf l,E)$-module $V$, we let ${\mc  L}^{\theta, E}(V)$ denote the irreducible quotient of the $U(\g, E)$-Verma module  ${\mc M}^{\theta, E}(V)$. Recall that $\nu \in \mf h_\oa^\ast$ is a dominant weight such that   under the dot-action of $W$ its stabilizer subgroup is $W_{\mf l_\zeta}$. Denote by   $L_{\mf l}(\la,\zeta)$ the Levi analogue of  $L(\la,\zeta)$ in Subsection \ref{sect::bac} and let ${\rm Wh^{\mf l}}$ be the quasi-inverse of ${\rm Sk}^{\mf l}$. We define the $U(\g,E)$-highest weight modules $$\mc M^{\theta, E}(\la) : = \mc M^{\theta, E}({\rm Wh}^{\mf l}L_{\mf l}(\la,\zeta)),~\mc L^{\theta, E}(\la) : = \mc L^{\theta, E}({\rm Wh}^{\mf l}L_{\mf l}(\la,\zeta)),$$ for $\la \in \Lambda(\nu)$. The following is the main result in this section:

  \begin{thm} \label{thm::equivWhiWmod} There is the following commutative diagram:
  	\begin{align}
    \begin{split}
  		&\xymatrixcolsep{2pc} \xymatrix{
  	 \widehat{\mc W}(\zeta) \hskip 0.2cm \ar@{^(->}[r]  \ar[d]^{\cong}	&	\mc N(\zeta)  \hskip 0.2cm  \ar@{^(->}[r]   \ar[d]^{\cong}    &  \mf{Wh}(\theta, E) \ar@{^(->}[r] \ar@<-2pt>[d]^{\cong}   & \widetilde{\mf{Wh}}(\theta, E) \ar[d]_{\mc K}^{\cong} \\
  	 \mc O^0(\theta, E)^{\rm fin} \ar@{^(->}[r] \hskip 0.2cm & {\mc O}(\theta, E)^{\rm fin}  \ar@{^(->}[r] \hskip 0.2cm  &   \mc O(\theta, E) \ar@{^(->}[r]  \hskip 0.2cm & \widetilde{\mc O}(\theta, E)} \label{eq::8}
     \end{split}
  	\end{align}
  Furthermore, we have the following.
  \begin{enumerate}
  \item[(1)] ${\mc  M}^{\theta, E}(V), {\mc  L}^{\theta, E}(V)\in \mc O^0(\theta, E)^{\rm fin}$, for  any irreducible $U(\mf l,E)$-module $V$.
  \item[(2)] Any simple object in $\widetilde{\mc O}(\theta,E)$ is isomorphic to ${\mc  L}^{\theta, E}(V)$, for some irreducible $U(\mf l,E)$-module $V$.
  \item[(3)] For any $\la, \nu \in \Lambda(\nu)$, we have $$[{\mc  M}^{\theta, E}(\la): {\mc  L}^{\theta, E}(\mu)] = [M^{\mf p}(\la): L(\mu)].$$
  \end{enumerate}
  \end{thm}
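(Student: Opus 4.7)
My strategy is to transport all four assertions across the equivalence $\mc K$ and reduce them to the structural results on Whittaker modules proved in Section \ref{sect::2}. The three rightmost vertical equivalences in the diagram are already established in the discussion preceding the theorem (as consequences of Theorem \ref{thm::LSXCW} and Lemma \ref{lem::22}), so only the leftmost square requires new input. For this, I observe that the action of $\hat{S}^{W_\zeta}$ on $\mc O(\theta,E)^{\rm fin}$ is defined precisely as the pullback of $\vartheta$ under $\mc K$. Hence $\widehat{\mc W}(\zeta)=\{M\in\mc N(\zeta):\vartheta(I_\zeta)M=0\}$ is matched tautologically with $\mc O^0(\theta,E)^{\rm fin}=\{M:I_\zeta M=0\}$, and commutativity of the left square follows.

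The key remaining ingredient is the natural isomorphism $\mc K^{-1}\circ\mc M^{\theta,E}\cong M^{\theta,E}=M^{\mf p}\circ{\rm Sk}^{\mf l}$ of induction functors from the category of $U(\mf l,E)$-modules. Granting this, assertion (1) follows: choosing $V={\rm Wh}^{\mf l}L_{\mf l}(\la,\zeta)$ and using ${\rm Sk}^{\mf l}\circ{\rm Wh}^{\mf l}=\mathrm{Id}$, I obtain
$\mc K^{-1}(\mc M^{\theta,E}(\la))=M^{\mf p}(L_{\mf l}(\la,\zeta))=M(\la,\zeta)$, and passing to irreducible quotients yields $\mc K^{-1}(\mc L^{\theta,E}(\la))=L(\la,\zeta)$; both lie in $\widehat{\mc W}(\zeta)$ by Lemma \ref{eq::cathatW}, so that $\mc M^{\theta,E}(\la), \mc L^{\theta,E}(\la)\in\mc O^0(\theta,E)^{\rm fin}$. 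For (2), any simple $S\in\widetilde{\mc O}(\theta,E)$ pulls back to a simple object $T=\mc K^{-1}(S)$ in $\widetilde{\mf{Wh}}(\theta,E)$; local nilpotency of $\mf m_\chi$ together with simplicity and Lemma \ref{lem::22} forces $T\in\mc N(\zeta)$, and Theorem \ref{cls::thm} identifies $T\cong L(V)$ for some simple $V\in\mc N_{\mf l}(\zeta)$. The Levi Skryabin equivalence (Remark \ref{rem::clsPS16}) gives $V\cong{\rm Sk}^{\mf l}(W)$ for an irreducible $U(\mf l,E)$-module $W$, whence $S\cong\mc L^{\theta,E}(W)$. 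Finally, (3) is immediate from exactness of the equivalence in the left square, giving $[\mc M^{\theta,E}(\la):\mc L^{\theta,E}(\mu)]=[M(\la,\zeta):L(\mu,\zeta)]$, which equals $[M^{\mf p}(\la):L(\mu)]$ by Theorem \ref{eq::compostadWhi}.

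The main obstacle is proving the intertwining $\mc K\circ M^{\theta,E}\cong\mc M^{\theta,E}$. This requires a careful analysis of how the Losev--Shu--Xiao isomorphism of Theorem \ref{thm::LSXCW} interacts with the parabolic $\mf p=\mf l\oplus\mf u$ and the quotient identification \eqref{eq::Wind}. My plan is to adapt the argument of \cite[Proposition 5.2]{Los12} and its basic classical super generalization in \cite{CW25}: track the $\C^\times$-weights under the pullback action induced by $\varphi$ from \eqref{eq::newgr}, and verify that the functor of taking $U(\g,E)_{>0}$-invariants on the $W$-algebra side corresponds, via the $(\widetilde{\mf m}\cap V)$-invariants functor supplied by the Weyl-algebra factor, to $M^{\mf p}\circ{\rm Sk}^{\mf l}$ on the Whittaker side. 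The queer setting introduces no additional structural difficulty here, since the requisite $T\times\C^\times$-equivariance is already built into Theorem \ref{thm::Losevdecom}.
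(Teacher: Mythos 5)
Your overall strategy mirrors the paper's: transport everything across $\mc K$, reduce (1)--(3) to Lemma~\ref{eq::cathatW}, Theorem~\ref{cls::thm} and Theorem~\ref{eq::compostadWhi}, and observe that the left square is tautological because the $\hat{S}^{W_\zeta}$-action on $\mc O(\theta,E)^{\rm fin}$ is defined by pulling back $\vartheta$ through $\mc K$. You also correctly isolate the one nontrivial intertwining, $\mc K\circ M^{\theta,E}\cong\mc M^{\theta,E}$, as the crux.

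Where you diverge, and where a gap remains, is precisely at that crux. You describe a \emph{plan} to ``track the $\C^\times$-weights under the pullback action induced by $\varphi$'' and to ``verify that the functor of taking $U(\g,E)_{>0}$-invariants \dots corresponds \dots to $M^{\mf p}\circ{\rm Sk}^{\mf l}$,'' but this statement does not type-check: $\mc F=(\_)^{U(\g,E)_{>0}}$ and $M^{\mf p}\circ{\rm Sk}^{\mf l}=M^{\theta,E}$ are functors going in opposite directions, so they cannot ``correspond.'' What actually needs to be matched is the pair of \emph{right} adjoints, $\mc F$ and $\mc G=(\_)^{\widetilde{\mf m}_\chi}$. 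The paper's route is a short adjunction argument: for $M\in\widetilde{\mf{Wh}}(\theta,E)$ one checks directly that
$\mc F(\mc K(M))=\mc K(M)^{U(\g,E)_{>0}}\cong M^{\widetilde{\mf m}_\chi}=\mc G(M)$,
which follows because $\mc K(M)=M^{\widetilde{\mf m}\cap V}$ and $\widetilde{\mf m}_\chi$ is generated by $\widetilde{\mf m}\cap V$ together with the part of $\widetilde{\mf m}\cap\g^E$ sitting in $U(\g,E)_{>0}$ under the decomposition of Theorem~\ref{thm::LSXCW}. Once $\mc F\circ\mc K\cong\mc G$ is established, uniqueness of left adjoints immediately gives $\mc K\circ M^{\theta,E}\cong\mc M^{\theta,E}$; no separate $\C^\times$-weight bookkeeping is required. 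You should replace your sketch with this argument (or carry out the detailed verification you outline, taking care to formulate it at the level of right adjoints).

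Two smaller points. First, you take the inclusion $\mc N(\zeta)\subseteq\mf{Wh}(\theta,E)$ as ``already established in the discussion preceding the theorem,'' but that inclusion is not stated there; it is proved in the theorem's proof by comparing the characterizations $\mf{Wh}(\theta,E)=\{M:\dim M^{\widetilde{\mf m}_\chi}<\infty\}$ and $\mc N(\zeta)=\{M:\dim M^{(\widetilde{\mf m}_\chi)_\oa}<\infty\}$ from Lemma~\ref{lem::22}, together with $(\widetilde{\mf m}_\chi)_\oa\subseteq\widetilde{\mf m}_\chi$. You should include this. Second, for (2) your invocation of the Levi Skryabin equivalence (Remark~\ref{rem::clsPS16}) to write $V\cong{\rm Sk}^{\mf l}(W)$ is fine and matches the paper's use of Theorem~\ref{cls::thm} plus Lemma~\ref{lem::22}, so that part is sound.
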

\begin{proof} The equivalence of categories $\mc K: \widetilde{\mf{Wh}}(\theta, E) \xrightarrow{\cong} \widetilde{\mc O}(\theta, E)$ restricts to an equivalence   $\mc N(\zeta)\cong \mc O(\theta, E)^{\rm fin}$  by Lemma \ref{lem::22} and an equivalence $\mf{Wh}(\theta, E) \xrightarrow{\cong} \mc O(\theta, E)$, respectively. We have $\mf{Wh}(\theta, E)= \{M\in \widetilde{\mf{Wh}}(\theta, E)|~ \dim M^{\widetilde{\mf m}_\chi}<\infty\}$. By Lemma \ref{lem::22}, we also have $\mc N(\zeta) = \{M\in \widetilde{\mf{Wh}}(\theta, E)|~\dim M^{(\widetilde{\mf m}_\chi)_\oa}<\infty \}$. It follows that $\mc N(\zeta)\subseteq \mf{Wh}(\theta, E)$, and hence we have the inclusion $\mc O(\theta,E)^{\text{fin}}\subseteq\mc O(\theta,E)$, which in turn  completes the proof of the diagram in \eqref{eq::8}.

	It remains to show the assertions in (1)--(3).  The arguments in \cite[Corollary 6.14]{CW25} can be adapted to our situation as follows. 	For any generalized Whittaker module $M\in \widetilde{\mf{Wh}}(\theta, E),$
 we have $$ \mc F(\mc K(M)) = \mc K(M)^{U(\g, E)_{>0}}\cong M^{\widetilde{\mf m}_\chi} = {\mc G}(M).$$
 Therefore, the functors {$ \mc F\circ \mc K$} and $\mc G$ are isomorphic and thus $\mc K\circ M^{\theta, E}\cong \mc M^{\theta, E}$. In particular, for any $\la,\mu\in \Lambda(\nu)$, the standard Whittaker modules of $\g$ from Subsection~\ref{sect::clsofsimpleWhi} correspond to the Verma modules over $U(\g,E)$ under $\mc K$:
 \[{\mc M}^{\theta,E}(\la)\cong {\mc K}( M^{\theta, E}({\rm Wh}^{\mf l}L_{\mf l}(\la,\zeta)))\cong  \mc K(M^{\mf p}(L_{\mf l}(\la,\zeta)))\cong \mc K (M(\la,\zeta)). \]
 Therefore, the assertions in (1) follow by Lemma \ref{eq::cathatW}. By Theorem \ref{cls::thm}, all simple objects are quotients of standard Whittaker modules of $\g$. This, together with Lemma~\ref{lem::22}, implies the conclusion in (2). Finally, the assertion in (3) is a consequence of Theorem~\ref{eq::compostadWhi}. This completes the proof.
\end{proof}


\begin{thebibliography}{9999999} 	
	\bibitem[AB21]{AB21}
	J. C. Arias, E. Backelin.
	{\em Projective and Whittaker functors on category $\mathcal O$}. J. Algebra {\bf 574}
	(2021), 154--171.	
	
\bibitem[Bac97]{B97} E.~Backelin. {\em Representation of the category $\mathcal O$ in Whittaker categories}.  		Int. Math. Res. Not. IMRN  		{\bf 4} (1997), 153--172.
	


\bibitem[BCW14]{BCW14}
I.~Bagci, K.~Christodoulopoulou, E.~Wiesner. {\em Whittaker categories and Whittaker modules for Lie
superalgebras.} Commun. Algebra {\bf 42}(11), 4932--4947 (2014)



	\bibitem[Bao17]{Ba17}
H.~Bao. {\em Kazhdan--Lusztig theory of super type D and quantum symmetric pairs.} Represent. Theory 21, 247--276 (2017).

\bibitem[BW18]{BW18} H.~Bao and W.~Wang, {\em A new approach to Kazhdan-Lusztig theory of type B via quantum symmetric pairs}.   Ast\'erisque {\bf 402}, 2018, vii+134pp.







	\bibitem[BM11]{BM11}	P. ~Batra and V.~Mazorchuk. 		{\em Blocks and modules for Whittaker pairs.} 			J. Pure Appl. Algebra {\bf 215.7} (2011): 1552--1568.


	\bibitem[Bru04a]{Bru04}
J. Brundan,
{\em Kazhdan-Lusztig polynomials and character formulae for the Lie superalgebra
$\mf {q}(n)$}, Adv. Math. {\bf 182} (2004), 28--77.

	\bibitem[Bru04b]{Bru04b}
J. Brundan,
{\em Tilting modules for Lie superalgebras}, Commun. Algebra {\bf 32} (2004) 2251--2268.

\bibitem[BD17]{BD17} J.~Brundan and N.~Davidson. {\em Type A blocks of super category $\mc O$}. J.~Algebra {\bf 473}, (2017) 447--480.


\bibitem[BD19]{BD19} J.~Brundan and N.~Davidson. {\em Type C blocks of super category $\mc O$}. Math.~Z.~{\bf 293} (2019), 867--901. 

	\bibitem[BK06]{BK06}
J. Brundan, A. Kleshchev. 
{\em Shifted Yangians and finite $W$-algebras}, Adv. Math. {\bf 200} (2006), 136--195.

\bibitem[BK08]{BK08}
J. Brundan, A. Kleshchev. 
{\em Representations of shifted Yangians and finite $W$-algebras}, Mem. Amer.
Math. Soc. {\bf 196} (2008), 107 pp.

\bibitem[BGK08]{BGK08} J.~Brundan, S.~Goodwin and A.~Kleshchev. {\em Highest weight theory for finite  $W$-algebras}.
Int. Math. Res. Not. IMRN (2008), no. 15, Art. ID rnn051, 53 pp.


\bibitem[Ch16]{Ch16}
C.-W.~Chen, {\em Reduction method for representations of queer Lie superalgebras}, J. Math. Phys. {\bf 57}, 051703 (2016).

	\bibitem[Ch21]{C21}
C.-W.~Chen.
{\em Whittaker modules for classical Lie superalgebras}. Commun. Math. Phys. {\bf 388}, 351--383.
(2021).

	
		

  		\bibitem[CC24a]{CC23_2}
  C.-W. ~Chen and S.-J. Cheng. {\em Whittaker categories of quasi-reductive Lie superalgebras and principal finite W-superalgebras}. Transform.~Groups, to appear, \href{https://doi.org/10.1007/s00031-024-09887-8}{https://doi.org/10.1007/s00031-024-09887-8}.
	
		\bibitem[CC24b]{CC22}
	C.-W. ~Chen and S.-J. Cheng. {\em Whittaker categories of quasi-reductive Lie superalgebras and quantum symmetric pairs }. Forum Math. Sigma {\bf 12} (2024), e37, 1--29.

		\bibitem[CCC21]{CCC21}
	C.-W. ~Chen, S.-J.~ Cheng and K.~ Coulembier.
	{\em  Tilting modules for classical Lie superalgebras}.  J. Lond.~Math.~Soc.~(2) {\bf 103} (2021) 870--900.

 	\bibitem[CCM23]{CCM23}
	C.-W. Chen, S.-J. Cheng and V. Mazorchuk. {\em Whittaker categories, properly stratified categories
		and Fock space categorification for Lie superalgebras.}   Commun.~Math.~Phys.~{\bf 401}
	(2023) 717--768.


 	\bibitem[CCS25]{CCS25}
	C.-W. Chen, S.-J. Cheng and U. R. Suh.
    {\em Whittaker modules of central extensions of Takiff superalgebras and finite supersymmetric W-algebras}, Commun. Math. Phys. {\bf 407}, 13 (2026).   \href{https://doi.org/10.1007/s00220-025-05521-0}{https://doi.org/10.1007/s00220-025-05521-0}

			\bibitem[C93]{C93} S.-J.~Cheng 	
	{\em Representations of central extensions of differentiably simple Lie superalgebras},  Comm. Math. Phys. {\bf 154} (3), 555--568, 1993.
	
	\bibitem[C95]{C95} S.-J.~Cheng 
	{\em Differentiably simple Lie superalgebras and representations of semisimple Lie superalgebras}, 	J. Algebra, {\bf 73} (1), 1--43, 1995.

\bibitem[CKW17]{CKW17} S.-J.~Cheng, J.-H.~Kwon and W.~Wang, {\em Character formulae for queer Lie superalgebras and canonical bases of types $A/C$}. Comm.~Math.~Phys.~{\bf 352} (2017), 1091--1119. 

 
	
    
		\bibitem[CLW11]{CLW11}
	S.-J. Cheng, N.~Lam, W.~Wang. {\em Super duality and irreducible characters of ortho-symplectic Lie
	superalgebras.} Invent. Math. {\bf 183} (2011) 189--224.

	\bibitem[CLW15]{CLW15} 	S.-J. Cheng, N.~Lam, W.~Wang. {\em Brundan–Kazhdan–Lusztig conjecture for general linear Lie
	superalgebras.} Duke Math. J. {\bf 110} (2015) 617--695.
	
	
	
	


		\bibitem[CW12]{CW12}
	S.-J.~Cheng and W.~Wang.
	{\it Dualities and representations of Lie superalgebras}. Grad. Stud. Math. {\bf144}
	American Mathematical Society, Providence, RI, (2012), xviii+302 pp.

\bibitem[CW25]{CW25}
	S.-J.~Cheng and W.~Wang.
	{\em Super duality for Whittaker modules and finite $W$-algebras}. Annals of Representation Theory {\bf 2}(4) (2025), 505-535.



	\bibitem[CPS96]{CPS96}
	E. Cline, B. Parshall, L. Scott.
	{\em Stratifying endomorphism algebras.}
	Mem. Amer. Math. Soc. {\bf 124} (1996), no. 591.

\bibitem[DK06]{DSK06} A. De Sole and V.G. Kac.
{\em Finite vs affine W-algebras}.  Jpn. J. Math.1, no.1 (2006) 137--261.

\bibitem[Dl00]{Dl00} V. Dlab,
{\em Properly stratified algebras,} C. R. Acad. Sci. Paris Sér. I Math. {\bf 331}(3) (2000), 191--196.


\bibitem[EK05]{EK05}
A. Elashvili and V.G. Kac.
{\em Classification of good gradings of simple Lie algebras, Lie groups
and invariant theory (ed. E. B. Vinberg)}, American Mathematical Society Translations
{\bf 213} AMS, Providence, RI, (2005), 85--104.
	




   \bibitem[Fr07]{Fr07}
  A. Frisk,
  {\em Typical blocks of the category $\mc O$ for the queer Lie superalgebra.} J. Algebra
  Appl. 6 (2007), no. {\bf 5}, 731--778.

    \bibitem[Ga62]{Ga62}
  P.~Gabriel,
  {\em Des catégories abéliennes.} Bull. Soc. Math. France {\bf 90}, 323--448 (1962)


	


\bibitem[Hum08]{Hu08}  J.~Humphreys.  {\it Representations of semisimple Lie algebras in the BGG category $\mc O$}. Graduate Studies in Mathematics, 94, American Mathematical Society, Providence, RI, 2008.

\bibitem[Joz88]{Joz88} T.~J\'ozefiak, {\em Semisimple superalgebras}, In: Algebra--Some Current Trends (Varna, 1986), pp. 96--113, Lect. Notes in Math. {\bf 1352},  Springer-Verlag, Berlin-New York, 1988.
    
	
		\bibitem[Kac77]{K77} V.~Kac. {\it Lie superalgebras}. Adv.~Math.~{\bf 16} (1977) 8--96.
		
		\bibitem[Kac78]{K78}
	V.~Kac. {\it Representations of classical Lie superalgebras. In Differential Geometrical Methods in Mathematical Physics, II (Proc. Conf., Univ. Bonn, Bonn, 1977)}, Volume 676 of Lecture Notes in Math.,
	pp. 597--626. Springer, Berlin (1978).
	



		\bibitem[Kos78]{Ko78}
	B.~Kostant.
	{\em On Whittaker vectors and representation theory}.
	Inv.~Math.~{\bf 48.2} (1978) 101--184.


 \bibitem[Los10a]{Los10a}
I.~Losev.
{\em Finite W-algebras}, Proceedings of the International Congress of Mathematicians. Vol. III: Invited lectures, Hindustan Book Agency, 2010, pp. 1281--1307.

 \bibitem[Los10b]{Los10b}
	I.~Losev.
	{\em Quantized symplectic actions and $W$-algebras}, J. Am. Math. Soc. 23 (2010), no. {\bf 1}, 35--59.
	
	 \bibitem[Los11]{Los11}
	I. Losev,
	{\em Finite-dimensional representations of $W$-algebras}, Duke
	Math. J. {\bf 159} (2011), 99--143. MR 2817650. Zbl 1235.17007.
	
	
 \bibitem[Los12]{Los12}
	I.~Losev. {\em On the structure of the category  $\mc O$  for  $W$-algebras}.
	S\'emin.~Congr., 24-II [Seminars and Congresses], Soci\'et\'e Math\'ematique de France, Paris, 2012, 353--370.



		\bibitem[Maz14]{Maz14}
	V.~Mazorchuk.
	{\em Parabolic category $\mc O$ for classical Lie superalgebras}.
	Advances in Lie Superalgebras.
	Springer International Publishing, 2014, 149--166.
	
	
		\bibitem[MSt04]{MSt04}
	V. Mazorchuk and C. Stroppel,
	{\em Translation and shuffling of projectively presentable modules and a categorification of
	a parabolic Hecke module,} Trans. Amer. Math. Soc. 357(7) (2004), 2939--2973.
	
	
	
		\bibitem[McD85]{Mc85}
	E. McDowell.	
    {\em On modules induced from Whittaker modules}. 	
    J. Algebra {\bf 96} (1985) 161--177.


		\bibitem[MS97]{MS97} D. Mili{\v{c}}i{\'c} and W. Soergel. 	
        {\em The composition series of modules induced from Whittaker modules}. 
        Comment. Math. Helv. {\bf 72}.(1997) 503--520.

	\bibitem[MS14]{MS14}	
    D. Mili{\v{c}}i{\'c}, W. Soergel. l. 
    {\em Twisted Harish-Chandra sheaves and Whittaker modules: 	The nondegenerate case.  Developments and Retrospectives in localizationLie Theory:Geometric and Analytic Methods}, 37:183--196, 2014.





		\bibitem[Mus12]{Mu12}
	I.M.~Musson.
	{\it Lie superalgebras and enveloping algebras.}
	Graduate Studies in Mathematics, 131. American Mathematical Society, Providence, RI, 2012.


	
		
		
	
	
		\bibitem[PS16]{PS16} E.~Poletaeva and V.~Serganova.
	{\em On Kostant's theorem for the Lie superalgebra $Q(n)$.} Adv.~Math.~{\bf 300} (2016), 320--359.

\bibitem[PS21]{PS16b} E.~Poletaeva and V.~Serganova.
	{\em Representations of principal W-algebra for the superalgebra $Q(n)$ and the super Yangian $YQ(1)$}. J.~Algebra {\bf 570} (2021) 140--163.


    \bibitem[Pre02]{Pr02}	A. Premet.
	{\em Special transverse slices and their enveloping algebras}, Adv.~Math.~{\bf 170}
	(2002) 1--55.
	
		\bibitem[Pre07]{Pr07}	A. Premet. 	{\em Enveloping algebras of Slodowy slices and the Joseph ideal}. J.~Eur.~Math.~Soc.~{\bf 9} (2007) 487--543.
	
 \bibitem[RS99]{RS99}
    E.~Ragoucy, P.~Sorba.
    {\em  Yangian realizations from finite $W$-algebras}, Comm. Math. Phys. {\bf 203} (1999), 551--572.
	

 \bibitem[SX20]{SX20}
	 B.~Shu and H.~Xiao. {\em Super formal Darboux-Weinstein theorem and finite  W-superalgebras}. J.~Algebra {\bf 550} (2020) 242--265.
	
  \bibitem[Skr07]{Skr}
  S. Skryabin. {\em A category equivalence}. Appendix to \cite{Pr07}.
	
	
	 \bibitem[Skr03]{Skr2}
	 S. Skryabin. {\em Representations of the Poisson algebra in prime characteristic}. Math.~Z.~{\bf 243} (2003) 563--597.








    \bibitem[Wa11]{Wa11} W.~Wang, 
    {\em Nilpotent orbits and finite $W$-algebras}, in Geometric representation theory and extended affine Lie algebras, Fields Institute Communications, vol. {\bf 59}, American Mathematical Society, 2011, pp. 71--105.
    
	\bibitem[ZS15]{ZS15}
Y. Zeng and B. Shu. 
{\em Finite $W$-superalgebras for basic Lie superalgebras}, J. Algebra, {\bf 438} (2015), 188--234.

	\bibitem[Zh14]{Zh14} L.~Zhao. {\em Finite $W$-superalgebras for queer Lie superalgebras}, J.~Pure Appl.~Algebra {\bf 218} (2014), no.~7, 1184--1194.


\bibitem[Zhu96]{Zhu96} Y.~Zhu. {\em Modular invariance of characters of vertex operator algebras}. J.~Amer.~Math.~Soc.~{\bf 9} (1996) 237--302.


\end{thebibliography}
\end{document}